\newcommand\blfootnote[1]{%
  \begingroup
  \renewcommand\thefootnote{}\footnote{#1}%
  \addtocounter{footnote}{-1}%
  \endgroup
}
\newlist{myitemize}{itemize}{3}
\setlist[myitemize]{label=\textbullet,leftmargin=0.4in}
\newcommand*\samethanks[1][\value{footnote}]{\footnotemark[#1]}
\newcommand{\cov}{\operatorname{cov}}
\newcommand{\var}{\operatorname{Var}}
\title{An optimal polynomial approximation of\\ Brownian motion}
\author{James Foster\blfootnote{M\MakeLowercase{athematical}\hspace{0.36mm} I\MakeLowercase{nstitute},\hspace{0.36mm} U\MakeLowercase{niversity\hspace{0.36mm} of}\hspace{0.36mm} O\MakeLowercase{xford},\hspace{0.36mm} W\MakeLowercase{oodstock}\hspace{0.36mm} R\MakeLowercase{oad},\hspace{0.36mm} O\MakeLowercase{xford},\hspace{0.36mm} OX2\hspace{0.36mm} 6GG,\hspace{0.36mm} UK.\\ \MakeLowercase{\hspace*{1.8em}james.foster@maths.ox.ac.uk,\hspace{0.625mm} terry.lyons@maths.ox.ac.uk,\hspace{0.625mm} harald.oberhauser@maths.ox.ac.uk.}}\thanks{Research supported by the Engineering and Physical Sciences Research Council [EP/N509711/1].}
        \and Terry Lyons\thanks{Supported\hspace{0.25mm} by\hspace{0.25mm} the\hspace{0.25mm} EPSRC\hspace{0.25mm} grant\hspace{0.25mm} DATASIG,\hspace{0.25mm} Alan\hspace{0.25mm} Turing\hspace{0.25mm} Institute\hspace{0.25mm} and\hspace{0.25mm} Oxford-Man\hspace{0.25mm} Institute.}
        \and Harald Oberhauser\samethanks}
\begin{document}
\maketitle

\begin{abstract}
In this paper, we will present a strong (or pathwise) approximation of standard
Brownian motion by a class of orthogonal polynomials. The coefficients that are obtained from the
expansion of Brownian motion in this polynomial basis are independent Gaussian random variables.
Therefore it is practical (requires $N$ independent Gaussian coefficients) to generate an approximate
sample path of Brownian motion that respects integration of polynomials with degree less than $N$.
Moreover, since these orthogonal polynomials appear naturally as eigenfunctions of the Brownian
bridge covariance function, the proposed approximation is optimal in a certain weighted $L^{2}(\mathbb{P})$ sense. \\
In addition, discretizing Brownian paths as piecewise parabolas gives a locally higher order numerical
method for stochastic differential equations (SDEs) when compared to the piecewise linear approach. 
We shall demonstrate these ideas by simulating Inhomogeneous Geometric Brownian Motion (IGBM).
This numerical example will also illustrate the deficiencies of the piecewise parabola approximation
when compared to a new version of the asymptotically efficient log-ODE (or Castell-Gaines) method.
\end{abstract}

\begin{keywords}
Brownian motion, polynomial approximation, numerical methods for SDEs
\end{keywords}

\begin{AMS}
41A10, 60J65, 60L90, 65C30
\end{AMS}

\pagestyle{myheadings}
\thispagestyle{plain}
\markboth{J. FOSTER, T. LYONS AND H. OBERHAUSER}{POLYNOMIAL APPROXIMATION OF BROWNIAN MOTION}

\section{Introduction}

Brownian motion is a central object for modelling real-world
systems that evolve under the influence of random perturbations \cite{Brownianuseful}. In applications
where methods discretize Brownian motion, usually only increments of the path are
generated \cite{Higman}. In this setting, the best $L^{2}(\mathbb{P})$ approximation of Brownian motion that
is measurable with respect to these increments is given by the piecewise linear path
that agrees on discretization points \cite{CameronClark}. This motivates the following natural question:\vspace{1.5mm}
\textit{Are there better discrete approximations of Brownian motion than piecewise linear?}\vspace{1.5mm}
The next simplest approximant would be a piecewise polynomial, though it is not clear
whether this would be advantageous for tackling problems such as SDE simulation.
This paper can be viewed as a logical continuation of \cite{Multiscalebm}, where a polynomial wavelet
representation of Brownian motion was proposed. These wavelets were constructed to
capture certain ``geometrical features'' of the path, namely the integrals of the
Brownian motion against monomials. We shall investigate the practical applications
of these polynomials and their geometrical features in the numerical analysis of SDEs.\\ \vspace{-4mm}
\begin{figure}[h]\label{polynomialdiagams}
\centering
\includegraphics[width=0.975\textwidth]{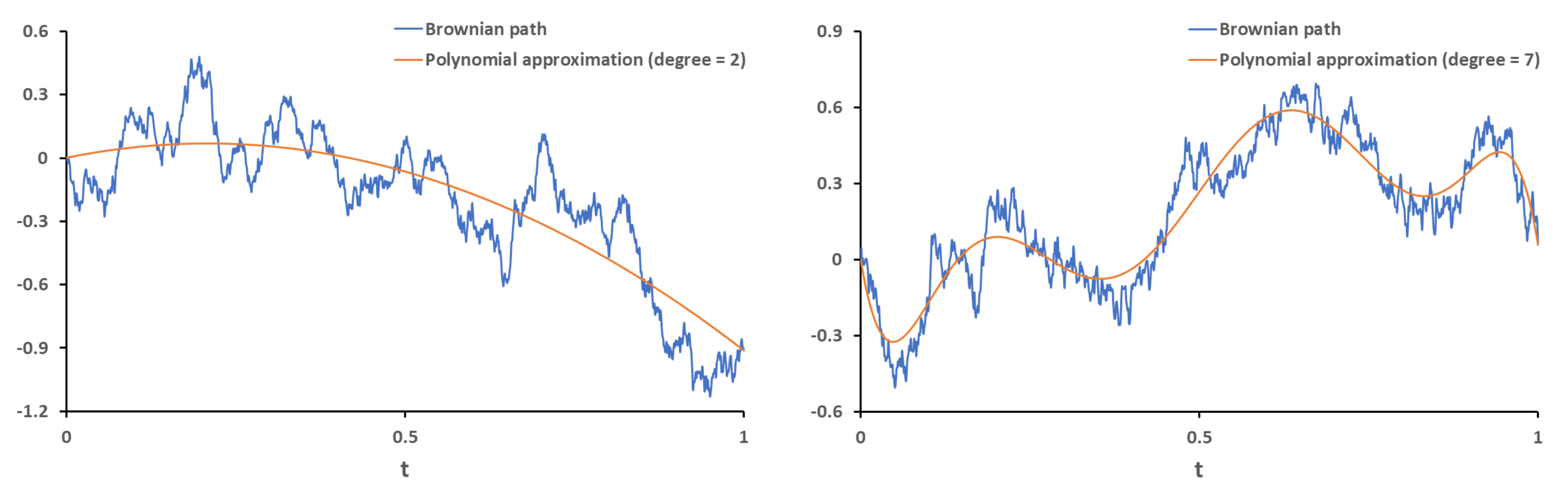}\vspace{-2mm}
\caption{Sample paths of Brownian motion with corresponding polynomial approximations.}\vspace{-3.mm}
\end{figure}\newpage\noindent
The paper is organised as follows. In Section 2, we shall state and prove the main
result of the paper (Theorem \ref{waveletthm}). This will be a Karhunen-Lo\`{e}ve theorem for the Brownian bridge,
where the orthogonal functions used in the approximation are polynomials.
Furthermore, we shall explicitly show that each basis function is proportional to a shifted $(\alpha, \beta)$-Jacobi polynomial but with the nonstandard exponents $\alpha = \beta = -1$.
This enables us to construct these orthogonal polynomials using recurrence relations,
or as the difference of two shifted Legendre polynomials whose degrees differ by two.
The resulting polynomial expansion of Brownian motion was independently discovered
by Habermann in \cite{Habermann}, where a sharp $L^{2}\hspace{-0.075mm}(\mathbb{P})$ convergence rate of $O\big(\frac{1}{\sqrt{n}}\big)$ is established\footnote{\hspace*{-0.25mm}A Matlab demonstration can be found at \href{https://www.chebfun.org/examples/stats/RandomPolynomials.html}{chebfun.org/examples/stats/RandomPolynomials.html}}.\vspace{0.5mm}
\noindent
In\hspace{0.35mm} Section\hspace{0.35mm} 3,\hspace{0.35mm} we\hspace{0.35mm} shall\hspace{0.35mm} investigate\hspace{0.35mm} some\hspace{0.35mm} significant\hspace{0.35mm} consequences\hspace{0.35mm} of\hspace{0.35mm} the\hspace{0.35mm} main\hspace{0.35mm} theorem.\vspace{0.75mm}
\begin{theorem}
Let $W$ denote a standard real-valued Brownian motion on $[\hspace{0.1mm}0,1]$.
Let $W^{n}$ be the unique $n$-th degree random polynomial with a root at $0$ and satisfying
\begin{align}
\int_{0}^{1}u^{k}\,dW_{u}^{n} & = \int_{0}^{1}u^{k}\,dW_{u}\hspace{0.125mm},\hspace{3mm}\text{for}\hspace{2mm}k = 0\hspace{0.25mm}, 1\hspace{0.25mm}, \cdots\hspace{0.25mm}, n - 1\hspace{0.25mm}.
\end{align}
Then\hspace{0.25mm} $W =\hspace{0.25mm} W^{n} + Z^{n}$\hspace{0.25mm}, where\hspace{0.25mm} $Z^{n}$ is a centered Gaussian process independent of\hspace{0.5mm} $W^{n}$.
\end{theorem}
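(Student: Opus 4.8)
The plan is to deduce everything from the Karhunen--Lo\`{e}ve expansion of Theorem~\ref{waveletthm}. Applied to the Brownian bridge $W_t - t\hspace{0.1mm}W_1$ and combined with the endpoint term $t\hspace{0.1mm}W_1$ (which is independent of the bridge), that result furnishes a representation $W_t = \sum_{k=0}^{\infty}\xi_k\,\phi_k(t)$ in which the coefficients $\xi_k$ form an independent family of centred Gaussians and each $\phi_k$ is a polynomial of degree $k+1$ with $\phi_k(0)=0$. The single structural fact I will exploit is the one highlighted in the excerpt: since $\phi_k$ is proportional to a shifted Jacobi $(-1,-1)$ polynomial, its derivative $q_k := \phi_k'$ is proportional to the degree-$k$ shifted Legendre polynomial, and hence $\int_0^1 u^j q_k(u)\,du = 0$ for all $0\le j<k$. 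Granting this, my strategy is to identify the unknown $W^n$ with the truncated series $S^n := \sum_{k=0}^{n-1}\xi_k\phi_k$, after which the entire conclusion becomes structurally transparent.

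First I would verify that $S^n$ is an admissible competitor for $W^n$: it is a polynomial of degree at most $n$ with a root at $0$, and for each $j\in\{0,\dots,n-1\}$ the orthogonality relations give
$$\int_{0}^{1} u^{j}\,dS^{n}_{u} = \sum_{k=0}^{n-1}\xi_{k}\int_{0}^{1} u^{j}q_{k}(u)\,du = \sum_{k=0}^{\infty}\xi_{k}\int_{0}^{1}u^{j}q_{k}(u)\,du = \int_{0}^{1}u^{j}\,dW_{u},$$
because every term with $k>j$ vanishes on both sides (the interchange of sum and integral in the third expression being justified by the $L^2$-convergence supplied by Theorem~\ref{waveletthm}). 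Next I would settle uniqueness: if two admissible polynomials both satisfy $(1)$, their difference $R$ has degree at most $n$, satisfies $R(0)=0$, and obeys $\int_0^1 u^j R'(u)\,du = 0$ for $j<n$; expanding $R' = \sum_{k<n}a_k q_k$ and noting that the matrix $\big(\int_0^1 u^j q_k(u)\,du\big)_{0\le j,k\le n-1}$ is lower triangular with non-vanishing diagonal forces all $a_k=0$, hence $R'\equiv 0$ and $R\equiv 0$. Therefore $W^n = S^n$, and by subtraction $Z^n = W - W^n = \sum_{k\ge n}\xi_k\phi_k$.

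With this identification the theorem follows at once. The process $W^n$ is a measurable function of the finite block $(\xi_0,\dots,\xi_{n-1})$, whereas $Z^n$ is assembled entirely from the complementary block $(\xi_k)_{k\ge n}$; since the whole family $(\xi_k)_{k\ge 0}$ is independent, the processes $W^n$ and $Z^n$ are independent. Moreover $Z^n$ is an $L^2(\mathbb{P})$-convergent sum of independent centred Gaussians, so it is itself a centred Gaussian process, which is the assertion.

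I expect the only genuinely substantive step to be the identification $W^n = S^n$ — that is, the orthogonality bookkeeping in the displayed computation together with the triangularity argument underlying uniqueness. An alternative route would be to observe that $(W^n, Z^n)$ is jointly Gaussian, which is clear since $W^n$ is a linear functional of $W$, and then to verify the vanishing of the cross-covariance $\cov(W^n_s, Z^n_t)$ directly; but channelling the proof through the truncated expansion makes the independence fall out of the disjointness of index blocks and avoids any covariance computation.
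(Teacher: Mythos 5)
Your proof is correct and follows essentially the same route as the paper: both identify $W^{n}$ with the truncated Karhunen--Lo\`{e}ve expansion $W_{1}e_{0}+\sum_{k=1}^{n-1}I_{k}e_{k}$ and read off independence of the tail $Z^{n}=\sum_{k\geq n}I_{k}e_{k}$ from the independence of the Gaussian coefficients. The only (minor) difference is that you verify the moment conditions and uniqueness directly from the Legendre orthogonality of the derivatives $e_{k}'$, whereas the paper converts the conditions into iterated time integrals by integration by parts and invokes its earlier conditional-expectation characterization of $W^{n}$.
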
\smallbreak
\noindent The above theorem has a simple yet striking conclusion, namely that polynomials can
be unbiased approximants of Brownian motion. In addition, the first non-trivial case
($n=2$) already has interesting applications within the numerical analysis of SDEs.
One reason is that parabolas can capture the ``\hspace{0.25mm}space-time area'' of Brownian motion.\vspace{-7.5mm}
\begin{figure}[h]\label{paraboladiagram}
\centering
\includegraphics[width=0.975\textwidth]{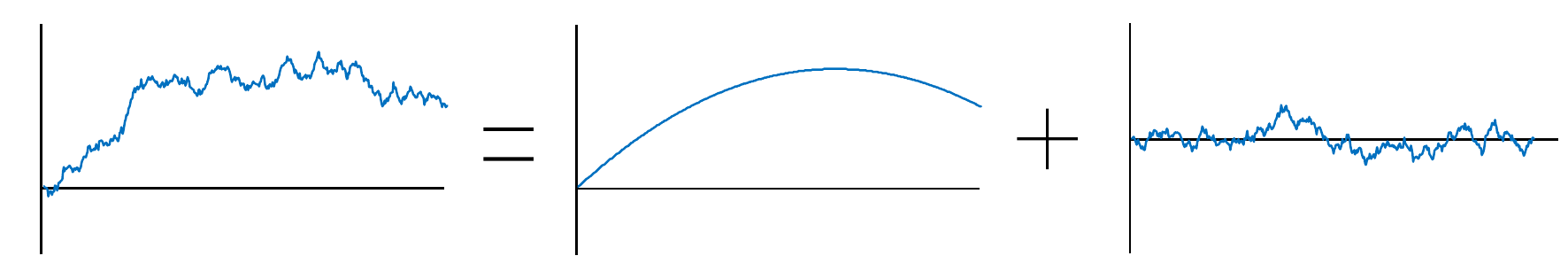}\vspace{-5mm}
\caption{Brownian motion can be expressed as a (random) parabola plus independent noise.
Moreover, the approximating parabola has the same increment and time integral as the original path.
}
\end{figure}\vspace{-4mm}\smallbreak\noindent
Therefore discretizing Brownian motion using a piecewise parabola gives a locally high
order methodology for numerically solving one-dimensional SDEs. However, since
certain triple iterated integrals of Brownian motion and time are partially matched by
these parabolas, we expect this method to have only an $O(h)$ rate of convergence
(where $h$ denotes the step size used). This gives motivation for the following theorem:\smallbreak
\begin{theorem}\label{introintegral}
Let\hspace{0.25mm} $\wideparen{W}$ be the (unique) quadratic polynomial with a root at $0$\hspace{0.25mm} and\vspace{-0.75mm}
\begin{align*}
\wideparen{W}_{1} = W_{1}\hspace{0.25mm}, \hspace{5mm}\int_{0}^{1}\wideparen{W}_{u}\, du = \int_{0}^{1}W_{u}\,du\hspace{0.25mm}.
\end{align*}
Then the following third order iterated integral of Brownian motion can be estimated:\vspace{-0.75mm}
\begin{align}\label{introintegralestimate}
\mathbb{E}\left[\hspace{0.25mm}\int_{0}^{1}W_{u}^{2}\,du\,\Big|\,W_{1}\hspace{0.25mm}, \int_{0}^{1}W_{u}\,du\hspace{0.25mm}\right]
= \int_{0}^{1}\wideparen{W}_{u}^{2}\,du + \frac{1}{15}\hspace{0.25mm}.
\end{align}
\end{theorem}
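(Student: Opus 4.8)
The plan is to leverage the decomposition $W = W^{n} + Z^{n}$ established above, specialised to $n=2$. After one integration by parts, the two conditions defining $\wideparen{W}$ (matching $W_{1}$ and $\int_{0}^{1}W_{u}\,du$) are exactly the two conditions $\int_{0}^{1}u^{k}\,d\wideparen{W}_{u} = \int_{0}^{1}u^{k}\,dW_{u}$ for $k=0,1$, so $\wideparen{W} = W^{2}$ and hence $W = \wideparen{W} + Z$ with $Z := Z^{2}$ a centered Gaussian process independent of $\wideparen{W}$. Moreover the linear map $(W_{1},\int_{0}^{1}W_{u}\,du)\mapsto\wideparen{W}$ is a bijection onto the root-at-$0$ parabolas, so conditioning on the pair $(W_{1},\int_{0}^{1}W_{u}\,du)$ is identical to conditioning on $\wideparen{W}$; I shall use the latter.

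First I would expand $W_{u}^{2} = \wideparen{W}_{u}^{2} + 2\wideparen{W}_{u}Z_{u} + Z_{u}^{2}$, integrate over $[0,1]$, and take the conditional expectation given $\wideparen{W}$. The term $\int_{0}^{1}\wideparen{W}_{u}^{2}\,du$ is $\wideparen{W}$-measurable and survives unchanged; the cross term has conditional expectation $2\int_{0}^{1}\wideparen{W}_{u}\,\mathbb{E}[Z_{u}]\,du = 0$ since $Z$ is centered and independent of $\wideparen{W}$; and $\int_{0}^{1}Z_{u}^{2}\,du$ has conditional expectation $\int_{0}^{1}\var(Z_{u})\,du$ by the same independence. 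This reduces the identity to the scalar claim $\int_{0}^{1}\var(Z_{u})\,du = \tfrac{1}{15}$.

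To evaluate this constant I would take \emph{unconditional} expectations of the same expansion; independence again annihilates the cross term, giving $\int_{0}^{1}\var(Z_{u})\,du = \mathbb{E}\int_{0}^{1}W_{u}^{2}\,du - \mathbb{E}\int_{0}^{1}\wideparen{W}_{u}^{2}\,du$. The first expectation equals $\int_{0}^{1}u\,du = \tfrac12$. For the second I would write the parabola explicitly as $\wideparen{W}_{u} = (3u^{2}-2u)W_{1} + (6u-6u^{2})\int_{0}^{1}W_{s}\,ds$, expand $\int_{0}^{1}\wideparen{W}_{u}^{2}\,du$ as a quadratic form in the two Gaussians $W_{1}$ and $\int_{0}^{1}W_{s}\,ds$, and evaluate it using the three second moments $\var(W_{1}) = 1$, $\var\big(\int_{0}^{1}W_{s}\,ds\big) = \tfrac13$ and $\cov\big(W_{1},\int_{0}^{1}W_{s}\,ds\big) = \tfrac12$. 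This gives $\tfrac{13}{30}$, whence $\tfrac12 - \tfrac{13}{30} = \tfrac{1}{15}$.

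The conceptual core — and the only step requiring more than bookkeeping — is the independence of $Z$ from $\wideparen{W}$, which is inherited directly from the decomposition theorem; once the conditional expectation collapses, everything reduces to elementary moment arithmetic. The two things to get right are the identification $\wideparen{W} = W^{2}$ (so that independence is available) and the covariances of $W_{1}$ and $\int_{0}^{1}W_{s}\,ds$, since any slip there feeds straight into the final constant $\tfrac{1}{15}$.
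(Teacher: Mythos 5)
Your proposal is correct and follows essentially the same route as the paper: write $W = \wideparen{W} + Z$ with the arch $Z$ centered and independent of $(W_{1}, \int_{0}^{1}W_{u}\,du)$, expand the square so the cross term vanishes, and reduce everything to the constant $\int_{0}^{1}\var(Z_{u})\,du = \tfrac{1}{15}$. The only (cosmetic) difference is that the paper evaluates this constant by integrating the explicit arch variance $u - u^{2} - 3u^{2}(1-u)^{2}$ obtained from the Mercer expansion, whereas you recover it as $\mathbb{E}\int_{0}^{1}W_{u}^{2}\,du - \mathbb{E}\int_{0}^{1}\wideparen{W}_{u}^{2}\,du = \tfrac12 - \tfrac{13}{30}$; both computations are sound.
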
\vspace{-3mm}\smallbreak
\noindent The above theorem can be directly incorporated into the stochastic Taylor
method as well as the log-ODE or Castell-Gaines method (see \cite{CastellGaines}, \cite{StocLie}). We will show that by
estimating this non-trivial iterated integral with its conditional expectation, we can
design numerical methods that enjoy high orders of both strong and weak convergence.
Specifically, for a general SDE that is driven by a one-dimensional Brownian motion
and governed by sufficiently regular vector fields (smooth with bounded derivatives),
the numerical methods that correctly utilize the above conditional expectation will
have a strong convergence rate of $O(h^{\frac{3}{2}})$ as well as a weak convergence rate of $O(h^{2})$.
\newpage\noindent
These high orders of convergence can also be achieved in the multidimensional setting
provided the vector fields governing the SDE satisfy certain commutativity conditions.
For example, this estimator has applications for simulating SDEs with additive noise:
\begin{align*}
dy_{t} & = f(y_{t})\,dt + \sigma\hspace{0.25mm}dW_{t}\hspace{0.25mm},
\end{align*}
where $f$ is a smooth vector field on $\mathbb{R}^{d}$, $\sigma > 0$ is constant and $W$ is now $d$-dimensional.
By considering Theorem \ref{introintegral}, we expect that $y_{t}$ is well approximated (for small $t$) by
\begin{align*}
\wideparen{y}_{t} + \frac{1}{30}\hspace{0.25mm}t^{2}\sigma^{2}\Delta f(y_0)\hspace{0.25mm},
\end{align*}
where $\wideparen{y}$ denotes the solution of the below ODE driven by a ``\hspace{0.25mm}Brownian parabola'' $\wideparen{W}$,
\begin{align*}
d\hspace{0.25mm}\wideparen{y}_{t} & = f(\hspace{0.25mm}\wideparen{y}_{t})\,dt + \sigma\hspace{0.25mm}d\wideparen{W}_{t}\hspace{0.25mm},\\[3pt]
\wideparen{y}_{0} & = y_{0}\hspace{0.25mm}.
\end{align*}
This parabola-driven ODE can then be discretized using a three-stage Runge-Kutta
method and the resulting SDE approximation shall be investigated in a future work.\medbreak
\noindent
Since these methods are based on the conditional expectation given by Theorem \ref{introintegral},
they are designed to minimize the leading error term within local Taylor expansions.
This sense of optimality is conceptually similar to that of the asymptotically efficient
SDE approximations developed by Clark \cite{Clark}, Newton \cite{Newton1, Newton2} and Castell \& Gaines \cite{CastellGaines}.
The key difference is that we are employing additional integral information about $W$.
Hence, this line of research could provide a further insight into the approximation of It\^{o} integrals using linear path information, where there already are a number of results concerning the computational complexity of methods (see \cite{Xiao}, \cite{LinearApprox} and \cite{Dickinson}).\\
Most notably, Tang and Xiao \cite{Xiao} consider the same triple iterated integral as in (\ref{introintegralestimate}) and present an asymptotically optimal approximation that performs well when a limited number of random variables are used (see Table 2 for these numerical results).
Whilst there are other senses of optimality (such as those discussed in \cite{Lee} and \cite{Spline})
that could be used when analysing the proposed approximations of Brownian motion and SDE solutions, we shall estimate errors in an $L^{2}(\mathbb{P})$ sense throughout the paper.
In particular, we can apply the main result to quantify the error of the new estimator.\medbreak
\begin{theorem}\label{introintegralvar}
Using the same notation as before, we have the following variance:
\begin{align*}
\var\left(\int_{0}^{1}W_{u}^{2}\,du\,\Big|\,W_{1}\hspace{0.25mm}, \int_{0}^{1}W_{u}\,du\right)
= \frac{11}{6300} + \frac{1}{180}\hspace{0.25mm}W_{1}^{2} + \frac{1}{175}\left(\hspace{0.25mm}\int_{0}^{1}W_{u}\,du - \frac{1}{2}\hspace{0.25mm}W_{1}\right)^{2}.
\end{align*}
\end{theorem}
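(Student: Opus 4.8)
The plan is to build on the decomposition $W = \wideparen{W} + R$ that the first theorem supplies in the case $n=2$. Indeed $\wideparen{W}$ matches both $W_1$ and $\int_0^1 u\,dW_u = W_1 - \int_0^1 W_u\,du$, so it coincides with the degree-two approximant, and $R := W - \wideparen{W}$ is a centred Gaussian process independent of $\wideparen{W}$, hence independent of $\mathcal{F} := \sigma\big(W_1,\, \int_0^1 W_u\,du\big)$. Writing $J := \int_0^1 W_u\,du - \tfrac12 W_1$, I would first record the two explicit ingredients: the conditional mean $\wideparen{W}_u = \mathbb{E}[W_u \mid \mathcal{F}] = W_1\,u + 6J\,u(1-u)$ (obtained by solving the two matching constraints, or by Gaussian linear regression of $W_u$ onto $W_1$ and $\int_0^1 W_s\,ds$), and the residual covariance $K(u,v) := \mathbb{E}[R_u R_v] = \min(u,v) - 4uv + 3uv(u+v) - 3u^2v^2$, namely $\min(u,v)$ minus its projection onto $\operatorname{span}\{u,u^2\}$. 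As a consistency check one verifies $\int_0^1 K(u,v)\,dv \equiv 0$, reflecting that $R$ has vanishing increment and time integral, and $\int_0^1 K(u,u)\,du = \tfrac1{15}$, recovering the $+\tfrac1{15}$ of the previous theorem.

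Next I would expand $\int_0^1 W_u^2\,du = \int_0^1 \wideparen{W}_u^2\,du + 2\int_0^1 \wideparen{W}_u R_u\,du + \int_0^1 R_u^2\,du$. The first summand is $\mathcal{F}$-measurable and so contributes nothing to the conditional variance; abbreviating $A := 2\int_0^1 \wideparen{W}_u R_u\,du$ and $B := \int_0^1 R_u^2\,du$, the quantity to compute is $\var(A\mid\mathcal{F}) + 2\cov(A,B\mid\mathcal{F}) + \var(B\mid\mathcal{F})$. Because $R$ is independent of $\mathcal{F}$ while $\wideparen{W}$ is $\mathcal{F}$-measurable, each term may be evaluated by treating $\wideparen{W}$ as a fixed function and averaging over $R$ alone. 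The cross term vanishes by Gaussian parity: $A$ is odd and $B$ even in $R$, so, since $\mathbb{E}[A\mid\mathcal{F}]=0$, we get $\cov(A,B\mid\mathcal{F}) = \mathbb{E}[AB\mid\mathcal{F}] = 2\iint \wideparen{W}_u\,\mathbb{E}[R_u R_v^2]\,du\,dv = 0$.

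For the linear part, $A$ is (conditionally) a centred Gaussian, whence $\var(A\mid\mathcal{F}) = 4\iint \wideparen{W}_u \wideparen{W}_v\, K(u,v)\,du\,dv$. Substituting $\wideparen{W}_u = W_1\,f(u) + J\,g(u)$ with $f(u) = u$ and $g(u) = 6u(1-u)$ turns this into the quadratic form $4\big(W_1^2\langle f,f\rangle + 2W_1 J\,\langle f,g\rangle + J^2\langle g,g\rangle\big)$, where $\langle p,q\rangle := \iint p(u)q(v)K(u,v)\,du\,dv$. The mixed term drops out because $\langle f,g\rangle = 0$ (this is exactly why no $W_1\int_0^1 W_u\,du$ term survives), and evaluating the two remaining integrals gives $\langle f,f\rangle = \tfrac1{720}$ and $\langle g,g\rangle = \tfrac1{700}$, i.e.\ coefficients $\tfrac1{180}$ and $\tfrac1{175}$. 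For the quadratic part, Wick's theorem ($\mathbb{E}[R_u^2 R_v^2] = \mathbb{E}[R_u^2]\mathbb{E}[R_v^2] + 2K(u,v)^2$) yields the constant $\var(B\mid\mathcal{F}) = 2\iint K(u,v)^2\,du\,dv = \tfrac{11}{6300}$. Summing the three contributions and recalling $J = \int_0^1 W_u\,du - \tfrac12 W_1$ gives the stated identity.

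The only real difficulty is bookkeeping: the kernel integrals — above all $\iint K^2$ — must be carried out with the piecewise term $\min(u,v)$ handled by splitting $[0,1]^2$ along the diagonal. A convenient way to organise $\iint K^2$ is to write $K = \min - P$, with $P$ the (rank-two) projection kernel, and expand $\iint K^2 = \iint \min^2 - 2\iint \min\,P + \iint P^2$, each piece reducing to a trace against the Gram matrices of $\{f,g\}$; one finds $\iint\min^2 = \tfrac16$, $\iint\min\,P = \tfrac{23}{140}$ and $\iint P^2 = \tfrac{293}{1800}$, combining to $\tfrac{11}{12600}$. The one point worth isolating conceptually is the vanishing of $\langle f,g\rangle$, which forces the absence of a cross term and is equivalent to the orthogonality already built into the polynomial expansion of the first theorem.
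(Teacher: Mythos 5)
Your proposal is correct and follows essentially the same route as the paper's proof (given for Theorem \ref{secondcondthm}): decompose $W=\wideparen{W}+Z$ with $Z$ independent of $\big(W_{1},H_{1}\big)$, note the cross term $\cov\big(\int\wideparen{W}Z,\int Z^{2}\big)$ vanishes by the symmetry of the centred Gaussian $Z$, evaluate $4\var\big(\int\wideparen{W}_{u}Z_{u}\,du\big)$ as a quadratic form in $(W_{1},H_{1})$ against the arch covariance $K_{Z}(u,v)=\min(u,v)-uv-3uv(1-u)(1-v)$, and obtain the constant $\tfrac{11}{6300}$ from $2\iint K_{Z}^{2}$ via Wick's theorem. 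Your kernel agrees with the paper's $K_{Z}$, and your hand-computed values $\tfrac{1}{720}$, $\tfrac{1}{700}$, the vanishing of $\langle f,g\rangle$, and $\iint K_{Z}^{2}=\tfrac{11}{12600}$ all check out (the paper delegates these integrals to Mathematica).
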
\bigbreak
\noindent
In Section 4, we demonstrate the applicability of these ideas to SDE simulation through various discretizations of Inhomogeneous Geometric Brownian Motion (IGBM)
\begin{align*}
dy_{t} = a(b-y_{t})\,dt+\sigma\hspace{0.125mm}y_{t}\,dW_{t}\hspace{0.25mm},
\end{align*}
where $a \geq 0$ and $b\in \mathbb{R}$ are the mean reversion parameters and $\sigma \geq 0$ is the volatility.\medbreak\noindent
In mathematical finance, IGBM is an example of a short rate model that can be both
mean-reverting and non-negative. It is therefore suitable for modelling interest
rates, stochastic volatilities and default intensities \cite{IGBMapproximation}. From a mathematical viewpoint, IGBM is
one of the simplest SDEs that has no known method of exact simulation \cite{IGBMapplication}.
By incorporating the ideas provided by the main theorem into the log-ODE method,
we will produce a state-of-the-art numerical approximation of IGBM. Although the vector
fields for IGBM are not bounded, our numerical evidence indicates that the
method has a strong convergence rate of $O(h^{\frac{3}{2}})$ and a weak convergence rate of $O(h^{2})$.

\subsection{Notation}
Below is some of the notation that is used throughout the paper.\vspace{-0.75mm}
\begin{center}
\renewcommand{\arraystretch}{1.1}
\begin{longtable}{cp{9.6cm}l}
\toprule
Symbol & Meaning & Page \vspace{0.5mm}\\
\toprule
$W$ & a standard real-valued Brownian motion. & \hspace{2mm} 2\vspace{0.5mm}\\
$B$ & a standard real-valued Brownian bridge on $[\hspace{0.1mm}0\hspace{0.05mm},1\hspace{0.1mm}]$. & \hspace{2mm} 5\vspace{1.5mm}\\
$\mu$ & a Borel measure on $[\hspace{0.1mm}0,1]$ defined by a singular weight function. & \hspace{2mm} 5\vspace{1.5mm}\\
&\begin{math}\begin{aligned}
\hspace{30mm}\mu(a,b) = \int_{a}^{b}\frac{1}{x(1-x)}\,dx,
\end{aligned}\end{math} & \vspace{1.5mm} \\
& for all open intervals $(a,b)\subset [\hspace{0.1mm}0,1]$. &\vspace{1.5mm}\\
$\left\{e_{k}\right\}_{k\geq 1}$ & a family of Jacobi-like polynomials with $\deg\left(e_{k}\right) = k + 1$ that
are orthogonal with respect to weight function $w(x) :=\hspace{0.25mm} \frac{1}{x(1-x)}\hspace{0.25mm}.$&  \hspace{2mm} 5 \vspace{1mm} \\
$I_{k}$ & a time integral of $B$ times the polynomial $e_{k}(t)\,w(t)$ over $[\hspace{0.1mm}0\hspace{0.05mm},1\hspace{0.1mm}]$, & \hspace{2mm} 5 \vspace{1.5mm}\\
&\begin{math}\begin{aligned}
\hspace{30mm}I_{k} = \int_{0}^{1}B_{t}\cdot\frac{e_{k}\left(t\right)}{t(1-t)}\,dt.
\end{aligned}\end{math} & \vspace{1.5mm} \\
$K_B$ & the covariance function of $B$, that is $K_B(s,t) = \min(s,t) - st$. & \hspace{2mm} 5\vspace{1.5mm}\\
\hspace{2mm}$P_{k}^{(\alpha, \beta)}$ & the $k$-th order $(\alpha, \beta)$-Jacobi polynomial on $[-1,1\hspace{0.1mm}]$ $\left(\alpha, \beta > -1\right)$.  & \hspace{1.25mm} 11\vspace{0.5mm}\\
\hspace{2mm}$Q_{k}$ & the $k$-th order Legendre polynomial on $[-1,1\hspace{0.1mm}]$, i.e. $Q_{k} = P_{k}^{(0, 0)}$. & \hspace{1.25mm} 13\vspace{0.5mm}\\
$y$ & a solution of the Stratonovich SDE on the finite interval $[\hspace{0.1mm}0\hspace{0.05mm},T\hspace{0.1mm}]$, & \hspace{1.25mm} 14 \vspace{1.5mm}\\
&\begin{math}\begin{aligned}
\hspace{26mm} dy_{t} & = f_{0}(y_{t})\,dt + f_{1}(y_{t})\circ dW_{t}\hspace{0.25mm},\\[3pt]
y_{0} & = \xi,
\end{aligned}\end{math} & \vspace{1.5mm} \\ 
& where $y, \xi\in\mathbb{R}^{e}$, and $f_{i}:\mathbb{R}^{e}\rightarrow\mathbb{R}^{e}$ denote smooth vector fields. & \\
& (It\^{o} SDEs will be defined on fixed intervals with the same form) & \vspace{0.5mm}\\
$[s,t]$ & a general closed subinterval of $[\hspace{0.15mm}0\hspace{0.05mm},T\hspace{0.1mm}]$, usually considered small. & \hspace{1.25mm} 14\vspace{0.5mm} \\
$h$ & the step size that a numerical method uses, typically $h = t- s$. & \hspace{1.25mm} 14\vspace{0.25mm}\\
$W_{s,t}$ & the increment of Brownian motion over $[\hspace{0.1mm}s\hspace{0.05mm},t\hspace{0.1mm}]$, $W_{s,t} := W_{t} - W_{s}\hspace{0.05mm}$. &  \hspace{1.25mm} 14 \vspace{0.5mm} \\
$\wideparen{W}$ & the Brownian parabola corresponding to $W$\hspace{-0.5mm} over some interval. &  \hspace{1.25mm} 15  \vspace{1.5mm} \\
&\begin{math}\begin{aligned}
\hspace{3.5mm}\wideparen{W}_{u} = W_{s} + \frac{u-s}{h}\,W_{s,t} + \frac{6\hspace{0.25mm}(u-s)(t-u)}{h^{2}}\,H_{s,t}\hspace{0.25mm},\hspace{2.9mm}\forall u\in [\hspace{0.1mm}s\hspace{0.05mm},t\hspace{0.1mm}].
\end{aligned}\end{math}& \vspace{1.5mm}\\
$Z$ & the Brownian arch corresponding to $W$ defined as $Z := W - \wideparen{W}$. & \hspace{1.25mm} 15 \vspace{0.5mm}\\
$H_{s,t}$ & the rescaled space-time L\'{e}vy area of Brownian motion on $[\hspace{0.1mm}s\hspace{0.05mm},t\hspace{0.1mm}]$, &  \hspace{1.25mm} 15 \vspace{1.5mm} \\
&\begin{math}\begin{aligned}
\hspace{22.5mm}H_{s,t} & = \frac{1}{h}\int_{s}^{t}W_{s,u} -\frac{u-s}{h}\,W_{s,t}\,du.
\end{aligned}\end{math} & \vspace{1.5mm} \\
$L_{s,t}$ & the space-space-time L\'{e}vy area of Brownian motion over $[\hspace{0.1mm}s\hspace{0.05mm},t\hspace{0.1mm}]$, & \hspace{1.45mm} 17 \vspace{1.5mm}\\
&\begin{math}\begin{aligned}
L_{s,t} & = 
\frac{1}{6}\hspace{-0.3mm}\left(\int_{s}^{t}\hspace{-1mm}\int_{s}^{u}\hspace{-1.5mm}\int_{s}^{v}\hspace{-1mm}\circ\,dW_{r}\hspace{-0.25mm}\circ dW_{v}\, du 
 - 2\hspace{-0.3mm}\int_{s}^{t}\hspace{-1mm}\int_{s}^{u}\hspace{-1.5mm}\int_{s}^{v}\hspace{-1mm}\circ\,dW_{r}\, dv\circ dW_{u}\right. \\
&\hspace{10mm}\left. + \int_{s}^{t}\hspace{-1mm}\int_{s}^{u}\hspace{-1.5mm}\int_{s}^{v}\hspace{-1mm}\,dr \circ dW_{v}\hspace{-0.25mm}\circ dW_{u}\right)\hspace{-0.5mm},
\end{aligned}\end{math}\vspace{1.5mm}\\
$Y$ & an approximation for the true solution $y$ of a Stratonovich SDE. & \hspace{1.25mm} 19 \vspace{0.5mm}\\
$[\,\cdot\hspace{0.5mm}, \cdot\,]$ & the standard Lie bracket of  vector fields, $[f_{0}, f_{1}] = f_{1}^{\prime}\hspace{0.125mm}f_{0} - f_{0}^{\prime}\hspace{0.125mm}f_{1}$. & \hspace{1.25mm} 19
\end{longtable}
\end{center}\vspace{-5.75mm}

\section{Main result}

It was shown in \cite{Multiscalebm} that Brownian motion can be generated
using Alpert-Rokhlin multiwavelets (see \cite{WaveletTheory}). The mother functions that generate
this wavelet basis are supported on $[\hspace{0.1mm}0,1]$ and are defined using polynomials as follows:\medbreak
\begin{definition}[Alpert-Rokhlin wavelets]\label{ARWavelet}
For $q\geq 1$, define the $q$ functions
$\phi^{q,1}, \cdots, \phi^{q,q} : [\hspace{0.15mm}0,1\hspace{0.15mm}]\hspace{-0.3mm}\rightarrow \mathbb{R}$ as piecewise polynomials of degree $q-1$ with pieces on
$[\hspace{0.15mm}0,\frac{1}{2}\hspace{0.15mm}]$, $[\hspace{0.15mm}\frac{1}{2}, 1\hspace{0.15mm}]$ that satisfy the following conditions for all $p\in\left\{1,\,\cdots\,, q\hspace{0.15mm}\right\}$ and $t\in [\hspace{0.15mm}0, \frac{1}{2}\hspace{0.15mm})$\hspace{0.25mm}$:$
\begin{align}
\phi^{q,p}(t) & = (-1)^{q+p-1}\phi^{q,p}(1 - t),\label{momentcondition1}\\[2pt]
\int_{0}^{1}\phi^{q,p}(t)\phi^{q,r}(t)\,dt & = \delta_{qr}\hspace{0.25mm}, \hspace{3mm}\text{for}\hspace{2mm}1 \leq r \leq q, \label{momentcondition2}\\
\int_{0}^{1}t^{k}\phi^{q,p}(t)\,dt & = 0, \hspace{5.65mm}\text{for}\hspace{2.2mm}0 \leq k \leq q - 1. \label{momentcondition3}
\end{align}
The Alpert-Rokhlin multiwavelets of order $q$ can now be generated by translating and scaling the mother functions $\phi^{q,p}$.
\begin{align*}
\phi_{nk}^{q,p}(t) := \frac{1}{\sqrt{2^{n}}}\,\phi^{q,p}(2^{n}t - k),
\end{align*}
for $n\geq 0$ and $k\in\left\{0,\cdots, 2^{n}-1\right\}$.
\end{definition}\medbreak\noindent
Whilst our results will not be presented in terms of the above wavelets, we shall see
that the polynomials of interest are directly related to conditions (\ref{momentcondition1}), (\ref{momentcondition2}) and (\ref{momentcondition3}).
The main result of this paper gives an effective method for approximating sample
paths of Brownian motion by a class of Jacobi-like polynomials. The proof is based on
the interpretation of these polynomials as eigenfunctions of an integral
operator defined by the Brownian bridge covariance function\footnote{The Brownian bridge is the centered Gaussian process with covariance $K_{B}(s,t) = \min(s,t)-st$.}. These orthogonal polynomials,
which lie at the heart of this paper, will also help us interpret the geometrical features
that certain normally distributed iterated integrals encode about the Brownian path.\medbreak
\begin{theorem}[A polynomial Karhunen-Lo\`{e}ve theorem for the Brownian bridge]\label{waveletthm}
Let $B$ denote a Brownian bridge on $[\hspace{0.1mm}0,1]$ and consider the Borel measure $\mu$ given by
\begin{align*}
\mu(a,b) := \int_{a}^{b}\frac{1}{x(1-x)}\,dx, \hspace{3.5mm}\text{for all open intervals}\hspace{1mm} (a,b)\subset [\hspace{0.1mm}0,1].
\end{align*}\noindent
Then there exists a family of orthogonal polynomials $\{e_{k}\}_{k\geq 1}\hspace{-0.25mm}$ with $\deg\left(e_{k}\right)\hspace{-0.25mm} =\hspace{-0.25mm} k\hspace{-0.25mm} +\hspace{-0.25mm} 1$ and
\begin{align*}
\int_{0}^{1}e_{i}\,e_{j}\,d\mu = \delta_{ij}\hspace{0.125mm},
\end{align*}\noindent
with $\delta_{ij}$ denoting the Kronecker delta, such that $B$ admits the following representation
\begin{align}\label{polyapprox}
B = \sum_{k=1}^{\infty}I_{k}e_{k}\hspace{0.125mm},
\end{align}\noindent
where  $\{I_{k}\}$ is the collection of independent centered Gaussian random variables with
\begin{align}\label{polyintegrals}
I_{k} := \int_{0}^{1}B_{t}\cdot\frac{e_{k}\left(t\right)}{t(1-t)}\,dt,
\end{align}
and
\begin{align*}
\var(I_{k}) = \frac{1}{k(k+1)}\,.
\end{align*}
Furthermore, $\left\{e_{k}\right\}$ is an optimal orthonormal basis of $L^{2}([\hspace{0.1mm}0,1], \mu)$ for approximating $B$
by truncated series expansions with respect to the following weighted $L^{2}(\mathbb{P})$ norm
\begin{align*}
\left\|X\right\|_{L_{\mu}^{2}(\mathbb{P})} := \sqrt{\hspace{0.25mm}\mathbb{E}\hspace{-0.25mm}\left[\hspace{0.25mm}\int_{0}^{1}\left(X_{s}\right)^{2}\,d\mu(s)\right]},
\end{align*}
where $X$ is a square $\mu$-integrable process.
\end{theorem}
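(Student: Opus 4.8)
The plan is to realise the claimed expansion as a Karhunen--Lo\`{e}ve decomposition with respect to the weighted inner product, where the decisive observation is that the Brownian bridge lives in the correct space: since $\mathbb{E}[B_s^2] = K_B(s,s) = s(1-s)$ exactly cancels the singular weight $w(s) = \frac{1}{s(1-s)}$, we have $\|B\|_{L^2_\mu(\mathbb{P})}^2 = \int_0^1 K_B(s,s)\,d\mu(s) = \int_0^1 1\,ds = 1 < \infty$. First I would introduce the covariance operator $T\colon L^2([\hspace{0.1mm}0,1],\mu)\to L^2([\hspace{0.1mm}0,1],\mu)$ defined by $(Tf)(s) = \int_0^1 K_B(s,t)\,f(t)\,d\mu(t)$, which is self-adjoint and positive. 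The entire theorem then reduces to exhibiting an orthonormal eigenbasis $\{e_k\}$ of $T$ with eigenvalues $\lambda_k$, since the coefficients $I_k = \int_0^1 B_t\,e_k(t)\,d\mu(t)$ will satisfy $\var(I_k) = \lambda_k$.

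Second, I would locate the eigenfunctions by converting the integral equation $Tf = \lambda f$ into a differential equation. Writing $K_B(s,t) = t(1-s)$ for $t\le s$ and $s(1-t)$ for $t \ge s$, one obtains $(Tf)(s) = (1-s)\int_0^s \frac{f(t)}{1-t}\,dt + s\int_s^1 \frac{f(t)}{t}\,dt$; differentiating twice gives $(Tf)''(s) = -\frac{f(s)}{s(1-s)}$, while the formula itself shows $(Tf)(0) = (Tf)(1) = 0$. Hence $Tf = \lambda f$ is equivalent to the singular Sturm--Liouville problem $\lambda\, s(1-s) f''(s) + f(s) = 0$ with $f(0) = f(1) = 0$. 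Membership of $f$ in $L^2(\mu)$ forces $f$ to vanish at both endpoints (otherwise $\int_0^1 f^2\,d\mu$ diverges), so I would substitute $f = s(1-s)\,p$; a direct calculation turns the equation into the shifted Jacobi equation $s(1-s)p'' + 2(1-2s)p' + \big(\tfrac{1}{\lambda}-2\big)p = 0$ (parameters $\alpha=\beta=1$), whose degree-$(k-1)$ polynomial solution exists precisely when $\frac{1}{\lambda} = k(k+1)$. This produces $e_k(s) = s(1-s)p_{k-1}(s)$ of degree $k+1$ with $\lambda_k = \frac{1}{k(k+1)}$, and the required orthonormality $\int_0^1 e_ie_j\,d\mu = \int_0^1 p_{i-1}p_{j-1}\,s(1-s)\,ds = \delta_{ij}$ is exactly orthonormality of the $p_{k-1}$ against the Jacobi weight.

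Third, I would establish completeness of $\{e_k\}$ in $L^2(\mu)$ through the isometry $\Phi\colon L^2(\mu)\to L^2([\hspace{0.1mm}0,1],ds)$, $\Phi(f) = f/\sqrt{s(1-s)}$, under which $e_k \mapsto \sqrt{s(1-s)}\,p_{k-1}$; completeness then reduces to density of polynomials in $L^2([\hspace{0.1mm}0,1], s(1-s)\,ds)$, which holds by Weierstrass approximation on the compact interval. With a complete orthonormal eigenbasis in hand, the remaining bookkeeping is routine: the covariance computation $\mathbb{E}[I_iI_j] = \int_0^1 e_i\,(Te_j)\,d\mu = \lambda_j\delta_{ij}$ yields both the variances and the vanishing of cross-correlations; each $I_k$ is a linear functional of the Gaussian process $B$ and hence Gaussian, so being jointly Gaussian and uncorrelated they are independent; and the $L^2_\mu(\mathbb{P})$ convergence of $\sum_k I_k e_k$ follows from $\mathbb{E}\big\|B - \sum_{k\le N} I_k e_k\big\|_{L^2_\mu(\mathbb{P})}^2 = 1 - \sum_{k\le N}\frac{1}{k(k+1)} \to 0$ via the telescoping identity $\sum_{k\ge 1}\frac{1}{k(k+1)} = 1 = \|B\|_{L^2_\mu(\mathbb{P})}^2$.

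Finally, optimality is the standard minimax property of the Karhunen--Lo\`{e}ve basis: among all $N$-dimensional subspaces $V \subset L^2(\mu)$, the expected squared weighted error $\mathbb{E}\|B - P_V B\|_{L^2_\mu(\mathbb{P})}^2$ is minimised by $V = \mathrm{span}\{e_1,\dots,e_N\}$, since maximising $\mathbb{E}\|P_V B\|^2_{L^2_\mu(\mathbb{P})}$ is a trace maximisation solved by the top eigenspaces of $T$ through the Courant--Fischer (Ky Fan) theorem. I expect the main obstacle to be the careful handling of the singular infinite measure $\mu$: the classical Mercer theorem presumes a continuous kernel on a compact domain with a \emph{finite} measure, so one must justify that $T$ is well defined, compact and positive on $L^2(\mu)$, that $L^2(\mu)$-membership genuinely enforces the endpoint boundary conditions, and that the expansion converges in the weighted norm rather than merely pointwise. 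The elegant resolution throughout is the cancellation $K_B(s,s) = 1/w(s)$, which places $B$ in $L^2_\mu(\mathbb{P})$ with unit norm and makes the eigenvalue sum $\sum_k\lambda_k = 1$ match the Parseval identity exactly.
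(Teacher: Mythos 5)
Your proposal is correct, but it reaches the result by a noticeably different route from the paper, and the comparison is instructive. The paper's proof leans on a Mercer theorem for kernels on general $L^{2}$ spaces: after checking that $K_B\in L^{2}([\hspace{0.1mm}0,1]^2,\mu^2)$ and that $k_B$ is $\mu$-integrable, it invokes \cite{Mercer} to obtain the eigenbasis, the uniformly convergent expansion $K_B(s,t)=\sum_k\lambda_k e_k(s)e_k(t)$, and hence $L^{2}(\mathbb{P})$ convergence of the series \emph{pointwise in $t$}; you instead construct the eigenfunctions explicitly and prove completeness by hand via the isometry $f\mapsto f/\sqrt{s(1-s)}$ onto unweighted $L^{2}$ plus Weierstrass density, which buys you independence from the generalized Mercer machinery at the cost of a slightly weaker convergence statement (you get convergence in the $L^{2}_{\mu}(\mathbb{P})$ norm from the telescoping identity $\sum_k\frac{1}{k(k+1)}=1=\mathbb{E}\|B\|^2_{L^2_\mu(\mathbb{P})}$, rather than pointwise in $t$ --- though this is exactly the norm the theorem's optimality claim uses, so it suffices). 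Your identification of the eigenfunctions also differs in form but not substance: the paper differentiates the ODE $t(1-t)\lambda_k e_k''+e_k=0$ once more and recognises the Legendre equation for $e_k'$, whereas you substitute $f=s(1-s)p$ and land on the $(1,1)$-Jacobi equation for $p$; both yield $\lambda_k=\frac{1}{k(k+1)}$ and $\deg e_k=k+1$, and your route has the side benefit of making the $\mu$-orthonormality of $\{e_k\}$ manifestly equivalent to classical Jacobi orthogonality. Finally, for optimality the paper runs a Lagrange-multiplier calculation that identifies the eigenfunctions only as stationary points of the constrained error functional, while your Courant--Fischer/Ky Fan trace-maximisation argument actually certifies the minimum; on this point your argument is the more complete of the two. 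The only places where you should be as careful as (but no more careful than) the paper are the claims that $L^{2}(\mu)$-membership forces eigenfunctions to vanish at the endpoints (true for eigenfunctions with $\lambda\neq 0$ since they equal $\frac{1}{\lambda}Tf$, which is continuous and vanishes at $0$ and $1$ by Cauchy--Schwarz estimates near the singularities) and that the ODE boundary-value problem is genuinely \emph{equivalent} to the integral equation (which follows since $(Tf-\lambda f)''=0$ with both terms vanishing at the endpoints).
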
\medbreak
\begin{proof}
Our argument is that of the Karhunen-Lo\`{e}ve theorem in general $L^{2}$ spaces.
Note that $B$ is a square $\mu$-integrable process as
\begin{align*}
\mathbb{E}\left[\hspace{0.25mm}\int_{0}^{1}(B_{s})^{2}\,d\mu(s)\right]
= \int_{0}^{1}\mathbb{E}\big[(B_{s})^{2}\big]\,d\mu(s)
= \int_{0}^{1}s(1-s)\cdot\frac{1}{s(1-s)}\,ds
= 1 < \infty.
\end{align*}
Let $K_{B}$ denote the covariance function for the standard Brownian bridge on $[\hspace{0.1mm}0, 1]$.
Since $K_{B}(s,t) = \min(s,t) - st$, it can be shown by direct calculation that $K_{B}$ satisfies
\begin{align*}
\left\|K_{B}\right\|_{L^{2}([\hspace{0.1mm}0,1]^{2},\,\mu^{2})}^{2}
= \int_{0}^{1}\int_{0}^{1}(\min(s,t) - st)^{2}\,d\mu(s)d\mu(t)
= \frac{1}{3}\pi^{2} - 3 < \infty.
\end{align*}
Hence, it follows that the integral operator $T_{K} : L^{2}([\hspace{0.1mm}0,1],\,\mu)\rightarrow L^{2}([\hspace{0.1mm}0,1],\,\mu)$ given by
\begin{align*}
(T_{K}f)(t) := \int_{0}^{1}K_{B}(s,t)f(s)\,d\mu(s),
\end{align*}
is well-defined and continuous. In addition, the variance function $k_{B}(x) := K_{B}(x,x)$ for $x\in[\hspace{0.1mm}0,1]$ is $\mu$-integrable as
\begin{align*}
\int_{0}^{1}|k_{B}(x)|\,d\mu(x)
= \int_{0}^{1}x(1-x)\cdot\frac{1}{x(1-x)}\,dx
= 1 < \infty. 
\end{align*}
Therefore, we can apply Mercer's theorem for kernels on general $L^{2}$ spaces (see \cite{Mercer}).
It then follows from Mercer's theorem that there exists an orthonormal set $\{e_{k}\}_{k\geq 1}$ of
$L^{2}([\hspace{0.1mm}0,1],\,\mu)$ consisting of eigenfunctions of $T_{K}$ such that the corresponding sequence of
eigenvalues $\{\lambda_{k}\}_{k\geq 1}$ is non-negative. Moreover, the eigenfunctions corresponding to
non-zero eigenvalues are continuous on $[\hspace{0.1mm}0,1]$ and the kernel $K_{B}$ has the representation
\begin{align}\label{mercer}
K_{B}(s,t) = \sum_{k=1}^{\infty}\lambda_{k} e_{k}(s)e_{k}(t),
\end{align}
where the series (\ref{mercer}) converges absolutely and uniformly on compact subsets of $[\hspace{0.1mm}0,1]$.\medbreak\noindent
In the next part of the proof, we will see that each $e_{k}$ is a polynomial of degree $k+1$.
As each $e_{k}$ is an eigenfunction of $T_{K}$, we have\vspace{0.5mm}
\begin{align}\label{mercerv2}
\int_{0}^{1}\frac{\min(s,t)-st}{s(1-s)}\,e_{k}(s)\,ds = \lambda_{k}e_{k}(t).
\end{align}
Since $e_{k}\in L^{2}([\hspace{0.1mm}0,1],\,\mu)$, it follows that $e_{k}(0) = 0$ and $e_{k}(1) = 0$ for each $k\geq 1$.
Therefore by using the Leibniz integral rule to twice differentiate both sides of (\ref{mercerv2})
and then multiplying by $t(1-t)$, we observe that $e_{k}$ satisfies the differential equation
\begin{align}\label{mercerv3}
t(1-t)\lambda_{k}e_{k}^{\prime\prime}(t) + e_{k}(t) = 0.
\end{align}
Since $e_{k} \neq 0$, we have that $\lambda_{k}\neq 0$. Differentiating the LHS of the ODE (\ref{mercerv3}) produces
\begin{align*}
t(1-t)\frac{d^{2}}{dt^{2}}(e_{k}^{\prime}) + (1-2t)\frac{d}{dt}(e_{k}^{\prime})+\frac{1}{\lambda_{k}}e_{k}^{\prime} = 0. 
\end{align*} 
For $x\in[-1,1]$, we define the function
\begin{align*}
y_{k}(x) := e_{k}^{\prime}\left(\frac{1}{2}(1+x)\right)\hspace{-0.5mm}.
\end{align*}
Thus $y_{k}$ satisfies the following differential equation\vspace{1mm}
\begin{align}\label{legendrede}
(1-x^{2})y_{k}^{\prime\prime}(x) - 2xy_{k}^{\prime}(x) + \frac{1}{\lambda_{k}}y_{k}(x) = 0.
\end{align}
Remarkably, this is the Legendre differential equation \cite{ClassicalTheory}. It then follows using
classical Sturm-Liouville theory that $\frac{1}{\lambda_{k}} = k(k+1)$ and $y_{k}$ is proportional to the $k$-th Legendre polynomial.
Therefore, the derivative $e_{k}^{\prime}$ will be a constant multiple of the $k$-th shifted Legendre polynomial and hence
each $e_{k}$ is a polynomial of degree $k+1$.\medbreak\noindent
We can now define the following integrals for $k\geq 1$,
\begin{align*}
I_{k} := \int_{0}^{1}B_{t}\cdot\frac{e_{k}\left(t\right)}{t(1-t)}\,dt.
\end{align*}
It follows from Fubini's theorem that
\begin{align*}
\mathbb{E}[I_{k}] & = 0,\\[3pt]
\mathbb{E}[I_{i}I_{j}]
& = \mathbb{E}\left[\hspace{0.25mm}\int_{0}^{1}\int_{0}^{1}B_{s}B_{t}\,e_{i}(s)\,e_{j}(t)\,d\mu(s)\,d\mu(t)\right]\\
& = \int_{0}^{1}\int_{0}^{1}\mathbb{E}[B_{s}B_{t}]\,e_{i}(s)\,e_{j}(t)\,d\mu(s)\,d\mu(t)\\
& = \int_{0}^{1}\,e_{j}(t)\left(\int_{0}^{1}K_{B}(s,t)\,e_{i}(s)\,d\mu(s)\right)\,d\mu(t)\\[3pt]
& = \lambda_{i}\delta_{ij}.
\end{align*}
Since each $I_{k}$ is defined by a linear functional on the same Gaussian process $B$, we see
from the above that $\{I_{k}\}$ is a collection of uncorrelated (and therefore independent)
Gaussian random variables with
\begin{align*}
\mathbb{E}[I_{k}] & = 0,\\[3pt]
\var(I_{k}) & = \frac{1}{k(k+1)}.
\end{align*}
Finally, the $L^{2}(\mathbb{P})$ convergence we require follows as
\begin{align*}
\mathbb{E}\left[\left(B_{t}-\sum_{k=1}^{N}I_{k}e_{k}(t)\right)^{2}\right]
& = k_{B}(t) + \mathbb{E}\left[\hspace{0.25mm}\sum_{i,j=1}^{N}I_{i}I_{j}\,e_{i}(t)\hspace{0.25mm}e_{j}(t)\right]
- 2\hspace{0.25mm}\mathbb{E}\left[\hspace{0.25mm}B_{t}\sum_{k=1}^{N}I_{k}e_{k}(t)\right]\\
& = k_{B}(t) + \sum_{i=1}^{N}\lambda_{i}e_{i}^{2}(t)
- 2\hspace{0.25mm}\mathbb{E}\left[\hspace{0.25mm}\sum_{i=1}^{N}\int_{0}^{1}B_{s}B_{t}\,e_{i}(s)\hspace{0.25mm}e_{i}(t)\,d\mu(s)\right]\\[2pt]
& = k_{B}(t) - \sum_{i=1}^{N}\lambda_{i}e_{i}^{2}(t),
\end{align*}
which converges to $0$ by Mercer's theorem (\ref{mercer}).\medbreak\noindent
All that remains is to prove optimality for the truncated series expansions of (\ref{polyapprox}).
Let $\left\{f_{k}\right\}_{k\geq 1}$ denote an orthonormal basis of $L^{2}([\hspace{0.1mm}0,1], \mu)$ such that
\begin{align*}
B = \sum_{k=1}^{\infty}J_{k}f_{k},\hspace{3mm}\text{where}\hspace{3mm}
J_{k}:= \int_{0}^{1}B_{t}\,f_{k}(t)\,d\mu(t),\hspace{3mm}\forall k\geq 1.
\end{align*}
For $n\geq 1$, we consider an error process associated with the above: $r_{n} := \sum\limits_{k=n+1}^{\infty}J_{k}f_{k}\,$.\medbreak\noindent
Then the square $L^{2}(\mathbb{P})$ norm of the $n$-th error process admits the following expansion,
\begin{align*}
\left\|r_{n}(t)\right\|_{L^{2}(\mathbb{P})}^{2}
& = \mathbb{E}\hspace{-0.25mm}\left[\hspace{0.25mm}\sum_{i=n+1}^{\infty}\sum_{j=n+1}^{\infty}J_{i}J_{j}f_{i}(t)f_{j}(t)\right]\\[2pt]
& = \sum_{i=n+1}^{\infty}\sum_{j=n+1}^{\infty}\mathbb{E}\hspace{-0.25mm}\left[\hspace{0.25mm}\int_{0}^{1}\int_{0}^{1}B_{s}B_{t}\,f_{i}(s)f_{j}(t)\,d\mu(s)\,d\mu(t)\right]\hspace{-0.25mm}f_{i}(t)f_{j}(t)\\[2pt]
& = \sum_{i=n+1}^{\infty}\sum_{j=n+1}^{\infty}\left(\int_{0}^{1}\int_{0}^{1}K_{B}(s,t)\,f_{i}(s)f_{j}(t)\,d\mu(s)\,d\mu(t)\right)f_{i}(t)f_{j}(t).
\end{align*}
Integrating the above with respect to $\mu$ and using the orthogonality of $\{f_{k}\}_{k\geq 1}$ gives
\begin{align*}
\left\|r_{n}\right\|_{L_{\mu}^{2}(\mathbb{P})}^{2} = \int_{0}^{1}\left\|r(t)\right\|_{L^{2}(\mathbb{P})}^{2}\,d\mu(t)
= \sum_{k=n+1}^{\infty}\int_{0}^{1}\int_{0}^{1}K_{B}(s,t)f_{k}(s)f_{k}(t)\,d\mu(s)\,d\mu(t).
\end{align*}
Note that any optimal orthonormal basis of $L^{2}([\hspace{0.1mm}0,1], \mu)$ solves the following problem:
\begin{align*}
\min_{f_{k}} \left\|r_{n}\right\|_{L_{\mu}^{2}(\mathbb{P})}^{2} \hspace{3mm}
\text{subject to}\hspace{3mm} \left\|f_{k}\right\|_{L^{2}([\hspace{0.1mm}0,1],\mu)} = 1.
\end{align*}
By introducing Lagrange multipliers $\nu_{k}$, we wish to find functions $\{f_{k}\}$ that minimize
\begin{align*}
E_{n}[\left\{f_{k}\right\}]:=\hspace{-1mm}\sum_{k=n+1}^{\infty}\int_{0}^{1}\int_{0}^{1}\hspace{-0.35mm}K_{B}(s,t)f_{k}(s)f_{k}(t)d\mu(s)d\mu(t)- \nu_{k}\left(\int_{0}^{1}\hspace{-0.5mm}\big(f_{k}(s)\big)^{2}d\mu(s) - 1\right)\hspace{-0.35mm}.
\end{align*}
We will now consider the following square integrable functions, defined for $s,t\in\left(0,1\right)$:
\begin{align*}
\tilde{f}_{k}(t) := f_{k}(t)\cdot \frac{1}{\sqrt{t(1-t)}}\,, \hspace{5mm}
\tilde{K}_{B}(s,t) := K_{B}(s,t)\cdot \frac{1}{\sqrt{s(1-s)}}\cdot\frac{1}{\sqrt{t(1-t)}}\,.
\end{align*}
Therefore it is enough to find a family of functions $\{\tilde{f}_{k}\}$ in $L^{2}([\hspace{0.1mm}0,1])$ which minimizes
\begin{align*}
\tilde{E}_{n}[\{\tilde{f}_{k}\}]
:= \sum_{k=n+1}^{\infty}\int_{0}^{1}\int_{0}^{1}\tilde{K}_{B}(s,t)\tilde{f}_{k}(s)\tilde{f}_{k}(t)\,ds\,dt
- \nu_{k}\left(\int_{0}^{1}\big(\tilde{f}_{k}(s)\big)^{2}\,ds - 1\right)\hspace{-0.25mm}.
\end{align*}
To find a minimizer, we set the functional derivative of $\tilde{E}_{n}$ with respect to $\tilde{f}_{k}$ to zero.
\begin{align*}
\frac{\partial \tilde{E}_{n}}{\partial \tilde{f}_{k}(t)}
= 2\int_{0}^{1}\tilde{K}_{B}(s,t)\tilde{f}_{k}(s)\,ds - 2\nu_{k}\tilde{f}_{k}(t) = 0.
\end{align*}
By using the definitions of $\tilde{f}_{k}$ and $\tilde{K}_{B}$, it is trivial to show the above is equivalent to
\begin{align*}
\int_{0}^{1}K_{B}(s,t)f_{k}(s)\,d\mu(s) = \nu_{k}f_{k}(t),
\end{align*}
which is satisfied if and only if $f_{k}$ are eigenfunctions of $T_{K}$.
\end{proof}\medbreak\noindent
This result can naturally be extended to express Brownian motion using polynomials.\medbreak
\begin{theorem}\label{bmversion}
If $W$ is a standard Brownian motion and $B$ is the associated bridge process on $[\hspace{0.1mm}0,1]$,
then by Theorem \ref{waveletthm}, we have the below representation of $W$:
\begin{align}\label{secpolyapprox}
W = W_{1}e_{0} + \sum_{k=1}^{\infty}I_{k}e_{k}\hspace{0.125mm},
\end{align}
where $e_{0}(t) := t\hspace{0.25mm}$ for $\hspace{0.25mm}t\in[\hspace{0.1mm}0,1]$, and the random variables $\{I_{k}\}$ are independent of $W_{1}$.\vspace{-4mm}
\begin{figure}[h]\label{secparaboladiagram}
\centering
\includegraphics[width=0.975\textwidth]{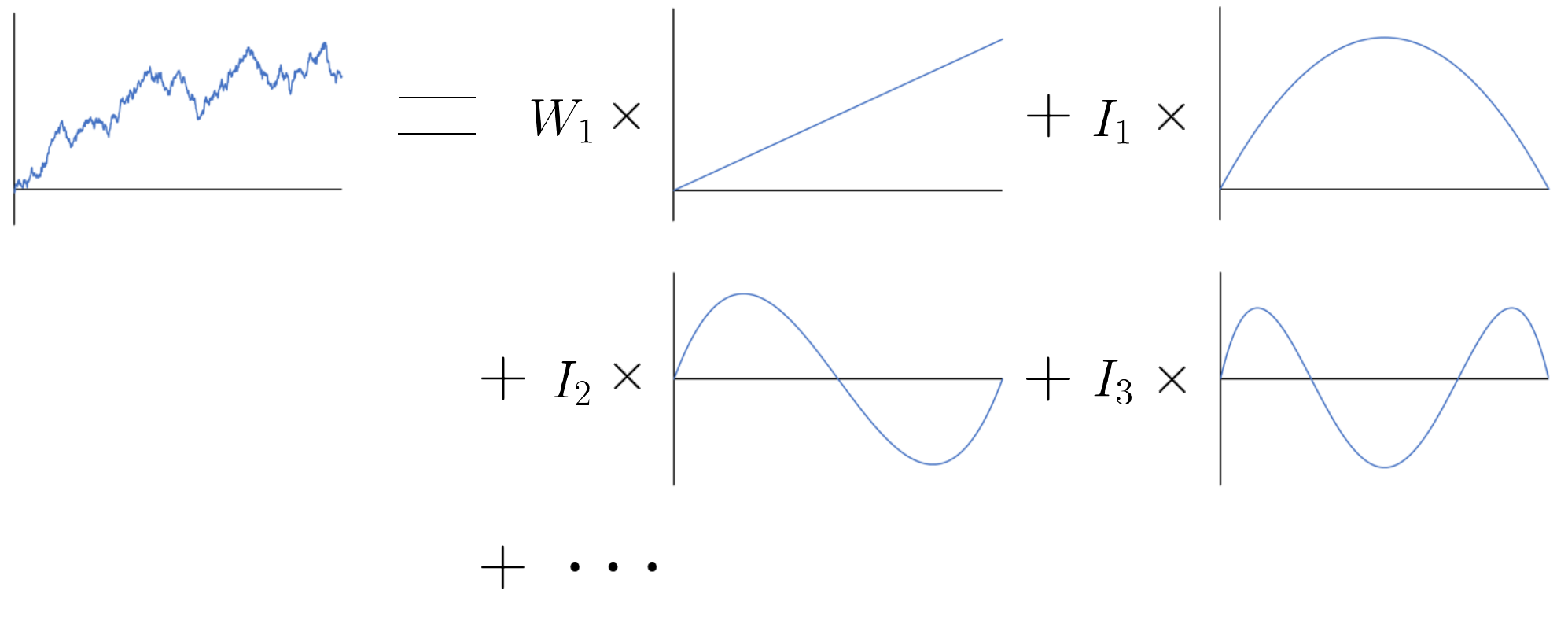}\vspace{-1.5mm}
\caption{Brownian motion can be expressed as a sum of polynomials with independent weights.
Moreover, these polynomials are orthogonal and capture different time integrals of the original path.}
\end{figure}
\end{theorem}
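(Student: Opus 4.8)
The plan is to reduce the statement to Theorem \ref{waveletthm} via the classical bridge decomposition of $W$, and then to establish the asserted independence through joint Gaussianity.

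First I would recall that the Brownian bridge associated with $W$ is $B_{t} = W_{t} - t\hspace{0.25mm}W_{1}$ for $t\in[\hspace{0.1mm}0,1]$, so that indeed $B_{0} = B_{1} = 0$. Rearranging gives $W_{t} = t\hspace{0.25mm}W_{1} + B_{t} = W_{1}e_{0}(t) + B_{t}$ with $e_{0}(t) = t$. Substituting the expansion $B = \sum_{k=1}^{\infty}I_{k}e_{k}$ from Theorem \ref{waveletthm} then yields the representation (\ref{secpolyapprox}) immediately, and the convergence of the truncated sums in $L^{2}(\mathbb{P})$ (pointwise in $t$) is inherited from that theorem, since adding the extra term $W_{1}e_{0}(t)$ does not affect convergence of the tail.

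The substantive claim is that $\{I_{k}\}$ is independent of $W_{1}$. Since each $I_{k} = \int_{0}^{1}B_{t}\cdot\frac{e_{k}(t)}{t(1-t)}\,dt$ is a measurable linear functional of the process $B$, it suffices to show that the whole bridge process $B$ is independent of the endpoint $W_{1}$. I would argue this via joint Gaussianity: the family $(B, W_{1})$ is jointly Gaussian, being a linear image of the Gaussian process $W$, so it is enough to verify that $B_{t}$ and $W_{1}$ are uncorrelated for every $t$. A one-line computation gives
\[
\mathbb{E}[B_{t}W_{1}] = \mathbb{E}[W_{t}W_{1}] - t\,\mathbb{E}[W_{1}^{2}] = \min(t,1) - t = 0,
\]
so $B$ and $W_{1}$ are independent, and hence so are $\{I_{k}\}$ and $W_{1}$.

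The main (and essentially only) obstacle is making the independence argument at the level of processes rather than fixed times: uncorrelatedness of $B_{t}$ and $W_{1}$ for each individual $t$ upgrades to independence of the generated $\sigma$-algebras only because of joint Gaussianity. I would therefore be careful to record that $(B_{t_{1}}, \dots, B_{t_{m}}, W_{1})$ is Gaussian for every finite collection of times $t_{1}, \dots, t_{m}$, which is what licenses passing from zero covariance to genuine independence of $B$ from $W_{1}$ (and thus of the functionals $\{I_{k}\}$ from $W_{1}$). Everything else — the bridge substitution and the inheritance of $L^{2}(\mathbb{P})$ convergence — is routine.
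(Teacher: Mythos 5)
Your proposal is correct and follows exactly the route the paper intends: the theorem is stated as an immediate consequence of Theorem \ref{waveletthm} via the decomposition $W_{t} = t\hspace{0.25mm}W_{1} + B_{t}$, with independence of $\{I_{k}\}$ from $W_{1}$ following from joint Gaussianity and the covariance computation $\mathbb{E}[B_{t}W_{1}] = 0$. Your extra care in upgrading pointwise uncorrelatedness to independence of the process $B$ from $W_{1}$ is a welcome bit of rigour but does not change the argument.
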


\noindent
In the rest of this section, we shall study the key objects introduced in Theorem \ref{waveletthm}.
Since each orthogonal polynomial lies in $L^{2}([\hspace{0.1mm}0,1],\,\mu)$, it must have roots at $0$ and $1$.
Therefore $e_{k}\cdot\frac{1}{t(1-t)}$ is itself a polynomial but with degree $k-1$, and one can repeatedly
apply the integration by parts formula to the stochastic integrals $\{I_{k}\}$ defined by ($\ref{polyintegrals}$).
This enables us to express each $I_{k}$ in terms of iterated integrals of Brownian motion.
Moreover, as $e_{k}\cdot\frac{1}{t(1-t)}$ has precisely $k-2$ non-zero derivatives, the highest order
iterated integral that is required to fully describe $I_{k}$ is $\int_{0<s_{1}<\cdots<s_{k}<1}B_{s_{1}}\,ds_{1}\,\cdots\,ds_{k}\hspace{0.125mm}$.\medbreak\noindent
So by applying the integration by parts formula as above, we can construct a lower triangular $n\times n$ matrix $M_{n}$ with non-zero diagonal entries that characterizes the relationship between $\{I_{k}\}_{1\leq k\leq n}$ and a set of $n$ iterated integrals of Brownian motion.\medbreak\noindent
Hence, for $n\geq 1$, we can express the $n$ independent Gaussian integrals $\{I_{k}\}_{1\leq k\leq n}$ as
\begin{align}\label{itintrelate}
\begin{pmatrix}
I_{1} \\ \vdots \\ I_{n}
\end{pmatrix} 
= M_{n}
\begin{pmatrix}
\int_{0<s_{1}<1}B_{s_{1}}\,ds_{1} \\ \vdots \\
\,\int_{0<s_{1}<\cdots<s_{n}<1}B_{s_{1}}\,ds_{1}\,\cdots\,ds_{n}\,
\end{pmatrix}.
\end{align}
Since $M_{n}$ is an invertible matrix, it follows that the column vectors appearing in (\ref{itintrelate})
both encode the same information about the Brownian bridge. This enables
us to establish a connection between Brownian motion, iterated integrals and polynomials.\medbreak
\begin{theorem}\label{polyapproxthm}
Consider\hspace{0.25mm} the\hspace{0.25mm} below\hspace{0.25mm} conditional\hspace{0.25mm} expectation\hspace{0.25mm} of\hspace{0.25mm} Brownian\hspace{0.25mm} motion,
\begin{align}\label{unbiasedestimate}
\hspace{-0.85mm}W_{t}^{n} := \mathbb{E}\hspace{-0.25mm}\left[W_{t}\,\Big|\,W_{1}\hspace{0.25mm}, \int_{0<s_{1}<1}W_{s_{1}}\,ds_{1}\hspace{0.25mm},
\cdots, \int_{0<s_{1}<\cdots<s_{n-1}<1}W_{s_{1}}\,ds_{1}\cdots \,ds_{n-1}\right]\hspace{-0.5mm}.
\end{align}\medbreak\noindent
where $t\in[\hspace{0.1mm}0,1]$. Then $W^{n}$ is the unique polynomial of degree $n$ with a root at $0$ that matches the increment $W_{1}$ and $n-1$ iterated time integrals of the path $\,W$ given by:
\begin{align}\label{nminusoneintegrals}
\int_{0<s_{1}<1}W_{s_{1}}\,ds_{1}\hspace{0.25mm},
\cdots, \int_{0<s_{1}<\cdots<s_{n-1}<1}W_{s_{1}}\,ds_{1}\cdots \,ds_{n-1}\hspace{0.125mm}.
\end{align}
\end{theorem}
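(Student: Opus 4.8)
The plan is to first identify the $\sigma$-algebra that we are conditioning on and then to read off the conditional expectation directly from the expansion (\ref{secpolyapprox}). Writing $B_t = W_t - t\hspace{0.15mm}W_1$ for the associated bridge, knowledge of $W_1$ together with the iterated time integrals of $W$ in (\ref{nminusoneintegrals}) is equivalent to knowledge of $W_1$ together with the corresponding iterated time integrals of $B$, since the two families differ only by terms that are polynomial in $W_1$. By the relation (\ref{itintrelate}) applied with $n$ replaced by $n-1$, and the invertibility of $M_{n-1}$, these iterated integrals of $B$ in turn generate the same information as $I_1,\dots,I_{n-1}$. Hence the conditioning $\sigma$-algebra equals $\sigma(W_1, I_1,\dots,I_{n-1})$. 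Since Theorem~\ref{waveletthm} and Theorem~\ref{bmversion} guarantee that $W_1, I_1, I_2,\dots$ are jointly independent with $\mathbb{E}[I_k]=0$, conditioning the expansion $W_t = W_1 e_0(t) + \sum_{k\geq 1} I_k e_k(t)$ leaves the first $n-1$ stochastic terms unchanged and annihilates the rest, yielding $W_t^n = W_1\hspace{0.25mm} t + \sum_{k=1}^{n-1} I_k e_k(t)$.

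It then remains to check that this random function has the stated properties. Because $\deg(e_k)=k+1$, the leading term $e_{n-1}$ has degree $n$, so $W^n$ is a polynomial of degree $n$; since $e_0(0)=0$ and each $e_k(0)=0$ it has a root at $0$, and since $e_k(1)=0$ for $k\geq 1$ it satisfies $W_1^n = W_1$. For the iterated integrals I would use the elementary identity $\int_{0<s_1<\cdots<s_m<1} g(s_1)\,ds_1\cdots ds_m = \int_0^1 g(s)\,\frac{(1-s)^{m-1}}{(m-1)!}\,ds$, which shows that matching the $n-1$ integrals in (\ref{nminusoneintegrals}) is equivalent to matching $\int_0^1 W_s\,p(s)\,ds$ for every polynomial $p$ with $\deg(p)\leq n-2$. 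Since $W_s - W_s^n = \sum_{k\geq n} I_k e_k(s)$ and the polynomials $e_j(s)/(s(1-s))$ for $j=1,\dots,n-1$ range over all degrees $0,\dots,n-2$, the required vanishing reduces to $\int_0^1 e_k\, e_j\,d\mu = \delta_{kj}=0$ for $k\geq n > j$, which is precisely the orthogonality established in Theorem~\ref{waveletthm}.

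For uniqueness, suppose $q_1,q_2$ are two degree-$n$ polynomials vanishing at $0$ that satisfy all $n$ constraints; then $q:=q_1-q_2$ has $q(0)=q(1)=0$, so $q(t)=t(1-t)\,r(t)$ with $\deg(r)\leq n-2$, and $\int_0^1 q\,p\,ds = 0$ for every $p$ with $\deg(p)\leq n-2$. Taking $p=r$ gives $\int_0^1 t(1-t)\,r(t)^2\,dt = 0$, and since $t(1-t)>0$ on $(0,1)$ this forces $r\equiv 0$, hence $q\equiv 0$. The main obstacle is the very first step: the genuine content of the theorem is the identification of the conditioning $\sigma$-algebra with $\sigma(W_1, I_1,\dots,I_{n-1})$, as this is what allows the independence structure of the Karhunen--Lo\`{e}ve coefficients to collapse the conditional expectation into a finite polynomial. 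Everything downstream is then a consequence of orthogonality together with a dimension count.
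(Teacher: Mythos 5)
Your proof is correct and follows essentially the same route as the paper: identify the conditioning $\sigma$-algebra with $\sigma(W_1, I_1,\dots,I_{n-1})$ via the invertible matrix relation (\ref{itintrelate}), collapse the conditional expectation using the independence of the Karhunen--Lo\`{e}ve coefficients, and verify the integral-matching via orthogonality of $\{e_k\}$ with respect to $\mu$. Your treatment is in fact slightly more complete than the paper's in two places --- you make explicit the kernel identity converting iterated time integrals into integrals against $\tfrac{(1-s)^{m-1}}{(m-1)!}$, and you replace the paper's bare count of $n+1$ constraints with a genuine injectivity argument via $\int_0^1 t(1-t)\hspace{0.25mm}r(t)^2\,dt = 0$.
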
\medbreak
\begin{proof}
It is a direct consequence of (\ref{itintrelate}) that $W_{t}^{n} = \mathbb{E}[W_{t}\,|\,W_{1}, I_{1}, \cdots, I_{n-1}]$.
Hence by (\ref{secpolyapprox}) and independence of the random variables $\{W_{1}, I_{1}, \cdots\}$, we have that
\begin{align}\label{brownianpoly}
W^{n} = W_{1}e_{0} + \sum_{k=1}^{n-1}I_{k}e_{k}\hspace{0.125mm}.
\end{align}
Thus $W^{n}$ is indeed a polynomial of degree $n$ with a root at $0$ and that matches the
increment of the Brownian path. Without loss of generality we can now assume $n\geq 2$.
All that remains is to argue $W^{n}$ matches the $n-1$ iterated integrals given in (\ref{nminusoneintegrals}).
Using the orthogonality of $\{e_{k}\}$, it follows directly from (\ref{brownianpoly}) that for $1 \leq k \leq n -1$:
\begin{align*}
I_{k} & = \int_{0}^{1}\Big(W_{t} - W_{1}e_{0}(t)\Big)\cdot\frac{e_{k}\left(t\right)}{t(1-t)}\,dt\\[2pt]
& = \int_{0}^{1}\left(W_{t}^{n} + \sum_{m=n}^{\infty}I_{m}e_{m}(t) - W_{1}e_{0}(t)\right)\cdot\frac{e_{k}\left(t\right)}{t(1-t)}\,dt\\[2pt]
& = \int_{0}^{1}\Big(W_{t}^{n} - W_{1}e_{0}(t)\Big)\cdot\frac{e_{k}\left(t\right)}{t(1-t)}\,dt
+ \underbrace{\sum_{m=n}^{\infty}I_{m}\int_{0}^{1}\frac{e_{m}\left(t\right)e_{k}\left(t\right)}{t(1-t)}\,dt}_{=\,0.}\hspace{-0.3mm}.
\end{align*}
Hence $W^{n}$ matches the integrals of Brownian motion against polynomials with
degree
at most $n-1$. By the same argument used in the derivation of (\ref{itintrelate}), it follows that
$W^n$ matches the various iterated time integrals given in the statement of the theorem.
The uniqueness of $W^n$ is now a consequence of having $n+1$ different constraints.
\end{proof}\medbreak
\subsection{Properties of orthogonal polynomials} Although Theorem \ref{waveletthm} and
Theorem \ref{bmversion} are interesting results from a theoretical point of view, both lack an
explicit construction of the polynomials $\left\{e_{k}\right\}$ that could be implemented in practice.
On the other hand, it was shown that the defining eigenfunction property of each $e_{k}$
implies that its derivative $e_{k}^{\prime}$ is proportional to the $k$-th shifted Legendre polynomial.
Hence the family $\left\{e_{k}\right\}$ is the (normalized) shifted $(\alpha, \beta)$-Jacobi polynomials but with
$\alpha = \beta = -1$. Since Jacobi polynomials are typically studied with $\alpha, \beta > -1$, it is necessary to show there exists a well-defined limit when the parameters approach $-1$.\medbreak
\begin{figure}[h]\label{polynomialdiagams2}
\centering
\includegraphics[width=\textwidth]{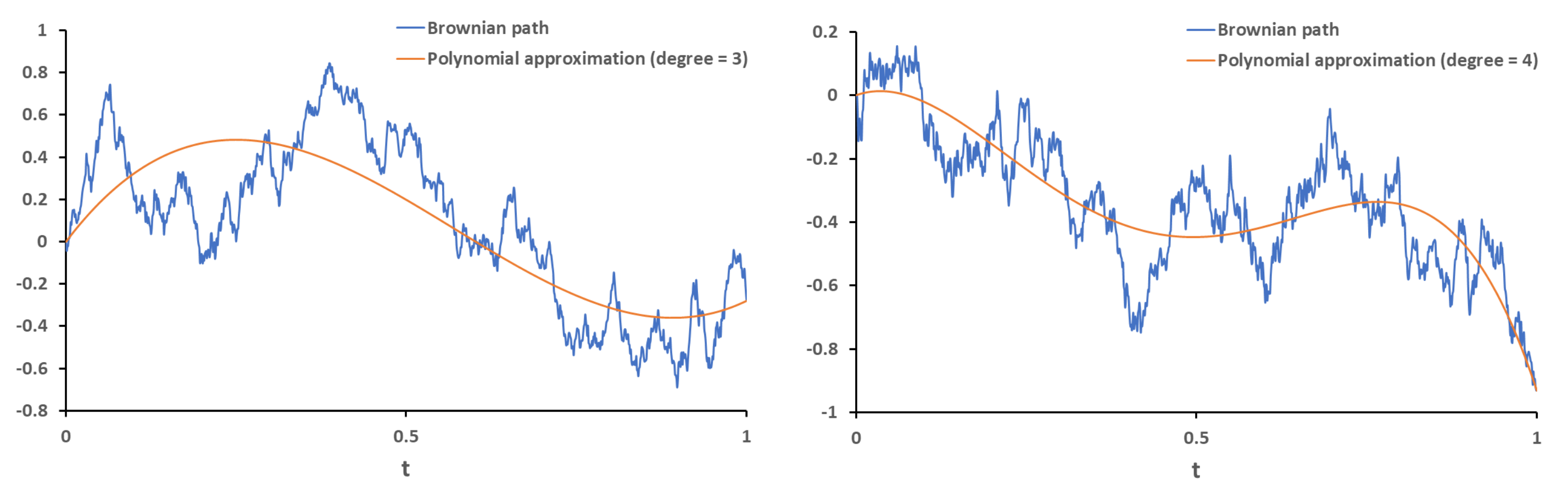}
\caption{Sample paths of Brownian motion with corresponding polynomial approximations.}
\end{figure}
\begin{definition}\label{jacobidef} For $k\geq 2$, the $k$-th degree $(\text{-}1,\text{-}1)$-Jacobi polynomial $P_{k}^{(\text{-}1, \text{-}1)}$ is
\begin{align*}
P_{k}^{(\text{-}1, \text{-}1)} := \lim_{\alpha, \beta \rightarrow -1^{+}}P_{k}^{(\alpha, \beta)}.
\end{align*}
\end{definition}\noindent
Naturally, for this definition to be unambiguous, we will require the following lemma.
\medbreak
\begin{lemma}\label{jacobicheck}
Let $P_{k}^{(\alpha, \beta)}$ denote the $k$-th degree $(\alpha, \beta)$-Jacobi polynomial on $[-1,1]$.
Then for $k\geq 2$, there exists a real-valued polynomial $P_{k}$ such that
$\big\|P_{k} - P_{k}^{(\alpha, \beta)}\big\|_{\infty} \rightarrow 0$ as $\alpha, \beta \rightarrow -1^{+}$.
\end{lemma}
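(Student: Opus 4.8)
The plan is to exhibit an explicit representation of $P_{k}^{(\alpha,\beta)}$ in which the dependence on the parameters $\alpha$ and $\beta$ is manifestly regular near $(-1,-1)$, and then to read off both the limiting polynomial $P_{k}$ and the uniform convergence directly from that representation. The naive first attempt---the hypergeometric form $P_{k}^{(\alpha,\beta)}(x) = \frac{(\alpha+1)_{k}}{k!}\,{}_2F_1(-k,\,k+\alpha+\beta+1;\,\alpha+1;\,\tfrac{1-x}{2})$---is not suitable, because the denominator parameter $\alpha+1$ tends to $0$ and the individual terms of the hypergeometric series blow up; the finite limit only survives after a cancellation against the vanishing prefactor $(\alpha+1)_{k}\to 0$. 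So the main obstacle is really a presentational one: to sidestep this apparent singularity by choosing a representation that genuinely has no poles at $\alpha=\beta=-1$.

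The representation I would use instead is the classical finite-sum form of the Jacobi polynomials (see \cite{ClassicalTheory}),
\begin{align*}
P_{k}^{(\alpha,\beta)}(x) = \sum_{s=0}^{k}\binom{k+\alpha}{s}\binom{k+\beta}{k-s}\left(\frac{x-1}{2}\right)^{k-s}\left(\frac{x+1}{2}\right)^{s},
\end{align*}
where the generalized binomial coefficient is $\binom{k+\alpha}{s} = \frac{(k+\alpha)(k+\alpha-1)\cdots(k+\alpha-s+1)}{s!}$. The key observation is that each such coefficient is a polynomial in $\alpha$ of degree $s$, and likewise $\binom{k+\beta}{k-s}$ is a polynomial in $\beta$; hence every summand, and therefore the whole finite sum, is jointly a polynomial in $(\alpha,\beta,x)$. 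In particular there is no singularity as $\alpha,\beta\to-1^{+}$ (and, being polynomial, the limit is the same from every direction, so the one-sided approach causes no difficulty), and I may simply define $P_{k}$ by substituting $\alpha=\beta=-1$ into the sum.

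With $P_{k}$ so defined, I would finish by collecting powers of $x$: expanding the finite sum, the coefficient $c_{j}(\alpha,\beta)$ of each monomial $x^{j}$ in $P_{k}^{(\alpha,\beta)}$ is a polynomial in $(\alpha,\beta)$, so it converges to the corresponding coefficient $c_{j}$ of $P_{k}$ as $(\alpha,\beta)\to(-1,-1)$. Since $P_{k}^{(\alpha,\beta)}-P_{k}$ is a polynomial of degree at most $k$ all of whose coefficients tend to $0$, and since $\|x^{j}\|_{\infty}=1$ on $[-1,1]$, the triangle inequality gives $\big\|P_{k}^{(\alpha,\beta)}-P_{k}\big\|_{\infty}\leq \sum_{j=0}^{k}\big|c_{j}(\alpha,\beta)-c_{j}\big|\to 0$, which is exactly the asserted uniform convergence. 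I would also note that the hypothesis $k\geq 2$ is precisely what makes $P_{k}$ a genuine degree-$k$ polynomial: its leading coefficient is $2^{-k}\binom{2k+\alpha+\beta}{k}$ evaluated at $\alpha=\beta=-1$, namely $\frac{(2k-2)!}{2^{k}\,k!\,(k-2)!}$, which is nonzero exactly when $k\geq 2$, whereas for $k=1$ the limit collapses to the zero polynomial.
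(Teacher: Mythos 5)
Your proof is correct, and it takes a genuinely different route from the paper's. The paper proves the lemma via the derivative/integral identity $P_{k}^{(\alpha,\beta)}(x) = \frac{k+\alpha+\beta+1}{2}\int_{-1}^{x}P_{k-1}^{(\alpha+1,\beta+1)}(u)\,du$: it defines $P_{k}(x):=\frac{k-1}{2}\int_{-1}^{x}P_{k-1}^{(0,0)}(u)\,du$, shows by induction on the three-term recurrence that $\|P_{n}^{(\alpha,\beta)}-P_{n}^{(0,0)}\|_{\infty}\to 0$ as $\alpha,\beta\to 0$, and passes to the limit under the integral sign. You instead work from the explicit finite-sum representation, observe that every coefficient of $P_{k}^{(\alpha,\beta)}$ is jointly polynomial in $(\alpha,\beta)$, and conclude by evaluating at $\alpha=\beta=-1$ and bounding $\|\cdot\|_{\infty}$ by the sum of coefficient differences. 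Your argument is more elementary and arguably tighter: it gives the unrestricted two-sided limit for free, and it cleanly closes the step that the paper leaves slightly informal, namely upgrading pointwise convergence of the $P_{k}^{(\alpha,\beta)}$ to uniform convergence (which really does require passing through the coefficients, as you do). Your closing remark on the leading coefficient $2^{-k}\binom{2k+\alpha+\beta}{k}$ also explains concretely why $k\geq 2$ is needed and why $P_{1}^{(\alpha,\beta)}$ degenerates to zero, which the paper does not address. What the paper's route buys in exchange is the structural identity $\frac{d}{dx}\big(P_{k+1}^{(\text{-}1,\text{-}1)}\big)=\frac{k}{2}\,P_{k}^{(0,0)}$, i.e.\ that the limiting polynomial's derivative is a rescaled Legendre polynomial; this fact is reused explicitly in the proof of the normalization formula for the $e_{k}$ (Theorem \ref{explicitformulathm}), so if one adopted your proof wholesale that identity would have to be established separately.
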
\medbreak
\begin{proof}
Below is an identity for Jacobi polynomials, with $\alpha, \beta > - 1$, given in \cite{JacobiPoly}.
\begin{align}\label{jacobifact}
P_{k}^{(\alpha, \beta)}(x) = \frac{k+\alpha+\beta+1}{2}\int_{-1}^{x}P_{k-1}^{(\alpha + 1, \beta + 1)}(u)\,du,
\hspace{5mm}\text{for all}\hspace{2mm} k \geq 2.
\end{align}
Therefore, we shall define the $k$-th degree polynomial $P_{k}$ over the interval $[-1,1]$ by
\begin{align}\label{jacobifact2}
P_{k}(x) := \frac{k-1}{2}\int_{-1}^{x}P_{k-1}^{(0, 0)}(u)\,du, \hspace{5mm}\text{for all}\hspace{2mm} k \geq 2.
\end{align}
It is straightforward to verify that $\lim_{\hspace{0.25mm}\alpha, \beta\, \rightarrow\, 0}\|\hspace{0.5mm}P_{n}^{(\alpha, \beta)} -\hspace{0.25mm} P_{n}^{(0, 0)}\hspace{0.25mm}\|_{\infty} = 0$ for $n\in\{0, 1\}$.
So by induction and the recurrence relation for Jacobi polynomials (see \cite{JacobiPoly}), we have:
\begin{align*}
\big\|\hspace{0.5mm}P_{n}^{(\alpha, \beta)} -\hspace{0.25mm} P_{n}^{(0, 0)}\hspace{0.25mm}\big\|_{\infty} \rightarrow 0\hspace{2.5mm}\text{as}\,\,\,\alpha, \beta\, \rightarrow\, 0,
\end{align*}
for all $n\geq 0$. Hence by the dominated convergence theorem with (\ref{jacobifact}) and (\ref{jacobifact2}), it follows that $P_{k}^{(\alpha, \beta)}$ will converge pointwise to $P_{k}$ as $\alpha, \beta\, \rightarrow\, -1$ for each $k\geq 1$. Finally, the result follows as $P_{k}^{(\alpha, \beta)}$ and $P_{k}$ are always polynomials with degree $k$.
\end{proof}\medbreak\noindent
Using the above definition for $(\text{-}1,\text{-}1)$-Jacobi polynomials, we can give an
explicit formula for the orthonormal polynomials $\{e_{k}\}_{k\geq 1}$ appearing in Theorems \ref{waveletthm} and \ref{bmversion}.\medbreak
\begin{theorem}\label{explicitformulathm}
Suppose each $e_{k}$ has a positive leading coefficient. Then for $k\geq 1$,
\begin{align}\label{explicitformula}
e_{k}(t) & = \frac{1}{k}\sqrt{k(k+1)(2k+1)}\,P_{k+1}^{(\text{-}1, \text{-}1)}(2t - 1),
\hspace{4mm}\forall t\in[\hspace{0.1mm}0,1].
\end{align}
\end{theorem}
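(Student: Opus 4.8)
The plan is to leverage the structural facts already obtained in the proof of Theorem~\ref{waveletthm}: that $e_k(0)=e_k(1)=0$, that the derivative $e_k'$ is a constant multiple of the $k$-th shifted Legendre polynomial, and that the corresponding eigenvalue is $\lambda_k = \frac{1}{k(k+1)}$. Writing $Q_k = P_k^{(0,0)}$ for the $k$-th Legendre polynomial, I would start by setting $e_k'(t) = c_k\, Q_k(2t-1)$ for an as-yet-undetermined constant $c_k$, where $Q_k(2t-1)$ is the $k$-th shifted Legendre polynomial on $[\,0,1]$. Integrating and using the root $e_k(0)=0$, the change of variables $x=2s-1$ gives
\begin{align*}
e_k(t) = c_k\int_0^t Q_k(2s-1)\,ds = \frac{c_k}{2}\int_{-1}^{2t-1}Q_k(u)\,du.
\end{align*}

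Next I would identify this antiderivative with a $(\text{-}1,\text{-}1)$-Jacobi polynomial. Applying Definition~\ref{jacobidef} together with the limiting formula (\ref{jacobifact2}) at index $k+1$ yields $P_{k+1}^{(\text{-}1,\text{-}1)}(x) = \frac{k}{2}\int_{-1}^{x}Q_k(u)\,du$, so that $\frac{1}{2}\int_{-1}^{2t-1}Q_k(u)\,du = \frac{1}{k}P_{k+1}^{(\text{-}1,\text{-}1)}(2t-1)$ and hence $e_k(t) = \frac{c_k}{k}\,P_{k+1}^{(\text{-}1,\text{-}1)}(2t-1)$. This already produces the claimed shape of the formula; it remains only to pin down $c_k$.

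The crux is the normalization $\int_0^1 e_k^2\,d\mu = 1$, and the cleanest route avoids any singular boundary-parameter Jacobi norm formula by exploiting the differential equation (\ref{mercerv3}) directly. Since (\ref{mercerv3}) rearranges to $\frac{e_k(t)}{t(1-t)} = -\lambda_k e_k''(t)$, I would write
\begin{align*}
\int_0^1 \frac{e_k(t)^2}{t(1-t)}\,dt = -\lambda_k\int_0^1 e_k(t)\,e_k''(t)\,dt = \lambda_k\int_0^1 \big(e_k'(t)\big)^2\,dt,
\end{align*}
where the integration-by-parts boundary terms vanish because $e_k(0)=e_k(1)=0$. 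Substituting $e_k' = c_k Q_k(2t-1)$ and using the standard Legendre normalization $\int_{-1}^1 Q_k(x)^2\,dx = \frac{2}{2k+1}$ (so that $\int_0^1 Q_k(2t-1)^2\,dt = \frac{1}{2k+1}$) together with $\lambda_k = \frac{1}{k(k+1)}$ collapses the normalization condition to $\frac{c_k^2}{k(k+1)(2k+1)} = 1$, i.e. $c_k = \sqrt{k(k+1)(2k+1)}$ upon taking the positive root. Combining this with the expression from the previous paragraph yields exactly (\ref{explicitformula}).

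Finally I would verify the sign convention: $Q_k$ has positive leading coefficient, hence so does $P_{k+1}^{(\text{-}1,\text{-}1)}$ (its derivative is a positive multiple of $Q_k$), so the choice $c_k>0$ is consistent with the hypothesis that $e_k$ has a positive leading coefficient. I expect the only genuinely delicate point to be the use of (\ref{jacobifact2}) at the parameters $\alpha=\beta=\text{-}1$, where the weight becomes singular; but this limit has already been made rigorous in Lemma~\ref{jacobicheck}. The remaining work is the routine bookkeeping of the normalization constant, and the main device that keeps it elementary is substituting the ODE (\ref{mercerv3}) so as to trade the singular integral $\int_0^1 e_k^2\,d\mu$ for the ordinary Legendre integral $\int_0^1 (e_k')^2\,dt$.
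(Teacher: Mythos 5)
Your argument is correct, and it reaches the normalization constant by a genuinely different route than the paper. Both proofs share the identification step: since $e_k'$ is proportional to the $k$-th shifted Legendre polynomial and $e_k$ vanishes at the endpoints, $e_k$ must be a constant multiple of $P_{k+1}^{(\text{-}1,\text{-}1)}(2t-1)$ (the paper phrases this via the derivative relation $\frac{d}{dx}P_{k+1}^{(\text{-}1,\text{-}1)} = \frac{k}{2}Q_k$ from Lemma~\ref{jacobicheck}, exactly as you do). Where you diverge is in pinning down the constant. The paper starts from the classical weighted norm identity for $P_n^{(\alpha,\beta)}$ with $\alpha,\beta>-1$, changes variables to $[\hspace{0.1mm}0,1]$, and passes to the limit $\alpha,\beta\to-1^{+}$ to evaluate the singular integral $\int_0^1 \frac{1}{t(1-t)}\big(P_n^{(\text{-}1,\text{-}1)}(2t-1)\big)^2\,dt = \frac{1}{2n-1}\frac{n-1}{n}$; this requires justifying an interchange of limit and integral against a weight that blows up at the endpoints. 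You instead substitute the eigenfunction ODE (\ref{mercerv3}) in the form $\frac{e_k(t)}{t(1-t)} = -\lambda_k e_k''(t)$ and integrate by parts, converting the singular $L^2(\mu)$ normalization into the ordinary Legendre integral $\lambda_k\int_0^1 (e_k')^2\,dt$, with boundary terms killed by $e_k(0)=e_k(1)=0$. Together with $\lambda_k = \frac{1}{k(k+1)}$ and $\int_0^1 Q_k(2t-1)^2\,dt = \frac{1}{2k+1}$ this gives $c_k^2 = k(k+1)(2k+1)$, matching the paper's constant (note $\frac{1}{2n-1}\frac{n-1}{n}\big|_{n=k+1} = \frac{1}{2k+1}\frac{k}{k+1}$, whose reciprocal is $\frac{1}{k^2}\,k(k+1)(2k+1)$, so the two computations agree). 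Your version buys a cleaner argument — only polynomials and nonsingular integrals appear, and no limit interchange against a singular weight is needed — at the cost of leaning on the ODE and eigenvalue facts from the proof of Theorem~\ref{waveletthm}; the paper's version is self-contained given the standard Jacobi norm formula and makes the connection to the classical $(\alpha,\beta)$-Jacobi theory more explicit. Your sign check at the end is also correct and necessary, since the normalization only determines $c_k$ up to sign.
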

\begin{proof}
The following identity for $(\alpha, \beta)$-Jacobi polynomials is stated in \cite{JacobiPoly}:
\begin{align*}
\int_{-1}^{1}(1-x)^{\alpha}(1+x)^{\beta}\big(P_{n}^{(\alpha,\beta)}(x)\big)^{2}\,dx
= \frac{2^{\alpha + \beta + 1}}{2n+\alpha+\beta+1}\frac{\Gamma(n+\alpha+1)\,\Gamma(n+\beta+1)}{\Gamma(n+\alpha+\beta+1)\,n!}\,,
\end{align*}
for $n\geq 1$ and $\alpha, \beta > -1$. Applying the change of variables, $t := \frac{1}{2}(x+1)$, we have
\begin{align*}
\int_{0}^{1}t^{\beta}(1-t)^{\alpha}\big(P_{n}^{(\alpha,\beta)}(2t-1)\big)^{2}dt
= \frac{1}{2n+\alpha+\beta+1}\frac{\Gamma(n+\alpha+1)\,\Gamma(n+\beta+1)}{\Gamma(n+\alpha+\beta+1)\,n!}\,,
\end{align*}
for $n\geq 1$ and $\alpha, \beta > -1$. By definition \ref{jacobidef}, taking the limit $\alpha, \beta \rightarrow -1^{+}$ will yield
\begin{align*}
\int_{0}^{1}\frac{1}{t(1-t)}\left(P_{n}^{(\text{-}1,\text{-}1)}(2t-1)\right)^{2}dt
& = \frac{1}{2n-1}\frac{(n-1)!\,(n-1)!}{n!\,(n-2)!}\\
& = \frac{1}{2n-1}\frac{n-1}{n}\,,\hspace{5mm}\text{for all}\hspace{2mm} n\geq 2.
\end{align*}
Therefore by setting $k := n - 1$, we have
\begin{align*}
\int_{0}^{1}\frac{1}{t(1-t)}\left(\frac{1}{k}\sqrt{k(k+1)(2k+1)}\,P_{n}^{(\text{-}1,\text{-}1)}(2t-1)\right)^{2}\, dt
= 1, \hspace{5mm}\text{for all}\hspace{1.25mm} k \geq 1.
\end{align*}
Recall that $e_{k}^{\prime}$ is proportional to the $k$-th shifted Legendre polynomial $P_{k}^{(0,0)}(2t-1)$.
Similarly, we saw in the proof of Lemma \ref{jacobicheck} that the derivative of $P_{k+1}^{(\text{-}1,\text{-}1)}$ is $\frac{k}{2}\hspace{0.25mm}P_{k}^{(0,0)}$.
As $e_{k}$ and $P_{k+1}^{(\text{-}1,\text{-}1)}$ are zero at their respective endpoints, we have that each $e_{k}$ must be proportional to $P_{k+1}^{(\text{-}1,\text{-}1)}(2t-1)$. The result now follows from the above calculations.
\end{proof}\medbreak\noindent
Having identified an explicit formula for the eigenfunctions $\{e_k\}$ in (\ref{explicitformula}), we shall now describe two methodologies for computing the
Jacobi-like polynomials $\big\{P_{k}^{(\text{-}1,\text{-}1)}\big\}$.\smallbreak\noindent
The first approach is to use the three-term recurrence relation in the theorem below.\medbreak
\begin{theorem}[Recurrence relation for $(\text{-}1,\text{-}1)$-Jacobi polynomials]
For $n\geq 2$\hspace{0.25mm},
\begin{align}\label{jacobirecurrence}
n(n+2)P_{n+2}^{(\text{-}1,\text{-}1)}(x) & = (n+1)(2n+1)\hspace{0.25mm}xP_{n+1}^{(\text{-}1,\text{-}1)}(x) - n(n+1)P_{n}^{(\text{-}1,\text{-}1)}(x)\hspace{0.25mm},
\end{align}
where the initial polynomials are given by
\begin{align*}
P_{2}^{(\text{-}1,\text{-}1)}(x) & = \frac{1}{4}\hspace{0.25mm}(x-1)(x+1)\hspace{0.25mm},\\[3pt]
P_{3}^{(\text{-}1,\text{-}1)}(x) & = \frac{1}{2}\hspace{0.25mm}x(x-1)(x+1)\hspace{0.25mm}.
\end{align*}
\end{theorem}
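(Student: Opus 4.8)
The plan is to obtain the recurrence by passing to the limit $\alpha,\beta \to -1^{+}$ in the classical three-term recurrence for genuine Jacobi polynomials, and to settle the two base cases directly from the integral representation (\ref{jacobifact2}).

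First I would recall from \cite{JacobiPoly} the standard recurrence $P_{m+1}^{(\alpha,\beta)}(x) = (A_m x + B_m)P_m^{(\alpha,\beta)}(x) - C_m P_{m-1}^{(\alpha,\beta)}(x)$, valid for $\alpha,\beta > -1$, whose coefficients $A_m, B_m, C_m$ are explicit rational functions of $m,\alpha,\beta$. The crucial structural observation is that the middle coefficient satisfies $B_m \propto \alpha^2 - \beta^2$. By Definition \ref{jacobidef} together with Lemma \ref{jacobicheck}, the limit defining $P_k^{(\text{-}1,\text{-}1)}$ exists and is independent of the path along which $(\alpha,\beta) \to (-1,-1)$, so I am free to evaluate everything along the diagonal $\alpha = \beta$, on which $B_m$ vanishes identically. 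This removes the term that would otherwise be the source of an indeterminate form as the parameters approach $-1$.

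Next I would compute the limits of $A_m$ and $C_m$ along $\alpha = \beta \to -1^{+}$. A short computation shows that for $m \geq 2$ the relevant denominators stay bounded away from zero, yielding the finite limits $A_m \to \tfrac{m(2m-1)}{(m+1)(m-1)}$ and $C_m \to \tfrac{m}{m+1}$. Passing to the limit in the recurrence, using the pointwise convergence $P_k^{(\alpha,\alpha)} \to P_k^{(\text{-}1,\text{-}1)}$ supplied by Lemma \ref{jacobicheck}, gives $P_{m+1}^{(\text{-}1,\text{-}1)} = \tfrac{m(2m-1)}{(m+1)(m-1)}\,x\,P_m^{(\text{-}1,\text{-}1)} - \tfrac{m}{m+1}P_{m-1}^{(\text{-}1,\text{-}1)}$, valid for $m \geq 3$ (so that $P_{m-1}^{(\text{-}1,\text{-}1)}$ is itself defined by Definition \ref{jacobidef}). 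Reindexing by $m = n+1$ and clearing denominators recovers exactly (\ref{jacobirecurrence}) for $n \geq 2$.

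Finally, for the two initial polynomials I would use the integral formula (\ref{jacobifact2}): since $P_k^{(\text{-}1,\text{-}1)}(x) = \tfrac{k-1}{2}\int_{-1}^{x}Q_{k-1}(u)\,du$ with $Q_k = P_k^{(0,0)}$ the Legendre polynomials, substituting $Q_1(x) = x$ and $Q_2(x) = \tfrac{1}{2}(3x^2 - 1)$ and integrating gives $P_2^{(\text{-}1,\text{-}1)}(x) = \tfrac14(x-1)(x+1)$ and $P_3^{(\text{-}1,\text{-}1)}(x) = \tfrac12 x(x-1)(x+1)$. I expect the only genuine subtlety to be the justification that the limit may be taken along the diagonal and termwise in the recurrence; this is precisely what Lemma \ref{jacobicheck} provides (path-independence together with uniform, hence pointwise, convergence to a polynomial), so the potential blow-up of individual coefficients is circumvented rather than estimated.
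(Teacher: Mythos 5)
Your argument is correct and follows essentially the same route as the paper: both pass to the limit $\alpha,\beta\to-1^{+}$ in the classical three-term Jacobi recurrence (justified by Definition \ref{jacobidef} and Lemma \ref{jacobicheck}), observe that the $\alpha^{2}-\beta^{2}$ term disappears, and reindex to obtain (\ref{jacobirecurrence}) for $n\geq 2$; your limiting coefficients $A_m\to\frac{m(2m-1)}{(m+1)(m-1)}$ and $C_m\to\frac{m}{m+1}$ agree with the paper's after clearing denominators. The only cosmetic differences are that you restrict to the diagonal $\alpha=\beta$ (harmless but unnecessary, since the middle term already tends to zero along any path for $m\geq 2$) and that you compute the two base cases from the integral formula (\ref{jacobifact2}) rather than from the Rodrigues-type formula the paper uses.
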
\vspace{-1.5mm}
\begin{proof}
The below recurrence relation for Jacobi polynomials is presented in \cite{JacobiPoly},
\begin{align*}
& 2(k+1)(k+\alpha+\beta + 1)(2k+\alpha+\beta)P_{k+1}^{(\alpha,\beta)}(x)\\[3pt]
&\hspace{5mm} = (2k + \alpha + \beta + 1)\hspace{-0.35mm}\left((2k + \alpha + \beta)(2k + \alpha + \beta + 2)x + \alpha^{2} - \beta^{2}\right)P_{k}^{(\alpha,\beta)}(x)\\[3pt]
&\hspace{8.5mm} - 2(k+\alpha)(k+\beta)(2k+\alpha+\beta+2)\,P_{k-1}^{(\alpha,\beta)}(x)\hspace{0.25mm},
\end{align*}
for $k\geq 1$ and $\alpha, \beta > -1$. By definition \ref{jacobidef}, it is possible to take the limit $\alpha,\beta\rightarrow 1^{+}$
provided that $k\geq 3$. Therefore, taking this limit and setting $n = k - 1 \geq 2$ produces
\begin{align*}
4n^{2}(n+2)P_{n+2}^{(\text{-}1,\text{-}1)}(x) = 4n(n + 1)(2n + 1)xP_{n+1}^{(\text{-}1,\text{-}1)}(x) - 4n^{2}(n+1)\,P_{n}^{(\text{-}1,\text{-}1)}(x)\hspace{0.25mm},
\end{align*}
for $n\geq 2$. Dividing the above by $4n$ gives the required recurrence relation (\ref{jacobirecurrence}).
Finally the below formula, stated in \cite{JacobiPoly}, can be used to compute $P_{2}^{(\text{-}1,\text{-}1)}$ and $P_{3}^{(\text{-}1,\text{-}1)}$:
\begin{align*}
P_{n}^{(\alpha, \beta)}(x) = \frac{(-1)^{n}}{2^{n}n!}\left(1-x\right)^{-\alpha}\left(1+x\right)^{-\beta}\frac{d^{n}}{dx^{n}}\hspace{-0.25mm}\left(\left(1-x\right)^{\alpha + n}\left(1+x\right)^{\beta + n}\right),\hspace{3mm}\text{for}\,\, n\geq 2\hspace{0.25mm}.
\end{align*}
As before we take $\alpha,\beta\rightarrow 1^{+}$ in the above to obtain an explicit formula for $P_{n}^{(\text{-}1,\text{-}1)}$.
\end{proof}\smallbreak\noindent
In addition to computing these polynomials via a recurrence relation, it is also
possible to represent each $P_{n}^{(\text{-}1,\text{-}1)}$ as the difference of two (rescaled) Legendre polynomials.
Since the Legendre polynomials already have efficient implementations in the majority
of high-level programming languages, this second approach is particularly appealing.\medbreak

\begin{theorem}[Relationship between the Jacobi-like and Legendre polynomials]
For $n\geq 1$, we have
\begin{align*}
P_{n+1}^{(\text{-}1,\text{-}1)}(x) = \frac{n}{4n+2}\hspace{0.25mm}\big(Q_{n+1}(x) - Q_{n-1}(x)\big),
\end{align*}
where $Q_{k}$ denotes the $k$-th degree Legendre polynomial defined on $[-1,1]$.
\end{theorem}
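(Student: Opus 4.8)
The plan is to prove the identity by anti-differentiation: I would show that both sides are polynomials sharing the same derivative and the same value at a single point, and conclude they are identical. The two ingredients I would draw on have already been recorded earlier in the paper. First, from the proof of Theorem \ref{explicitformulathm} (via Lemma \ref{jacobicheck}) we know that the derivative of $P_{n+1}^{(\text{-}1,\text{-}1)}$ is $\frac{n}{2}\hspace{0.25mm}Q_{n}$, since $Q_{n} = P_{n}^{(0,0)}$. Second, because $P_{n+1}^{(\text{-}1,\text{-}1)}$ corresponds to an element with respect to the weight $\frac{1}{(1-x)(1+x)}$, it vanishes at both endpoints, so in particular $P_{n+1}^{(\text{-}1,\text{-}1)}(1) = 0$.

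Next I would invoke the classical derivative recurrence for the Legendre polynomials,
\begin{align*}
Q_{n+1}^{\prime}(x) - Q_{n-1}^{\prime}(x) = (2n+1)\hspace{0.25mm}Q_{n}(x),
\end{align*}
which follows from Bonnet's three-term recurrence and can be cited from \cite{JacobiPoly} or \cite{ClassicalTheory}. Differentiating the proposed right-hand side then gives
\begin{align*}
\frac{d}{dx}\left[\frac{n}{4n+2}\hspace{0.25mm}\big(Q_{n+1}(x) - Q_{n-1}(x)\big)\right]
= \frac{n}{4n+2}\hspace{0.25mm}(2n+1)\hspace{0.25mm}Q_{n}(x)
= \frac{n}{2}\hspace{0.25mm}Q_{n}(x),
\end{align*}
which is exactly the derivative of $P_{n+1}^{(\text{-}1,\text{-}1)}$ recalled above.

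I would then fix the integration constant by evaluating at $x=1$. Using the standard normalization $Q_{k}(1) = 1$ for every $k$, the right-hand side at $x=1$ equals $\frac{n}{4n+2}\hspace{0.25mm}(1-1) = 0$, while the left-hand side is $P_{n+1}^{(\text{-}1,\text{-}1)}(1) = 0$. Since two polynomials with equal derivatives that agree at one point must coincide everywhere, the identity holds for all $x\in[-1,1]$ and all $n\geq 1$.

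I do not expect a genuine obstacle here, as the argument is essentially ``differentiate, match, and fix a constant.'' The only points requiring care are bookkeeping ones: confirming the precise form and constant of the Legendre derivative recurrence, keeping the index shift $n \leftrightarrow n+1$ consistent with the earlier statement that $\frac{d}{dx}P_{k+1}^{(\text{-}1,\text{-}1)} = \frac{k}{2}\hspace{0.25mm}Q_{k}$, and using the correct endpoint normalization $Q_{k}(1)=1$ so that the constant comes out as $\frac{n}{4n+2}$ rather than some other scalar. A quick sanity check at $n=1,2$ (where $P_{2}^{(\text{-}1,\text{-}1)}$ and $P_{3}^{(\text{-}1,\text{-}1)}$ are given explicitly) confirms the constant is correct.
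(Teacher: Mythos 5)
Your proof is correct, but it takes a genuinely different route from the paper. The paper differentiates the three-term recurrence (\ref{jacobirecurrence}) for the $(\text{-}1,\text{-}1)$-Jacobi polynomials, substitutes $\frac{d}{dx}P_{k+1}^{(\text{-}1,\text{-}1)} = \frac{k}{2}\hspace{0.25mm}Q_{k}$ throughout, and then kills the leftover terms using Bonnet's recurrence $(n+1)Q_{n+1}(x) = (2n+1)\hspace{0.25mm}xQ_{n}(x) - nQ_{n-1}(x)$. You instead bypass the Jacobi recurrence entirely: you match derivatives using the classical Legendre derivative identity $Q_{n+1}^{\prime} - Q_{n-1}^{\prime} = (2n+1)\hspace{0.25mm}Q_{n}$ and then pin down the constant of integration at $x=1$ via $Q_{k}(1)=1$ and the vanishing of $P_{n+1}^{(\text{-}1,\text{-}1)}$ at the endpoints. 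Both arguments rest on the same two facts recorded earlier in the paper (the derivative formula from Lemma \ref{jacobicheck} and the endpoint roots), and each invokes one standard Legendre identity. Your version is arguably cleaner and has a small advantage: the recurrence (\ref{jacobirecurrence}) is only stated for $n\geq 2$, so the paper's differentiation argument strictly covers $n\geq 2$ and leaves $n=1$ to a separate (easy but unmentioned) check, whereas your anti-differentiation argument handles all $n\geq 1$ uniformly. What the paper's approach buys in exchange is that it reuses machinery already on the page (the recurrence it has just proved) rather than importing an additional classical identity.
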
\medbreak
\begin{proof}
Recall that $\frac{d}{dx}\big(P^{(\text{-}1,\text{-}1)}_{n+1}\hspace{0.25mm}\big) = \frac{n}{2}P^{(0,0)}_{n}$ for $n\geq 1$, where $P^{(0,0)}_{n} (= Q_{n})$ is the $n$-th degree Legendre polynomial. Therefore differentiating both sides of (\ref{jacobirecurrence}) yields
\begin{align*}
\frac{1}{2}\hspace{0.25mm}n(n+1)(n+2)\hspace{0.25mm}Q_{n+1}(x) & = (n+1)(2n+1)P_{n+1}^{(\text{-}1,\text{-}1)}(x) + \frac{1}{2}\hspace{0.25mm}n(n+1)(2n+1)\hspace{0.25mm}x\hspace{0.25mm}Q_{n}(x) \\
&\hspace{5mm} - \frac{1}{2}\hspace{0.25mm}n(n-1)(n+1)\hspace{0.25mm}Q_{n-1}(x)\hspace{0.25mm}.
\end{align*}
Hence by simplifying and rearranging the above, we have that for $n\geq 1$,
\begin{align*}
(2n+1)P_{n+1}^{(\text{-}1,\text{-}1)}(x) & =  \frac{1}{2}\hspace{0.25mm}n\hspace{0.25mm}\big(Q_{n+1}(x) - Q_{n-1}(x)\big)\\
&\hspace{5mm} + \frac{1}{2}\hspace{0.25mm}n\hspace{0.25mm}\big((n+1)\hspace{0.25mm}Q_{n+1}(x) - (2n+1)\hspace{0.25mm}x\hspace{0.25mm}Q_{n}(x)  + n\hspace{0.25mm}Q_{n-1}(x)\big).
\end{align*}
We see the last term is zero by a recurrence relation for Legendre polynomials \cite{ClassicalTheory}.
\end{proof}\medbreak\noindent
In addition to viewing the polynomials $\{e_{k}\}$ as orthogonal with respect to the
weight function $w(x) := \frac{1}{x(1-x)}$\hspace{0.25mm}, we can characterize them via their iterated time integrals.
In particular, for $1\leq k\leq n -1$\hspace{0.25mm}, it follows from the integration by parts formula that
\begin{align*}
\hspace{10mm}\int_{0<s_{1}<\cdots<s_{k}<1}e_{n}(s_{1})\,ds_{1}\cdots \,ds_{k}
& = \int_{0}^{1}e_{n}(s)\,d\hspace{-0.25mm}\left(\frac{1}{k!}s^{k}\right)\\
& = \frac{1}{(k-1)!}\int_{0}^{1}s^{k-1}e_{n}(s)\,ds\\
& = -\frac{1}{k!}\int_{0}^{1}s^{k}e_{n}^{\prime}(s)\,ds\\[4pt]
& = 0\hspace{0.25mm}, \hspace{19.5mm}(\text{by the orthogonality of $e_{n}^{\prime}$}).
\end{align*}
Hence for $k\geq 1$, $e_{k}$ is a polynomial with degree $k+1$ that has roots at $0$ and $1$ as
well as $k-1$ trivial iterated integrals against time. By additionally specifying the $k$-th iterated
time integral, it is then possible to characterize the $k$-th polynomial $e_{k}$.\medbreak\noindent
To conclude this section, we will address the relationship between the orthogonal
Jacobi-like polynomials $\{e_{k}\}$ and the Alpert-Rokhlin wavelets given in definition \ref{ARWavelet}.
Since each $e_{k}^{\prime}$ is proportional to the $k$-th shifted Legendre polynomial, the family of
polynomials $\{e_{k}^{\prime}\}$ is orthogonal with respect to the standard $L^{2}([\hspace{0.1mm}0,1])$ inner product.
This orthogonality is exactly what is needed to satisfy the conditions (\ref{momentcondition2}) and (\ref{momentcondition3}).
Hence for any $q\geq 1$ there exists an Alpert-Rokhlin mother function of order $q$ that is a
piecewise polynomial where both pieces can be rescaled and translated to give $e_{q-1}^{\prime}$.

\section{Applications to SDEs}
Consider the Stratonovich SDE on the interval $[\hspace{0.1mm}0,T]$
\begin{align}\label{ogsde}
dy_{t} & = f_{0}(y_{t})\,dt + f_{1}(y_{t})\circ dW_{t}\hspace{0.125mm},\\
y_{0} & = \xi,\nonumber
\end{align}
where $\xi\in\mathbb{R}^{e}$ and $f_{i}$ denote bounded $C^{\infty}$ vector fields on $\mathbb{R}^{e}$ with bounded derivatives.
It then follows from the standard Picard iteration argument that there exists a unique
strong solution $y$ to (\ref{ogsde}). An important tool in the numerical analysis of this solution
is the stochastic Taylor expansion (see chapter 5 of \cite{KloePlat} for a comprehensive review).
For the purposes of this paper, we only require the following specific Taylor expansion.\medbreak
\begin{theorem}[High order Stratonovich-Taylor expansion]\label{stochtaylorthm}
Let $y$ denote the
unique strong solution to (\ref{ogsde}) and let $0\leq s \leq t$. Then $y_{t}$ can be expanded as follows:
\begin{align}\label{stochtaylorexp}
y_{t} & = y_{s} + f_{0}(y_{s})\hspace{0.25mm}h + f_{1}(y_{s})\hspace{0.25mm}W_{s,t} + \frac{1}{2}\hspace{0.25mm}f_{1}^{\prime}(y_{s})f_{1}(y_{s})\hspace{0.25mm}W_{s,t}^{2} + \frac{1}{2}\hspace{0.25mm}f_{0}^{\prime}(y_{s})f_{0}(y_{s})\hspace{0.25mm}h^{2}\\
&\hspace{3.5mm} + f_{0}^{\prime}(y_{s})f_{1}(y_{s})\int_{s}^{t}\int_{s}^{u}\circ\, dW_{v}\,du + f_{1}^{\prime}(y_{s})f_{0}(y_{s})\int_{s}^{t}\int_{s}^{u}\,dv\circ\, dW_{u}\nonumber \\
&\hspace{3.5mm} + \frac{1}{6}\hspace{0.25mm}\big(\hspace{0.25mm}f_{1}^{\prime}(y_{s})f_{1}^{\prime}(y_{s})f_{1}(y_{s}) + f_{1}^{\prime\prime}(y_{s})(f_{1}(y_{s}), f_{1}(y_{s}))\big)W_{s,t}^{3}\nonumber \\
&\hspace{3.5mm} + \big(\hspace{0.25mm}f_{0}^{\prime}(y_{s})f_{1}^{\prime}(y_{s})f_{1}(y_{s}) + f_{0}^{\prime\prime}(y_{s})(f_{1}(y_{s}), f_{1}(y_{s}))\big)\int_{s}^{t}\int_{s}^{u}\int_{s}^{v}\circ\,dW_{r}\circ\, dW_{v}\, du\nonumber \\
&\hspace{3.5mm} + \big(\hspace{0.25mm}f_{1}^{\prime}(y_{s})f_{0}^{\prime}(y_{s})f_{1}(y_{s}) + f_{1}^{\prime\prime}(y_{s})(f_{0}(y_{s}), f_{1}(y_{s}))\big)\int_{s}^{t}\int_{s}^{u}\int_{s}^{v}\circ\,dW_{r}\, dv\circ dW_{u}\nonumber \\
&\hspace{3.5mm} + \big(\hspace{0.25mm}f_{1}^{\prime}(y_{s})f_{1}^{\prime}(y_{s})f_{0}(y_{s}) + f_{1}^{\prime\prime}(y_{s})(f_{1}(y_{s}), f_{0}(y_{s}))\big)\int_{s}^{t}\int_{s}^{u}\int_{s}^{v}\,dr \circ dW_{v}\circ dW_{u}\nonumber \\
&\hspace{3.5mm} + \frac{1}{24}\hspace{0.25mm}\big(\hspace{0.25mm}f_{1}^{\prime}(y_{s})f_{1}^{\prime}(y_{s})f_{1}^{\prime}(y_{s})f_{1}(y_{s}) + f_{1}^{\prime}(y_{s})f_{1}^{\prime\prime}(y_{s})(f_{1}(y_{s}), f_{1}(y_{s}))\nonumber \\
&\hspace{14.6mm} +\hspace{0.25mm} 3\hspace{0.25mm}f_{1}^{\prime\prime}(y_{s})(f_{1}^{\prime}(y_{s})f_{1}(y_{s}),f_{1}(y_{s})) + f_{1}^{\prime\prime\prime}(y_{s})(f_{1}(y_{s}),f_{1}(y_{s}),f_{1}(y_{s})) \big)\hspace{0.25mm}W_{s,t}^{4}\nonumber \\[3pt]
&\hspace{3.5mm} + R(h, y_{s}),\nonumber
\end{align}
where $h:=t-s$ and the remainder term has the following uniform estimate for $h < 1$,
\begin{align}\label{highorderestimate}
\sup_{y_{s}\in\mathbb{R}^{e}}\left\|R(h, y_{s})\right\|_{L^{2}(\mathbb{P})} \leq C\, h^{\frac{5}{2}},
\end{align}
where the constant $C > 0$ depends only on the vector fields of the differential equation.
\end{theorem}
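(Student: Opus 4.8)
The plan is to generate the expansion by iterating the Stratonovich chain rule and then to control the tail with moment bounds on iterated integrals. Writing $L_{0}g := (\nabla g)\,f_{0}$ and $L_{1}g := (\nabla g)\,f_{1}$ for the first-order differential operators attached to the two vector fields, the defining feature of Stratonovich calculus is that it obeys the ordinary chain rule, so for any smooth $g$ and $s\leq u\leq t$,
\[
g(y_{u}) = g(y_{s}) + \int_{s}^{u}(L_{0}g)(y_{r})\,dr + \int_{s}^{u}(L_{1}g)(y_{r})\circ dW_{r}.
\]
Applying this with $g=\mathrm{id}$ gives $y_{t} = y_{s} + \int_{s}^{t}f_{0}(y_{u})\,du + \int_{s}^{t}f_{1}(y_{u})\circ dW_{u}$, and the whole expansion is obtained by repeatedly substituting the same identity for each surviving integrand $f_{i}(y_{\cdot})$, $(L_{j}f_{i})(y_{\cdot})$, and so on. Each substitution splits an integrand into a frozen value at $y_{s}$, which produces an explicit term, plus two longer iterated integrals, one against $dr$ and one against $\circ dW$.

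To organise the bookkeeping I would grade each iterated integral indexed by a word $\alpha=(i_{1},\dots,i_{k})$ in the letters $\{0,1\}$ by its $L^{2}(\mathbb{P})$ order $\mathrm{ord}(\alpha) := \tfrac{1}{2}\#\{j:i_{j}=1\} + \#\{j:i_{j}=0\}$, reflecting that $\|W_{s,t}\|_{L^{2}(\mathbb{P})}=h^{1/2}$ and $\int_{s}^{t}du=h$. Since each substitution raises the order of a word by at least $\tfrac{1}{2}$, I would keep expanding exactly those words of order $\leq 2$ and stop as soon as a word has order $>2$ (equivalently $\geq\tfrac{5}{2}$, as orders lie in $\tfrac{1}{2}\mathbb{Z}$). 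The frozen coefficients are then the elementary differentials $(L_{i_{k}}\cdots L_{i_{1}}\,\mathrm{id})(y_{s})$; for the pure-Brownian words the iterated Stratonovich integral collapses to $\int_{s<u_{1}<\cdots<u_{m}<t}\circ dW_{u_{1}}\cdots\circ dW_{u_{m}}=\tfrac{1}{m!}W_{s,t}^{\,m}$, which is precisely what yields the coefficients $\tfrac{1}{2},\tfrac{1}{6},\tfrac{1}{24}$ on $W_{s,t}^{2},W_{s,t}^{3},W_{s,t}^{4}$. A short computation of $L_{1}L_{1}\,\mathrm{id}=f_{1}'f_{1}$ and $L_{1}L_{1}L_{1}\,\mathrm{id}=f_{1}'f_{1}'f_{1}+f_{1}''(f_{1},f_{1})$, together with the analogous mixed compositions, matches each displayed coefficient in (\ref{stochtaylorexp}); checking that every word of order $\leq 2$ appears with the stated coefficient and that no others do is routine but must be carried out carefully.

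The remainder $R(h,y_{s})$ collects the finitely many tail integrals whose words $\beta$ satisfy $\mathrm{ord}(\beta)\geq\tfrac{5}{2}$ and whose integrands $(L_{i_{k}}\cdots L_{i_{1}}\,\mathrm{id})(y_{\cdot})$ are still evaluated along the path rather than frozen. The estimate (\ref{highorderestimate}) then reduces to a single uniform bound: for a word $\beta$ and a bounded adapted integrand $g$,
\[
\Big\|\int_{s<u_{1}<\cdots<u_{k}<t} g(y_{u_{1}})\,d^{i_{1}}u_{1}\cdots d^{i_{k}}u_{k}\Big\|_{L^{2}(\mathbb{P})} \leq C_{\beta}\,\|g\|_{\infty}\,h^{\,\mathrm{ord}(\beta)}.
\]
I would prove this by first converting each Stratonovich iterated integral into Itô form, noting that the It\^{o}--Stratonovich correction replaces an adjacent $\circ dW\,\circ dW$ by $\tfrac{1}{2}\,dr$ and hence preserves the order grading, and then inducting on word length, using the Itô isometry at each Brownian step and Cauchy--Schwarz at each time step. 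Because $f_{0},f_{1}\in C_{b}^{\infty}$, every elementary differential appearing as an integrand is a bounded vector field, so $\|g\|_{\infty}$ is finite and independent of $y_{s}$; summing the finitely many tail terms and using $h<1$ to absorb the higher powers into $h^{5/2}$ then gives (\ref{highorderestimate}).

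The main obstacle is this remainder analysis. The individual ingredients — the Stratonovich-to-Itô conversion, the uniform boundedness of the elementary differentials, and the iterated moment bound — are each standard, but the genuinely delicate part is the combinatorial verification that the truncation is clean: that the stopping rule leaves \emph{only} words of order $\geq\tfrac{5}{2}$, that every word of order $\leq 2$ has been correctly identified and matched to its displayed coefficient, and that the order scaling survives the conversion to Itô form. It is this bookkeeping, rather than any single inequality, that requires the most care.
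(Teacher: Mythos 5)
The paper does not actually prove Theorem~\ref{stochtaylorthm}: it is quoted as a known result, with the reader referred to Chapter~5 of the Kloeden--Platen reference for the general theory of stochastic Taylor expansions. Your proposal reconstructs that standard proof, and it is essentially correct: iterating the Stratonovich chain rule over the hierarchical set of words of mean-square order at most $2$ produces exactly the eleven displayed terms, the tail then consists of finitely many iterated integrals indexed by words of order at least $\tfrac{5}{2}$ with integrands that are bounded elementary differentials (since $f_0,f_1$ are $C^\infty$ and bounded with bounded derivatives), and the moment bound $\|I_\beta[g]\|_{L^2(\mathbb{P})}\leq C_\beta\|g\|_\infty h^{\operatorname{ord}(\beta)}$ obtained by converting to It\^{o} form and inducting with the It\^{o} isometry and Cauchy--Schwarz is precisely the lemma (Kloeden--Platen, Lemma 5.7.5) that delivers (\ref{highorderestimate}) uniformly in $y_s$. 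One small slip to fix in the write-up: with your convention that $u_1$ is the innermost integration variable, the frozen coefficient attached to the word $(i_1,\dots,i_k)$ is $L_{i_1}L_{i_2}\cdots L_{i_k}\,\mathrm{id}$, not $L_{i_k}\cdots L_{i_1}\,\mathrm{id}$; for instance the word with inner $\circ\hspace{0.25mm}dW$ and outer $du$ must receive $L_1 f_0=f_0^{\prime}f_1$, matching the term $f_{0}^{\prime}(y_{s})f_{1}(y_{s})\int_{s}^{t}\int_{s}^{u}\circ\, dW_{v}\,du$ in (\ref{stochtaylorexp}), whereas your reversed formula would give $f_1^{\prime}f_0$. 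This is a transposition of notation rather than a gap in the method, and it would be caught in the coefficient-matching step you already flag as requiring care.
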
\medbreak\noindent
From a numerical perspective, the most challenging terms presented in (\ref{stochtaylorexp}) are those
that involve non-trivial third order iterated integrals of Brownian motion and time.
Moreover, the most significant source of discretization error that high order numerical
methods will experience is generally due to approximating these stochastic integrals.
By representing Brownian motion as a (random) polynomial plus independent noise,
we shall derive a new optimal and unbiased estimator for these third order integrals.\medbreak
\begin{theorem}\label{polyplusnoise}
Let $W$ denote a standard real-valued Brownian motion on $[\hspace{0.1mm}0,1]$.
Let $W^{n}$ be the unique $n$-th degree random polynomial with a root at $0$ and satisfying
\begin{align*}
\int_{0}^{1}u^{k}\,dW_{u}^{n} & = \int_{0}^{1}u^{k}\,dW_{u}\hspace{0.125mm},\hspace{3mm}\text{for}\hspace{2mm}k = 0, 1, \cdots, n - 1.
\end{align*}
Then\hspace{0.25mm} $W =\hspace{0.25mm} W^{n} + Z^{n}$\hspace{0.25mm}, where\hspace{0.25mm} $Z^{n}$ is a centered Gaussian process independent of\hspace{0.5mm} $W^{n}$.\medbreak\noindent
Furthermore, $Z^{n}$ has the following covariance function:
\begin{align*}
\cov(Z^{n}_{s}, Z^{n}_{t}) = K_{B}(s,t) - \sum_{k=1}^{n-1}\lambda_{k}e_{k}(s)e_{k}(t), \hspace{3mm}\text{for}\hspace{2mm}s,t\in[\hspace{0.1mm}0,1],
\end{align*}
where $K_{B}$ denotes the standard Brownian bridge covariance function and $\{\lambda_{k}\}$, $\{e_{k}\}$
are the eigenvalues and eigenfunctions that were defined in the proof of Theorem \ref{waveletthm}.
\end{theorem}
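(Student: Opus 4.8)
The plan is to reduce everything to the polynomial expansion of Theorem \ref{bmversion} together with the spectral data produced in the proof of Theorem \ref{waveletthm}. The first and only genuinely non-routine task is to identify the polynomial $W^{n}$ specified here by the $n$ moment-matching constraints with the conditional expectation studied in Theorem \ref{polyapproxthm}. To do this I would apply integration by parts to each constraint: for $k\geq 1$,
\[
\int_{0}^{1}u^{k}\,dW_{u}^{n} = W_{1}^{n} - k\int_{0}^{1}u^{k-1}W_{u}^{n}\,du,
\]
and likewise for $W$ (with $k=0$ giving simply the increment). Hence the system $\big\{\int_{0}^{1}u^{k}\,dW^{n} = \int_{0}^{1}u^{k}\,dW\big\}_{k=0}^{n-1}$ is equivalent to matching $W_{1}$ together with the time integrals $\int_{0}^{1}u^{m}W_{u}\,du$ for $m=0,\dots,n-2$. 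By the invertible relationship (\ref{itintrelate}) between these integrals and the coefficients $\{I_{k}\}$, this is in turn equivalent to matching $W_{1},I_{1},\dots,I_{n-1}$, so by Theorem \ref{polyapproxthm} and the uniqueness already established there we conclude, as in (\ref{brownianpoly}), that $W^{n} = W_{1}e_{0} + \sum_{k=1}^{n-1}I_{k}e_{k}$.

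Given this identification, I would set $Z^{n} := W - W^{n}$ and read off from the full expansion (\ref{secpolyapprox}) that $Z^{n} = \sum_{k=n}^{\infty}I_{k}e_{k}$, the tail of the series. The $L^{2}(\mathbb{P})$ convergence of this tail was already established inside the proof of Theorem \ref{waveletthm}, so $Z^{n}$ is a well-defined centered process; being an $L^{2}$-limit of linear functionals of the Gaussian process $W$, it is itself Gaussian.

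For the independence claim, the point is that $W^{n}$ is a function of $\{W_{1}, I_{1},\dots,I_{n-1}\}$ while $Z^{n}$ is a function of $\{I_{n}, I_{n+1},\dots\}$. Theorem \ref{waveletthm} shows the $\{I_{k}\}$ are mutually independent, and Theorem \ref{bmversion} shows they are independent of $W_{1}$; therefore these two blocks of random variables are independent, and hence so are $W^{n}$ and $Z^{n}$. (Equivalently, joint Gaussianity reduces independence to vanishing covariance, which also follows from $\mathbb{E}[I_{i}I_{j}]=\lambda_{i}\delta_{ij}$.)

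Finally, the covariance is a direct computation. Using $Z^{n} = \sum_{k\geq n}I_{k}e_{k}$ and interchanging expectation and summation (justified by the $L^{2}$ convergence above),
\[
\cov(Z^{n}_{s}, Z^{n}_{t}) = \sum_{i,j\geq n}\mathbb{E}[I_{i}I_{j}]\,e_{i}(s)e_{j}(t) = \sum_{k=n}^{\infty}\lambda_{k}e_{k}(s)e_{k}(t),
\]
where I use $\mathbb{E}[I_{i}I_{j}]=\lambda_{i}\delta_{ij}$ from the proof of Theorem \ref{waveletthm}. Subtracting this tail from Mercer's expansion (\ref{mercer}), namely $K_{B}(s,t)=\sum_{k\geq 1}\lambda_{k}e_{k}(s)e_{k}(t)$, yields the claimed formula $\cov(Z^{n}_{s},Z^{n}_{t})=K_{B}(s,t)-\sum_{k=1}^{n-1}\lambda_{k}e_{k}(s)e_{k}(t)$. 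I expect the main obstacle to be the first step: carefully verifying that the increment/monomial constraints pin down exactly the same polynomial as the iterated-integral constraints of Theorem \ref{polyapproxthm}, i.e. that the integration-by-parts translation is a genuine bijection between the two constraint sets. Once that is in place, the Gaussianity, independence, and covariance all follow structurally from the spectral decomposition.
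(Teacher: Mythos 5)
Your proposal is correct and follows essentially the same route as the paper: identify $W^{n}$ with the polynomial of Theorem \ref{polyapproxthm} via integration by parts, write $Z^{n}=\sum_{k\geq n}I_{k}e_{k}$ as the tail of the expansion (\ref{secpolyapprox}), deduce independence from that of $\{W_{1},I_{1},I_{2},\dots\}$, and obtain the covariance by subtracting the partial sum from Mercer's representation (\ref{mercer}). The only difference is that you spell out the integration-by-parts translation of the moment constraints more explicitly than the paper, which states it in one line.
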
\medbreak
\begin{proof}
It follows from the integration by parts formula that $W^{n}$ matches the
increment and $n-1$ iterated time integrals of Brownian motion that appear in (\ref{nminusoneintegrals}).
Hence $W^{n}$ is also the polynomial defined in Theorem \ref{polyapproxthm} and $W = W^{n} + Z^{n}$ where\vspace{-0.25mm}
\begin{align*}
W^{n} & = W_{1}e_{0} + \sum_{k=1}^{n-1}I_{k}e_{k}\hspace{0.125mm},\\
Z^{n} & = \sum_{k=n}^{\infty}I_{k}e_{k}\hspace{0.125mm}.
\end{align*}
Then by Theorem \ref{waveletthm}, $Z^{n}$ is a centered Gaussian process that is independent of $W^{n}$.
In addition, the covariance function defining $Z^{n}$ can be directly computed as follows:
\begin{align*}
\cov(Z^{n}_{s}, Z^{n}_{t}) & = \cov\left(\sum_{i=n}^{\infty}I_{i}e_{i}(s), \sum_{j=n}^{\infty}I_{j}e_{j}(t)\right)\\
& = \sum_{k=n}^{\infty}\lambda_{k}e_{k}(s)e_{k}(t)\\
& = K_{B}(s,t) - \sum_{k=1}^{n-1}\lambda_{k}e_{k}(s)e_{k}(t),\hspace{3mm}\text{for}\hspace{2mm}s,t\in[\hspace{0.1mm}0,1].
\end{align*}
Note that the final line is achieved using the representation of $K_{B}$ given by (\ref{mercer}).
\end{proof}\medbreak\noindent
The above theorem has an interesting conclusion, namely that there exist unbiased
polynomial approximants of Brownian motion for which the error process can be
independently estimated in an $L^{2}(\mathbb{P})$ sense. In particular, this theorem already has
numerical applications in the case when $n=2$ and motivates the following definitions:\medbreak
\begin{definition}\label{brownianparabola}
The \textbf{standard Brownian parabola} $\,\wideparen{W}$ is the unique quadratic polynomial on $[\hspace{0.1mm}0,1]$ with a root at $0$ and satisfying
\begin{align*}
\wideparen{W}_{1} = W_{1}\hspace{0.125mm}, \hspace{5mm}\int_{0}^{1}\wideparen{W}_{u}\, du = \int_{0}^{1}W_{u}\,du.
\end{align*}
\end{definition}\vspace{-1mm}
\begin{definition}\label{brownianarch}
The \textbf{standard Brownian arch} $\,Z$ is the process $Z := W - \wideparen{W}$.
By Theorem \ref{polyplusnoise}, $Z$ is the centered Gaussian process on $[\hspace{0.1mm}0,1]$ with covariance function
\begin{align*}
K_{Z}\left(s,t\right) = \min(s,t) - st - 3st(1-s)(1-t), \hspace{3mm}\text{for}\hspace{2mm}s,t\in[\hspace{0.1mm}0,1].
\end{align*}
\end{definition}\vspace{-1mm}
\begin{definition}\label{spacetimelevy}
The rescaled \textbf{space-time L\'{e}vy area} of Brownian motion over
an interval $[s,t]$ with length $h$ encodes the signed area of the associated bridge process,
\begin{align*}
H_{s,t} := \frac{1}{h}\int_{s}^{t}W_{s,u} - \frac{u-s}{h}\,W_{s,t}\,du.
\end{align*}
\end{definition}\vspace{-0.5mm}
\begin{remark}
Since $e_{1}(t) = \sqrt{6}\,t(1-t)$, we have that $H_{0,1}$ corresponds to $\frac{\sqrt{6}}{6}I_{1}$ as defined in Theorem \ref{waveletthm}. Thus, $H_{s,t}\sim \mathcal{N}\hspace{-0.25mm}\left(0,\frac{1}{12}h\right)$ and $H_{s,t}$ is independent of $\hspace{0.25mm}W_{s,t}\hspace{0.25mm}$.
\end{remark}\medbreak\noindent
By applying the natural scaling of Brownian motion, one can define the Brownian
parabola and Brownian arch processes over any interval $[s,t]$ with finite size $h = t - s$.
Whilst the Brownian arch can be viewed in a similar light to the Brownian bridge,
there are clear qualitative and quantitative differences in their covariance functions.
In particular, the Brownian arch has less variance at its midpoint compared to most
points in $[s, t]$ $\big($by which we mean that $|\{u\in[s,t] : \var(Z_{u}) \leq \var(Z_{\frac{1}{2}(s+t)})\}| < \frac{1}{2}h\big)$.
This is in contrast to the Brownian bridge, which has most variance at its midpoint.
In fact, the Brownian parabola gives a relatively uniform estimate of the original path.
\vspace{-2.5mm}
\begin{figure}[h]\label{variancediagram}
\centering
\includegraphics[width=0.825\textwidth]{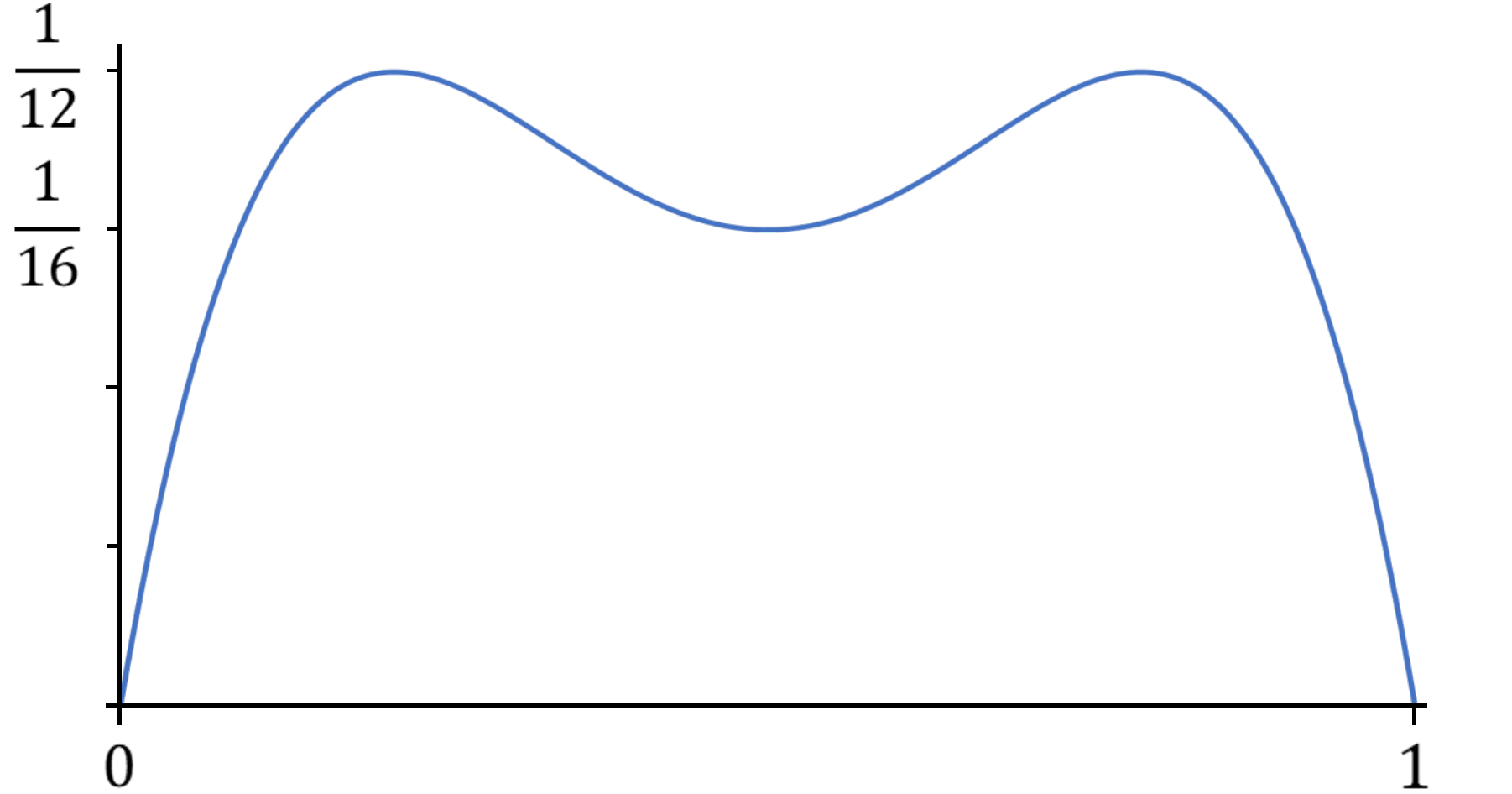}
\caption{Variance profile of the standard Brownian arch.}\vspace{-2.5mm}
\end{figure}\medbreak\noindent
Using these new definitions, we can study the high order integrals appearing in (\ref{stochtaylorexp}).\medbreak
\begin{theorem}[Conditional expectation of a non-trivial Brownian time integral]\vspace{-2.5mm}\label{firstcondthm}
\begin{align}\label{firstcondexp}
\mathbb{E}\hspace{-0.25mm}\left[\hspace{0.25mm}\int_{s}^{t}W_{s,u}^{2}\,du\, \Big|\, W_{s,t}\hspace{0.125mm}, H_{s,t}\right] = \frac{1}{3}hW_{s,t}^{2} + hW_{s,t}H_{s,t} + \frac{6}{5}hH_{s,t}^{2} + \frac{1}{15}h^{2}.
\end{align}
\end{theorem}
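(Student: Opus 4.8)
The plan is to reduce to the unit interval by Brownian scaling and then exploit the parabola-plus-noise decomposition of Theorem \ref{polyplusnoise}. First I would set $\widetilde{W}_{r} := h^{-1/2}W_{s,\,s+rh}$ for $r\in[\hspace{0.1mm}0,1]$, which is again a standard Brownian motion on $[\hspace{0.1mm}0,1]$. Tracking the powers of $h$ in each object, one finds $W_{s,t} = \sqrt{h}\,\widetilde{W}_{1}$, $H_{s,t} = \sqrt{h}\,\widetilde{H}_{0,1}$ and $\int_{s}^{t}W_{s,u}^{2}\,du = h^{2}\int_{0}^{1}\widetilde{W}_{r}^{2}\,dr$, so that the claimed identity is equivalent, after dividing through by $h^{2}$, to the $h=1$ statement
\begin{align*}
\mathbb{E}\!\left[\int_{0}^{1}W_{u}^{2}\,du \,\Big|\, W_{1}, H_{0,1}\right] = \tfrac{1}{3}W_{1}^{2} + W_{1}H_{0,1} + \tfrac{6}{5}H_{0,1}^{2} + \tfrac{1}{15}.
\end{align*}
It therefore suffices to establish this identity on $[\hspace{0.1mm}0,1]$.

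The key observation is that the $\sigma$-algebra generated by $(W_{1}, H_{0,1})$ is exactly that generated by the standard Brownian parabola $\wideparen{W}_{u} = uW_{1} + 6\hspace{0.25mm}u(1-u)H_{0,1}$, since the two coefficients of $\wideparen{W}$ are recoverable from, and are determined by, the pair $(W_{1}, H_{0,1})$. Writing $W = \wideparen{W} + Z$ with $Z$ the Brownian arch, Theorem \ref{polyplusnoise} (the case $n=2$) guarantees that $Z$ is a centered Gaussian process independent of $\wideparen{W}$. Expanding $W_{u}^{2} = \wideparen{W}_{u}^{2} + 2\wideparen{W}_{u}Z_{u} + Z_{u}^{2}$ and taking the conditional expectation, the cross term drops out because $\wideparen{W}_{u}$ is measurable with respect to the conditioning and $\mathbb{E}[Z_{u}\,|\,\wideparen{W}] = \mathbb{E}[Z_{u}] = 0$, while the square term loses its conditioning since $\int_{0}^{1}Z_{u}^{2}\,du$ is independent of $\wideparen{W}$. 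This reduces the problem to
\begin{align*}
\mathbb{E}\!\left[\int_{0}^{1}W_{u}^{2}\,du \,\Big|\, \wideparen{W}\right] = \int_{0}^{1}\wideparen{W}_{u}^{2}\,du + \int_{0}^{1}\var(Z_{u})\,du,
\end{align*}
a sum of two purely deterministic integrals, the Fubini interchange being justified by an elementary $L^{1}$ bound on $\int_{0}^{1}|\wideparen{W}_{u}Z_{u}|\,du$.

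The remaining work is routine polynomial integration. Expanding $\wideparen{W}_{u}^{2} = \big(uW_{1} + 6\hspace{0.25mm}u(1-u)H_{0,1}\big)^{2}$ and integrating the monomials $u^{2}$, $u^{2}(1-u)$ and $u^{2}(1-u)^{2}$ over $[\hspace{0.1mm}0,1]$ produces precisely $\tfrac{1}{3}W_{1}^{2} + W_{1}H_{0,1} + \tfrac{6}{5}H_{0,1}^{2}$, matching the first three terms. For the constant, I would read off $\var(Z_{u}) = K_{Z}(u,u) = u(1-u) - 3\hspace{0.25mm}u^{2}(1-u)^{2}$ from Definition \ref{brownianarch} and integrate to obtain $\tfrac{1}{6} - \tfrac{3}{30} = \tfrac{1}{15}$. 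Combining the two computations yields the $h=1$ identity, and undoing the scaling recovers the theorem. I do not anticipate a genuine obstacle: the only conceptual step is recognising that conditioning on $(W_{1}, H_{0,1})$ is the same as conditioning on the entire parabola and invoking the independence of the arch, after which everything is a short explicit calculation.
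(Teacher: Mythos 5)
Your proposal is correct and follows essentially the same route as the paper: reduce to $[\hspace{0.1mm}0,1]$ by Brownian scaling, write $W = \wideparen{W} + Z$ with $\wideparen{W}$ determined by $(W_{1}, H_{0,1})$ and $Z$ independent of it via Theorem \ref{polyplusnoise}, kill the cross term, and evaluate $\int_{0}^{1}\wideparen{W}_{u}^{2}\,du + \int_{0}^{1}\var(Z_{u})\,du$ explicitly. The computations check out, so there is nothing to add.
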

\begin{proof}
By the natural Brownian scaling it is enough to prove the result on $[\hspace{0.1mm}0,1]$.
Recall that $W = \wideparen{W} + Z$ where the parabola $\wideparen{W}$ is completely determined by $\left(W_{1}, H_{1}\right)$
and $Z$ is independent of $\left(W_{1}, H_{1}\right)$. This leads to a decomposition for the LHS of (\ref{firstcondexp}).
\begin{align*}
\mathbb{E}\hspace{-0.25mm}\left[\hspace{0.25mm}\int_{0}^{1}\hspace{-0.25mm}W_{u}^{2}\,du\, \Big|\, W_{1}, H_{1}\right]
& = \mathbb{E}\hspace{-0.25mm}\left[\hspace{0.25mm}\int_{0}^{1}\left(\hspace{0.25mm}\wideparen{W}_{u}+Z_{u}\right)^{2}\,du\, \Big|\, W_{1}, H_{1}\right]\\[2pt]
& = \mathbb{E}\hspace{-0.25mm}\left[\hspace{0.25mm}\int_{0}^{1}\wideparen{W}_{u}^{2}\,du + 2\int_{0}^{1}\wideparen{W}_{u}\hspace{0.25mm}Z_{u}\,du
+ \int_{0}^{1}Z_{u}^{2}\,du\,\Big|\, W_{1}, H_{1}\right]\\[2pt]
& = \int_{0}^{1}\wideparen{W}_{u}^{2}\,du + 2\int_{0}^{1}\wideparen{W}_{u}\,\mathbb{E}\left[Z_{u}\right]\,du
+ \int_{0}^{1}\mathbb{E}\hspace{-0.25mm}\left[Z_{u}^{2}\hspace{0.25mm}\right]\,du\\[3pt]
& = \int_{0}^{1}\hspace{-1mm}\big(uW_{1} + 6u(1-u)H_{1}\big)^{2}du +\hspace{-0.1mm}\int_{0}^{1}\hspace{-0.75mm}u - u^{2} - 3u^{2}(1-u)^{2}\,du.
\end{align*}
The result now follows by evaluating the above integrals.
\end{proof}\medbreak\noindent
The above theorem has practical applications for SDE simulation as $W_{s,t}$ and $H_{s,t}$ are
independent Gaussian random variables and can be easily generated or approximated.
That said, we should first discuss how the iterated integrals within (\ref{stochtaylorexp}) are connected.
\medbreak
\begin{definition}\label{spacespacetimelevy}
The \textbf{space-space-time L\'{e}vy area} of Brownian motion over
an interval $[s,t]$ is defined as\vspace{-0.5mm}
\begin{align*}
L_{s,t} & := \frac{1}{6}\left(\int_{s}^{t}\int_{s}^{u}\int_{s}^{v}\circ\,dW_{r}\circ\, dW_{v}\, du 
- 2\int_{s}^{t}\int_{s}^{u}\int_{s}^{v}\circ\,dW_{r}\, dv\circ dW_{u}\right. \\
&\hspace{11mm}\left. + \int_{s}^{t}\int_{s}^{u}\int_{s}^{v}\,dr \circ dW_{v}\circ dW_{u}\right).\nonumber
\end{align*}
\end{definition}\noindent
We can interpret $L_{s,t}$ as an area between the processes $\{W_{s,u}\}_{u\in[s,t]}$ and $\{H_{s,u}\}_{u\in[s,t]}$.
Moreover, rough path theory provides an algebraic structure (called the log-signature)
that relates $(W_{s,t}, H_{s,t}, L_{s,t})$ to the iterated integrals of space-time Brownian motion
and ultimately to SDE solutions via the log-ODE method (see \cite{LogSig} for an overview).
For our purposes, it is enough to give formulae relating these L\'{e}vy areas to integrals.\medbreak
\begin{theorem}\label{logsigrelation} Let $H_{s,t}$ and $L_{s,t}$ denote the L\'{e}vy areas of Brownian motion given
by definitions \ref{spacetimelevy} and \ref{spacespacetimelevy} respectively. Then the following integral relationships hold,
\begin{align*}
\int_{s}^{t}\int_{s}^{u}\circ\, dW_{v}\,du & = \frac{1}{2}hW_{s,t} + hH_{s,t}\hspace{0.125mm},\\
\int_{s}^{t}\int_{s}^{u}\,dv\circ\, dW_{u} & = \frac{1}{2}hW_{s,t} - hH_{s,t}\hspace{0.125mm},\\
\int_{s}^{t}\int_{s}^{u}\int_{s}^{v}\circ\,dW_{r}\circ\,dW_{v}\, du & = \frac{1}{6}hW_{s,t}^{2} + \frac{1}{2}hW_{s,t}H_{s,t} + L_{s,t}\hspace{0.125mm},\\
\int_{s}^{t}\int_{s}^{u}\int_{s}^{v}\circ\,dW_{r}\, dv\circ dW_{u} & = \frac{1}{6}hW_{s,t}^{2} - 2\hspace{0.125mm}L_{s,t}\hspace{0.125mm},\\
\int_{s}^{t}\int_{s}^{u}\int_{s}^{v}\,dr \circ dW{v}\circ dW_{u} & = \frac{1}{6}hW_{s,t}^{2} - \frac{1}{2}hW_{s,t}H_{s,t} + L_{s,t}\hspace{0.125mm}.
\end{align*}
\end{theorem}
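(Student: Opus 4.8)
The plan is to treat the five identities as a descending hierarchy: first dispatch the two level-two (double) integrals directly from Definition \ref{spacetimelevy}, then bootstrap to the three level-three (triple) integrals using the fact that Stratonovich integrals obey ordinary calculus, so that products of iterated integrals decompose as shuffles.

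First I would handle the two double integrals. Since the inner integral is $\int_s^u \circ dW_v = W_{s,u}$, the first identity reads $\int_s^t\int_s^u \circ dW_v\, du = \int_s^t W_{s,u}\, du$, and rearranging Definition \ref{spacetimelevy} gives $\int_s^t W_{s,u}\, du = \frac{1}{2}hW_{s,t} + hH_{s,t}$, which is exactly the claim. For the second, the integrand $(u-s)$ in $\int_s^t\int_s^u dv \circ dW_u = \int_s^t (u-s)\circ dW_u$ is deterministic, so the Stratonovich and It\^{o} integrals coincide and ordinary integration by parts yields $hW_{s,t} - \int_s^t W_{s,u}\, du$; substituting the first identity produces $\frac{1}{2}hW_{s,t} - hH_{s,t}$.

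The crux is the three triple integrals, which I will call $A$, $B$, $C$ for the left-hand sides of the third, fourth and fifth identities. Reading the differentials in time order, these are the iterated Stratonovich integrals of the arrangements $WWt$, $WtW$ and $tWW$ of two copies of $dW$ and one copy of $dt$. I would extract three independent linear relations among them. The definition of $L_{s,t}$ supplies $A - 2B + C = 6\hspace{0.25mm}L_{s,t}$. Expanding the product $\big(\int_s^t du\big)\big(\int_s^t\int_s^u \circ dW_v \circ dW_u\big) = \frac{1}{2}hW_{s,t}^2$ as a shuffle gives $A + B + C = \frac{1}{2}hW_{s,t}^2$, since the single time slot may sit before, between, or after the two Brownian slots. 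Expanding $W_{s,t}\cdot\big(\int_s^t\int_s^u dv\circ dW_u\big)$, with the level-two value $\frac{1}{2}hW_{s,t} - hH_{s,t}$ from the second identity, gives $B + 2C = W_{s,t}\big(\frac{1}{2}hW_{s,t} - hH_{s,t}\big)$. Solving this $3\times 3$ linear system for $A$, $B$, $C$ returns precisely the stated expressions.

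I expect the main obstacle to be the shuffle step rather than the subsequent algebra, which is routine once the system is assembled. One must justify that each product of lower Stratonovich integrals genuinely decomposes as the claimed sum of triple integrals; rather than invoke the general shuffle theory I would verify each relation by hand through integration by parts, differentiating products such as $W_{s,u}\int_s^u\int_s^v dr \circ dW_v$ with the Stratonovich Leibniz rule and integrating in $u$. The one delicate point is the combinatorics of the interleavings --- for instance, inserting a $W$ into the word $tW$ produces $WtW$ once but $tWW$ twice, which is the source of the coefficient $2$ on $C$ --- so the bookkeeping of these multiplicities and of the inner-versus-outer ordering of the integrals must be done carefully.
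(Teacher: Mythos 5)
Your proposal is correct, and it is essentially the paper's argument carried out in full: the paper's proof consists of the single line ``the result follows from numerous applications of integration by parts,'' and your shuffle relations (the definition of $L_{s,t}$ plus the two product expansions $A+B+C=\tfrac{1}{2}hW_{s,t}^{2}$ and $B+2C=W_{s,t}\big(\tfrac{1}{2}hW_{s,t}-hH_{s,t}\big)$, solved as a $3\times 3$ linear system) are precisely what repeated Stratonovich integration by parts yields. The details check out, including the multiplicity $2$ on $C$ and the resulting values of $A$, $B$, $C$.
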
\vspace{-0.5mm}
\begin{proof} The result follows from numerous applications of integration by parts.
\end{proof}\medbreak\noindent
We can now present the new unbiased estimator for third order iterated integrals
of Brownian motion and time. The proposed estimator is fast to compute and the best
$L^{2}(\mathbb{P})$ approximation of these integrals that is measurable with respect to $\left(W_{s,t}, H_{s,t}\right)$.\medbreak
\begin{theorem}[Conditional moments of Brownian space-space-time L\'{e}vy area]\vspace{-3.5mm}\label{secondcondthm}
\begin{align}
\mathbb{E}\left[L_{s,t}\,\big|\,W_{s,t}\hspace{0.125mm}, H_{s,t}\right] & =  \frac{1}{30}\hspace{0.25mm}h^{2} + \frac{3}{5}\hspace{0.25mm}hH_{s,t}^{2}\,,\label{secondcondexp}\\[3pt]
\var\left(L_{s,t}\,\big|\,W_{s,t}\hspace{0.125mm}, H_{s,t}\right) & = \frac{11}{25200}\hspace{0.25mm}h^{4} + h^{3}\left(\frac{1}{720}\hspace{0.25mm}W_{s,t}^{2} + \frac{1}{700}\hspace{0.25mm}H_{s,t}^{2}\right)\hspace{-0.5mm}.\label{firstcondvar}
\end{align}
\end{theorem}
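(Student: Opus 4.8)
The plan is to reduce both identities to the conditional moments of $\int_{0}^{1}W_{u}^{2}\,du$, which are already available, by showing that $L_{0,1}$ is an affine function of $\int_{0}^{1}W_{u}^{2}\,du$ whose remaining coefficients are $\sigma(W_{1}, H_{1})$-measurable. By the natural Brownian scaling it suffices to work on $[\hspace{0.1mm}0,1]$ and then rescale; here $L_{s,t}$ scales like $h^{2}$ while $W_{s,t}$ and $H_{s,t}$ scale like $h^{\frac{1}{2}}$, which is exactly the pattern of powers of $h$ appearing on the right-hand sides of (\ref{secondcondexp}) and (\ref{firstcondvar}).

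First I would establish the key linear relationship. Since Stratonovich calculus obeys the ordinary chain rule, $W_{u}^{2} = 2\int_{0}^{u}W_{r}\circ dW_{r} = 2\int_{0}^{u}\int_{0}^{v}\circ\,dW_{r}\circ dW_{v}$, so integrating in $u$ gives
\begin{align*}
\int_{0}^{1}W_{u}^{2}\,du = 2\int_{0}^{1}\int_{0}^{u}\int_{0}^{v}\circ\,dW_{r}\circ dW_{v}\,du.
\end{align*}
Applying the third identity of Theorem \ref{logsigrelation} with $h=1$ rewrites the right-hand side as $\frac{1}{3}W_{1}^{2} + W_{1}H_{1} + 2L_{0,1}$, whence
\begin{align*}
L_{0,1} = \frac{1}{2}\int_{0}^{1}W_{u}^{2}\,du - \frac{1}{6}W_{1}^{2} - \frac{1}{2}W_{1}H_{1}.
\end{align*}

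Now the conditional expectation (\ref{secondcondexp}) follows at once: the two subtracted terms are $\sigma(W_{1},H_{1})$-measurable, so conditioning and substituting Theorem \ref{firstcondthm} (at $h=1$) gives $\frac{1}{2}\big(\frac{1}{3}W_{1}^{2} + W_{1}H_{1} + \frac{6}{5}H_{1}^{2} + \frac{1}{15}\big) - \frac{1}{6}W_{1}^{2} - \frac{1}{2}W_{1}H_{1}$; the $W_{1}^{2}$ and $W_{1}H_{1}$ contributions cancel, leaving $\frac{1}{30} + \frac{3}{5}H_{1}^{2}$, and rescaling produces (\ref{secondcondexp}). For the variance, the same measurability shows $\var(L_{0,1}\,|\,W_{1},H_{1}) = \frac{1}{4}\var\big(\int_{0}^{1}W_{u}^{2}\,du\,\big|\,W_{1},H_{1}\big)$. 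Since $H_{1} = \int_{0}^{1}W_{u}\,du - \frac{1}{2}W_{1}$, the $\sigma$-algebra $\sigma(W_{1},H_{1})$ coincides with $\sigma\big(W_{1},\int_{0}^{1}W_{u}\,du\big)$, so this conditional variance is precisely the quantity computed in Theorem \ref{introintegralvar}; multiplying its value $\frac{11}{6300} + \frac{1}{180}W_{1}^{2} + \frac{1}{175}H_{1}^{2}$ by $\frac{1}{4}$ and rescaling yields (\ref{firstcondvar}).

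If one prefers a self-contained derivation of the variance rather than quoting Theorem \ref{introintegralvar}, I would compute it directly from the decomposition $W = \wideparen{W} + Z$ of Theorem \ref{polyplusnoise} (the case $n=2$), where $\wideparen{W}_{u} = uW_{1} + 6u(1-u)H_{1}$ is determined by $(W_{1},H_{1})$ and the arch $Z$ is independent of it with covariance $K_{Z}$. Writing $\int_{0}^{1}W_{u}^{2}\,du = \int_{0}^{1}\wideparen{W}_{u}^{2}\,du + 2\int_{0}^{1}\wideparen{W}_{u}Z_{u}\,du + \int_{0}^{1}Z_{u}^{2}\,du$ and conditioning, the first term is deterministic, and the covariance between the linear functional $\int_{0}^{1}\wideparen{W}_{u}Z_{u}\,du$ and the quadratic functional $\int_{0}^{1}Z_{u}^{2}\,du$ vanishes because $\mathbb{E}[Z_{u}Z_{v}^{2}] = 0$ for a centered Gaussian process. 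Hence the conditional variance splits as $4\int_{0}^{1}\int_{0}^{1}\wideparen{W}_{u}\wideparen{W}_{v}K_{Z}(u,v)\,du\,dv + 2\int_{0}^{1}\int_{0}^{1}K_{Z}(u,v)^{2}\,du\,dv$, using Wick's formula $\cov(Z_{u}^{2},Z_{v}^{2}) = 2K_{Z}(u,v)^{2}$ for the quadratic piece. The first summand, a quadratic form in $(W_{1},H_{1})$, has no $W_{1}H_{1}$ cross term because $\int_{0}^{1}K_{Z}(u,v)\,du = \mathbb{E}\big[Z_{v}\int_{0}^{1}Z_{u}\,du\big] = 0$, matching the absence of such a term in (\ref{firstcondvar}). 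The main obstacle in either route is purely computational, namely evaluating the elementary but tedious double integrals $\int_{0}^{1}\int_{0}^{1}\wideparen{W}_{u}\wideparen{W}_{v}K_{Z}$ and $\int_{0}^{1}\int_{0}^{1}K_{Z}^{2}$; the only genuinely structural input is the Stratonovich chain rule together with Theorem \ref{logsigrelation}, which linearize $L_{0,1}$ against $\int_{0}^{1}W_{u}^{2}\,du$.
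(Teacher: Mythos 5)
Your self-contained route is essentially the paper's own proof: the paper likewise reduces $\var(L_{0,1}\,|\,W_{1},H_{1})$ to $\tfrac{1}{4}\var\big(\int_{0}^{1}W_{u}^{2}\,du\,\big|\,W_{1},H_{1}\big)$ via Theorem \ref{logsigrelation}, decomposes $W=\wideparen{W}+Z$ using Theorem \ref{polyplusnoise}, kills the cross-covariance between $\int\wideparen{W}Z$ and $\int Z^{2}$ by the symmetry $Z\overset{d}{=}-Z$ (your vanishing-third-moment argument is the same fact), and then evaluates the two remaining double integrals against $K_{Z}$ and $K_{Z}^{2}$. Making the affine relation $L_{0,1}=\tfrac{1}{2}\int_{0}^{1}W_{u}^{2}\,du-\tfrac{1}{6}W_{1}^{2}-\tfrac{1}{2}W_{1}H_{1}$ explicit is a nice touch that the paper leaves implicit. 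Two caveats. First, your primary route --- quoting Theorem \ref{introintegralvar} --- is circular within the paper's logical structure: that theorem is stated in the introduction only as a preview, and its proof \emph{is} the proof of Theorem \ref{secondcondthm}; so the self-contained route is the one that must carry the weight. Second, your side remark that the quadratic form $\int\!\!\int\wideparen{W}_{u}\wideparen{W}_{v}K_{Z}(u,v)\,du\,dv$ has no $W_{1}H_{1}$ cross term ``because $\int_{0}^{1}K_{Z}(u,v)\,du=0$'' is not by itself sufficient: the cross-term coefficient is proportional to $\int_{0}^{1}\!\!\int_{0}^{1}u\,v(1-v)\,K_{Z}(u,v)\,du\,dv$, and $\int_{0}^{1}uK_{Z}(u,v)\,du$ does not vanish pointwise in $v$. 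The cross term does vanish, but one also needs the reflection symmetry $K_{Z}(u,v)=K_{Z}(1-u,1-v)$ to symmetrize $u\mapsto 1-u$ before invoking $\int_{0}^{1}K_{Z}(u,v)\,du=0$; since this observation is not load-bearing (the cross term emerges as $0$ from the explicit evaluation of the quadratic form in any case), it does not affect the validity of the proof.
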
\vspace{-1mm}
\begin{proof}
The expectation (\ref{secondcondexp}) is simply a consequence of Theorems \ref{firstcondthm} and \ref{logsigrelation}.
Without loss of generality, we will consider the above conditional variance on $[\hspace{0.1mm}0,1]$.
Since $\wideparen{W}$ is determined using the increment $W_{1}$ and space-time L\'{e}vy area $H_{1}$, we have
\begin{align*}
\var\left(\hspace{0.25mm}\int_{0}^{1}W_{u}^{2}\,du\, \Big|\, W_{1}, H_{1}\right) & = \var\left(\hspace{0.25mm}\int_{0}^{1}\wideparen{W}_{u}^{2}\,du + 2\int_{0}^{1}\wideparen{W}_{u}\hspace{0.25mm}Z_{u}\,du
+\hspace{-0.25mm}\int_{0}^{1}\hspace{-0.5mm}Z_{u}^{2}\,du\, \Big|\, W_{1}, H_{1}\right)\\[2pt]
& = 4\var\left(\hspace{0.25mm}\int_{0}^{1}\wideparen{W}_{u}\hspace{0.25mm}Z_{u}\,du\, \Big|\, W_{1}, H_{1}\right) + \var\left(\hspace{0.25mm}\int_{0}^{1}\hspace{-0.5mm}Z_{u}^{2}\,du\right)\\
&\hspace{7.5mm} + 4\cov\left(\hspace{0.25mm}\int_{0}^{1}\wideparen{W}_{u}\,Z_{u}\,du\hspace{0.25mm}, \int_{0}^{1}\hspace{-0.5mm}Z_{u}^{2}\,du\,\Big|\, W_{1}\hspace{0.125mm}, H_{1}\right)\hspace{-0.25mm}.
\end{align*}
Recall $Z = \sum\limits_{k=2}^{\infty}I_{k}\hspace{0.25mm}e_{k}$ where $\left\{I_{k}\right\}$ are independent centered Gaussian random variables.\medbreak\noindent
In particular, this means that $Z$ and $-Z$ have the same law. Therefore, we have that
\begin{align*}
\mathbb{E}\hspace{-0.25mm}\left[\hspace{0.25mm}\int_{0}^{1}\wideparen{W}_{u}\hspace{0.25mm}Z_{u}\,du\, \Big|\, W_{1}\hspace{0.125mm}, H_{1}\hspace{0.25mm}\right] =  0\hspace{0.25mm},
\end{align*}\vspace{-2mm}
\begin{align*}
\cov\left(\hspace{0.25mm}\int_{0}^{1}\wideparen{W}_{u}\hspace{0.25mm}Z_{u}\,du\hspace{0.25mm}, \int_{0}^{1}Z_{u}^{2}\,du\,\Big|\, W_{1}\hspace{0.125mm}, H_{1}\right) = 0\hspace{0.25mm}.
\end{align*}
The remaining two terms were resolved with assistance from Wolfram Mathematica.
\begin{align*}
&\var\left(\hspace{0.25mm}\int_{0}^{1}\wideparen{W}_{u}\hspace{0.25mm}Z_{u}\,du\, \Big|\, W_{1}\hspace{0.125mm}, H_{1}\right)\\
&\hspace{10mm} = \int_{0}^{1}\int_{0}^{1}\wideparen{W}_{u}\hspace{0.25mm}\wideparen{W}_{v}\,\mathbb{E}\left[Z_{u}\hspace{0.25mm}Z_{v}\,|\, W_{1}\hspace{0.125mm}, H_{1}\right]\,du\,dv\\
&\hspace{10mm} = \int_{0}^{1}\int_{0}^{1}\wideparen{W}_{u}\hspace{0.25mm}\wideparen{W}_{v}\big(\hspace{-0.25mm}\min(u,v) - uv - 3uv(1-u)(1-v)\big)\,du\,dv\\[1pt]
&\hspace{10mm} = 2\int_{0}^{1}\wideparen{W}_{v}\int_{0}^{v}u\hspace{0.25mm}\wideparen{W}_{u}\,du\,dv - \left(\int_{0}^{1}u\hspace{0.25mm}\wideparen{W}_{u}\,du\right)^{2} - 3\left(\hspace{0.25mm}\int_{0}^{1}u(1-u)\wideparen{W}_{u}\,du\right)^{2}\\[1pt]
&\hspace{10mm} = 2\left(\hspace{0.125mm}\frac{1}{15}\hspace{0.25mm}W_{1}^{2}+\frac{13}{60}\hspace{0.25mm}W_{1}H_{1}+\frac{13}{70}\hspace{0.25mm}H_{1}^{2}\right) - \left(\hspace{0.125mm}\frac{1}{3}\hspace{0.25mm}W_{1} + \frac{1}{2}\hspace{0.25mm}H_{1}\right)^{2} -3\left(\hspace{0.125mm}\frac{1}{12}\hspace{0.25mm}W_{1}+\frac{1}{5}\hspace{0.25mm}H_{1}\right)^{2}\\[3pt]
&\hspace{10mm} = \frac{1}{720}\hspace{0.25mm}W_{1}^{2} + \frac{1}{700}\hspace{0.25mm}H_{1}^{2}\hspace{0.25mm}.
\end{align*}\vspace{-3.5mm}
\begin{align*}
\var\left(\int_{0}^{1}Z_{u}^{2}\,du\right)
& = \mathbb{E}\hspace{-0.25mm}\left[\left(\hspace{0.25mm}\int_{0}^{1}Z_{u}^{2}\,du\right)^{2}\hspace{0.25mm}\right] - \left(\mathbb{E}\hspace{-0.25mm}\left[\hspace{0.25mm}\int_{0}^{1}Z_{u}^{2}\,du\right]\right)^{2}\\[1pt]
& = \int_{0}^{1}\int_{0}^{1}\mathbb{E}\hspace{-0.25mm}\left[Z_{u}^{2}\hspace{0.25mm}Z_{v}^{2}\right]\,du\,dv - \left(\mathbb{E}\left[\hspace{0.25mm}\int_{0}^{1}Z_{u}^{2}\,du\right]\right)^{2}\\[1pt]
& = \int_{0}^{1}\int_{0}^{1}\mathbb{E}\hspace{-0.25mm}\left[Z_{u}^{2}\hspace{0.25mm}\right]\mathbb{E}\hspace{-0.25mm}\left[Z_{v}^{2}\hspace{0.25mm}\right] + 2\hspace{0.25mm}\big(\hspace{0.25mm}\mathbb{E}[Z_{u}\hspace{0.25mm}Z_{v}]\hspace{0.25mm}\big)^{2}\,du\,dv - \left(\mathbb{E}\hspace{-0.25mm}\left[\hspace{0.25mm}\int_{0}^{1}Z_{u}^{2}\,du\right]\right)^{2}\\[3pt]
& = 4\int_{0}^{1}\int_{0}^{v}\big(u - uv - 3uv(1-u)(1-v)\big)^{2}\,du\,dv \\[3pt]
& = \frac{11}{6300}\hspace{0.25mm}.
\end{align*}
By Theorem \ref{logsigrelation}, the above gives an explicit formula for the conditional variance (\ref{firstcondvar}).
\begin{align*}
\var\left(L_{1}\,\big|\,W_{1}\hspace{0.125mm}, H_{1}\right) = \var\left(\frac{1}{2}\int_{0}^{1}W_{u}^{2}\,du\, \Big|\, W_{1}\hspace{0.125mm}, H_{1}\right) = \frac{11}{25200} + \frac{1}{720}\hspace{0.25mm}W_{1}^{2} + \frac{1}{700}\hspace{0.25mm}H_{1}^{2}\hspace{0.25mm}.
\end{align*}
By the natural Brownian scaling, the result on the interval $[s,t]$ directly follows.
\end{proof}\medbreak
\begin{remark}
The conditional variance (\ref{firstcondvar}) allows one to estimate local $L^{2}(\mathbb{P})$ errors for certain numerical methods and thus may be useful when choosing step sizes.
\end{remark}\medbreak\noindent
Therefore in order to propagate a numerical solution of (\ref{ogsde}) over an interval $[s,t]$,
one can generate $\left(W_{s,t}, H_{s,t}\right)$ exactly and then approximate $L_{s,t}$ using Theorem \ref{secondcondthm}.
However, there are many numerical methods that could be used to solve a given SDE.
\subsection{Examples of ODE methods} We will consider the following two methods:\medbreak
\begin{definition}[High order log-ODE method]\label{logodedef}
For a fixed number of steps $N$
we can construct a numerical solution $\left\{Y_{k}\right\}_{0\leq k \leq N}$ of (\ref{ogsde}) by setting $Y_{0} := \xi$ and for
each $k \in [0 \mathrel{{.}\,{.}}\nobreak N-1]$, defining $Y_{k+1}$ to be the solution at $u=1$ of the following ODE:
\begin{align}\label{logode}
\frac{dz}{du} & = f_{0}(z)\hspace{0.25mm}h + f_{1}(z)\hspace{0.25mm}W_{t_{k},t_{k+1}} + [f_{1}, f_{0}](z)\cdot hH_{t_{k},t_{k+1}}\\
&\hspace{14.5mm} + [f_{1}, [f_{1}, f_{0}]](z)\cdot
\mathbb{E}\big[\hspace{0.25mm}L_{t_{k},t_{k+1}}\, \big|\, W_{t_{k},t_{k+1}}\hspace{0.125mm}, H_{t_{k},t_{k+1}}\hspace{0.25mm}\big],\nonumber\\
z_{0} & = Y_{k}\hspace{0.125mm},\nonumber
\end{align}\noindent
where $h := \frac{T}{N}$, $t_{k} := kh$ and $[\,\cdot\hspace{0.5mm}, \cdot\,]$ denotes the standard Lie bracket of vector fields.
\end{definition}\medbreak
\begin{definition}[The parabola-ODE method]\label{parabolaodedef}
For a fixed number of steps $N$
we can construct a numerical solution $\left\{Y_{k}\right\}_{0\leq k \leq N}$ of (\ref{ogsde}) by setting $Y_{0} := \xi$ and for
each $k \in [0 \mathrel{{.}\,{.}}\nobreak N-1]$, defining $Y_{k+1}$ to be the solution at $u=1$ of the following ODE:
\begin{align}\label{paraode1}
\frac{dz}{du} & = f_{0}(z)\hspace{0.25mm}h + f_{1}(z)\hspace{0.25mm}\big(W_{t_{k},t_{k+1}} + (6 - 12u)\hspace{0.25mm}H_{t_{k},t_{k+1}}\big),\\
z_{0} & = Y_{k}\hspace{0.125mm},\nonumber
\end{align}
where\hspace{0.125mm} $h := \frac{T}{N}$ and\hspace{0.125mm} $t_{k} := kh$.
\end{definition}\medbreak\noindent
In both numerical methods the true solution $y$ at time $t_{k}$ can be approximated by $Y_{k}$.
Whilst there are different ways of interpolating between the successive approximations
$Y_{k}$ and $Y_{k+1}$, for this paper we will simply interpolate between such points linearly.
To analyse the above methods, we shall first note the key differences between them.
The first important distinction between the two methods is a purely practical one.
Although both of these methods involve computing a numerical solution of an ODE, the
parabola method does not require one to explicitly resolve vector field derivatives.
The second significant difference can be seen in the Taylor expansions of the methods.\medbreak
\begin{theorem}\label{localerrors}
Let $Y^{\text{log}}$ be the one-step approximation defined by the log-ODE method
on the interval $[s,t]$ with initial value $Y_{0}^{\text{log}} = y_{s}$. Then for sufficiently small $h$
\begin{align}\label{logodestep}
Y_{1}^{\text{log}} = y_{t} - [f_{1}, [f_{1}, f_{0}]]
(y_{s})\hspace{0.5mm}\big(\hspace{0.25mm}L_{s,t} - \mathbb{E}\big[\hspace{0.25mm}L_{s,t}\, \big|\, W_{s,t}\hspace{0.125mm}, H_{s,t}\hspace{0.25mm}\big]\big) + O(h^{\frac{5}{2}}).
\end{align}
Similarly, let $Y^{\text{para}}$ denote the one-step approximation given by the parabola-ODE method on
the interval $[s,t]$ with the same initial value. Then for sufficiently small $h$
\begin{align}\label{parabolaodestep}
Y_{1}^{\text{para}} = y_{t} - [f_{1}, [f_{1}, f_{0}]]
(y_{s})\left(L_{s,t} - \frac{3}{5}H_{s,t}^{2}\right) + O(h^{\frac{5}{2}}).
\end{align}
Note that $O(h^{\frac{5}{2}})$ denotes terms which can be estimated in an $L^{2}(\mathbb{P})$ sense as in (\ref{highorderestimate}).
\end{theorem}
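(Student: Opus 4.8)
The plan is to prove both identities by the same mechanism: Taylor-expand the one-step ODE solution in powers of $h^{\frac12}$ up to order $h^{2}$ (so that the neglected contributions are $O(h^{\frac52})$ in $L^{2}(\mathbb P)$), rewrite the iterated integrals occurring in the Stratonovich--Taylor expansion (\ref{stochtaylorexp}) in terms of $W_{s,t}$, $H_{s,t}$ and $L_{s,t}$ via Theorem \ref{logsigrelation}, and then compare the two expansions term by term. Throughout I would use that $W_{s,t}$ and $H_{s,t}$ are $O(h^{\frac12})$ (the Remark after Definition \ref{spacetimelevy}) while $h$, $L_{s,t}$ and $\mathbb E[L_{s,t}\mid W_{s,t},H_{s,t}]$ are $O(h)$, $O(h^{2})$ and $O(h^{2})$ respectively (Theorem \ref{secondcondthm}), so that the order count is governed by the number of $f_{1}$-factors and powers of $h$.

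For the log-ODE method the driving vector field in (\ref{logode}) is autonomous, so $Y_{1}^{\text{log}}$ is the time-$1$ value of the flow of
\[
F(z)=f_{0}(z)\hspace{0.25mm}h+f_{1}(z)\hspace{0.25mm}W_{s,t}+[f_{1},f_{0}](z)\hspace{0.25mm}hH_{s,t}+[f_{1},[f_{1},f_{0}]](z)\hspace{0.25mm}\mathbb E\big[L_{s,t}\mid W_{s,t},H_{s,t}\big],
\]
whose Taylor series is the iterated-Lie-derivative expansion $Y_{1}^{\text{log}}=y_{s}+F+\tfrac12F'F+\tfrac16\big(F'F'F+F''(F,F)\big)+\cdots$, truncated at order $h^{2}$. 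The organising idea is that the two Lie-bracket correction terms in $F$ are designed precisely to convert the symmetric products the flow generates from $f_{0}h+f_{1}W_{s,t}$ into the genuine (antisymmetric) iterated integrals of (\ref{stochtaylorexp}): the linear and $W_{s,t}^{k}$ terms match automatically, the $[f_{1},f_{0}]hH_{s,t}$ term accounts for the difference $\int\!\int\circ dW\,du-\int\!\int dv\circ dW=2hH_{s,t}$ of the two order-$h^{\frac32}$ integrals, and the $[f_{1},[f_{1},f_{0}]]$ term accounts for the $L_{s,t}$-content of the three order-$h^{2}$ triple integrals. Carried out, every term of (\ref{stochtaylorexp}) up to the $O(h^{\frac52})$ remainder is reproduced, except that where $L_{s,t}$ appears in $y_{t}$ the quantity $\mathbb E[L_{s,t}\mid W_{s,t},H_{s,t}]$ appears in $Y_{1}^{\text{log}}$. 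Both expansions carry the coefficient $[f_{1},[f_{1},f_{0}]](y_{s})$ in front of their respective Lévy-area quantity (the correction from evaluating along the flow being $O(h^{\frac12})\cdot O(h^{2})=O(h^{\frac52})$), and $L_{s,t}-\mathbb E[L_{s,t}\mid W_{s,t},H_{s,t}]=O(h^{2})$, so the two solutions differ by exactly $-[f_{1},[f_{1},f_{0}]](y_{s})\big(L_{s,t}-\mathbb E[L_{s,t}\mid W_{s,t},H_{s,t}]\big)$ up to $O(h^{\frac52})$, which is (\ref{logodestep}).

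For the parabola-ODE method I would first observe that (\ref{paraode1}) is exactly the ODE driven by the smooth Brownian parabola, since $\frac{d}{du}\wideparen{W}_{u}=W_{s,t}+(6-12u)H_{s,t}$ on the unit interval, so $Y_{1}^{\text{para}}$ is the time-$1$ flow of $f_{0}(z)h+f_{1}(z)\frac{d}{du}\wideparen{W}_{u}$. Because $\wideparen{W}$ is a genuine smooth path I can run the identical comparison with $W$ replaced by $\wideparen{W}$: its iterated integrals are the ordinary ones, it matches $W_{s,t}$ and $H_{s,t}$ by Definition \ref{brownianparabola} and the Remark after Definition \ref{spacetimelevy}, and its only discrepancy with the expansion of $y_{t}$ is again in the order-$h^{2}$ triple integrals, where the parabola carries its own space-space-time Lévy area $\wideparen{L}_{s,t}$ (a deterministic function of $H_{s,t}$, obtained by directly evaluating the relevant integral of $\wideparen{W}$ and applying Theorem \ref{logsigrelation}) in place of $L_{s,t}$. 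The same linear-coefficient argument then yields $Y_{1}^{\text{para}}=y_{t}-[f_{1},[f_{1},f_{0}]](y_{s})\big(L_{s,t}-\wideparen{L}_{s,t}\big)+O(h^{\frac52})$, which is (\ref{parabolaodestep}).

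The main obstacle is the algebraic bookkeeping at order $h^{2}$: I must verify that, after substituting Theorem \ref{logsigrelation}, the antisymmetric combination of the three triple-integral coefficients in (\ref{stochtaylorexp}) collapses to the single iterated Lie bracket $[f_{1},[f_{1},f_{0}]]$ multiplying $L_{s,t}$, and that all remaining order-$h^{\frac32}$ and order-$h^{2}$ terms generated by the flow cancel against those of the true expansion. This is the consistency statement underlying the log-ODE method and requires careful use of the bracket identity $[f_{1},f_{0}]=f_{0}'f_{1}-f_{1}'f_{0}$ and its iterate together with the product-rule structure of $F'F$, $F'F'F$ and $F''(F,F)$. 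The remaining analytic point --- that every neglected contribution is genuinely $O(h^{\frac52})$ in $L^{2}(\mathbb P)$ --- follows from the boundedness of the vector fields and their derivatives and the moment bounds on $W_{s,t}$, $H_{s,t}$ and $L_{s,t}$, exactly as for the remainder estimate (\ref{highorderestimate}) in Theorem \ref{stochtaylorthm}.
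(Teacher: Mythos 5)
Your proposal follows essentially the same route as the paper: Taylor-expand the one-step ODE flow in powers of $h^{\frac{1}{2}}$, estimate the remainder via the $O(h^{\frac{1}{2}})$ size of $F$ and its derivatives, compare term by term with the Stratonovich--Taylor expansion (\ref{stochtaylorexp}) using Theorem \ref{logsigrelation}, and for the parabola method rewrite (\ref{paraode1}) as an ODE driven by $\wideparen{W}$ and compute the discrepancy in the third-order iterated integrals. The only difference is that the paper explicitly writes out the expansions of $F$, $F'F$, $F'F'F$ and $F''(F,F)$ to perform the order-$h^{2}$ bookkeeping that you correctly identify but defer.
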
\medbreak
\begin{proof}
In order to derive (\ref{logodestep}), we must compute the Taylor expansion of (\ref{logode}).
Let $F$ denote the vector field defined in (\ref{logode}) that was constructed from $f_{0}$ and $f_{1}$.
Then $F$ is smooth, and it follows from the classical Taylor's theorem for ODEs that
\begin{align*}
Y_{1}^{\text{log}} & = y_{s} + F(y_{s}) + \frac{1}{2}F^{\prime}(y_{s})F(y_{s}) + \frac{1}{6}F^{\prime\prime}(y_{s})(F(y_{s}), F(y_{s})) + \frac{1}{6}F^{\prime}(y_{s})F^{\prime}(y_{s})F(y_{s})\\
&\hspace{8mm} + \frac{1}{24}F^{\prime}(y_{s})F^{\prime}(y_{s})F^{\prime}(y_{s})F(y_{s}) + \frac{1}{24}F^{\prime}(y_{s})F^{\prime\prime}(y_{s})(F(y_{s}), F(y_{s})) \\[2pt]
&\hspace{8mm} + \,\frac{1}{8}F^{\prime\prime}(y_{s})(F^{\prime}(y_{s})F(y_{s}),F(y_{s})) + \frac{1}{24}F^{\prime\prime\prime}(y_{s})(F(y_{s}),F(y_{s}), F(y_{s}))\\
&\hspace{8mm} + \frac{1}{24}\int_{0}^{1}(1-u)^{4}\frac{d^5}{du^{5}}\big(Y^{\text{log}}\big)(u)\,du.
\end{align*}
We shall first consider the remainder term, which can be directly estimated as follows:
\begin{align*}
\left\|\,\int_{0}^{1}(1-u)^{4}\frac{d^5}{du^{5}}\big(Y^{\text{log}}\big)(u)\,du\,\right\|_{L^{2}(\mathbb{P})} & \leq \sup_{u\in[\hspace{0.1mm}0,1]} \left\|\frac{d^5}{du^{5}}\big(Y^{\text{log}}(u)\big)\right\|_{L^{2}(\mathbb{P})}.
\end{align*}
One can define the degree of each term in the above Taylor expansion by counting
the number of times functions from $\{F, F^{\prime}, F^{\prime\prime}, \cdots\}$ appear. Therefore, after expanding the fifth derivative of $Y^{\text{log}}$ we can see
that the remainder term has a degree of five.
Since the largest component of $F$ is $f_{1}(\cdot)\hspace{0.25mm}W_{s,t}$, both $F$ and its derivatives are $O(h^{\frac{1}{2}})$.
Hence the remainder term in the above Taylor expansion will be $O(h^{\frac{5}{2}})$ as in  (\ref{highorderestimate}).
Moreover, the only terms of degree four that are not $O(h^{\frac{5}{2}})$ are those involving $W_{s,t}^{4}$.\\
It is now enough to analyse just the terms appearing in the first line of the expansion.
By substituting the formula for $F$ given by (\ref{logode}) into the first line and then rearranging
the resulting terms, we can obtain a Taylor expansion for $Y_{1}^{\text{log}}$ that resembles (\ref{stochtaylorexp}) as
\begin{align*}
F(y_{s}) & = f_{0}(y_{s})\hspace{0.25mm}h + f_{1}(y_{s})\hspace{0.25mm}W_{s,t} + \big(f_{0}^{\prime}(y_{s})f_{1}(y_{s}) - f_{1}^{\prime}(y_{s})f_{0}(y_{s})\big)\hspace{0.25mm} hH_{s,t}\\[4pt]
&\hspace{5mm} + \big(f_{0}^{\prime}(y_{s})f_{1}^{\prime}(y_{s})f_{1}(y_{s}) + f_{0}^{\prime\prime}(y_{s})(f_{1}(y_{s}), f_{1}(y_{s}))\\[4pt]
&\hspace{10mm} - 2\hspace{0.25mm}f_{1}^{\prime}(y_{s})f_{0}^{\prime}(y_{s})f_{1}(y_{s}) - 2\hspace{0.25mm}f_{1}^{\prime\prime}(y_{s})(f_{0}(y_{s}), f_{1}(y_{s}))\\
&\hspace{10mm} + f_{1}^{\prime}(y_{s})f_{1}^{\prime}(y_{s})f_{0}(y_{s}) + f_{1}^{\prime\prime}(y_{s})(f_{1}(y_{s}), f_{0}(y_{s}))\big)\hspace{-0.5mm}\left(\frac{1}{30}\hspace{0.25mm}h^{2} + \frac{3}{5}\hspace{0.25mm}hH_{s,t}^{2}\right)\hspace{-0.5mm},\\[3pt]
F^{\prime}(y_{s})F(y_{s}) & = f_{1}^{\prime}(y_{s})f_{1}(y_{s})\hspace{0.25mm} W_{s,t}^{2} + \big(f_{0}^{\prime}(y_{s})f_{1}(y_{s}) + f_{1}^{\prime}(y_{s})f_{0}(y_{s})\big)\hspace{0.25mm} hW_{s,t}\\[3pt]
&\hspace{5mm} + \big(f_{0}^{\prime}(y_{s})f_{1}^{\prime}(y_{s})f_{1}(y_{s}) - f_{1}^{\prime}(y_{s})f_{1}^{\prime}(y_{s})f_{0}(y_{s})\\[3pt]
&\hspace{10mm} + f_{0}^{\prime\prime}(y_{s})(f_{1}(y_{s}), f_{1}(y_{s})) - f_{1}^{\prime\prime}(y_{s})(f_{0}(y_{s}), f_{1}(y_{s}))\big)\hspace{0.25mm} hW_{s,t}H_{s,t}\\[3pt]
&\hspace{5mm} + f_{0}^{\prime}(y_{s})f_{0}(y_{s})\hspace{0.25mm}h^{2} + O\big(h^{\frac{5}{2}}\big),
\end{align*}\vspace{-6mm}
\begin{align*}
F^{\prime}(y_{s})F^{\prime}(y_{s})F(y_{s}) & = f_{1}^{\prime}(y_{s})f_{1}^{\prime}(y_{s})f_{1}(y_{s})\hspace{0.25mm}W_{s,t}^{3} + f_{0}^{\prime}(y_{s})f_{1}^{\prime}(y_{s})f_{1}(y_{s})\hspace{0.25mm}hW_{s,t}^{2}\\[3pt]
&\hspace{5mm} + \big(f_{1}^{\prime}(y_{s})f_{0}^{\prime}(y_{s})f_{1}(y_{s}) + f_{1}^{\prime}(y_{s})f_{1}^{\prime}(y_{s})f_{0}(y_{s})\big)\hspace{0.25mm}hW_{s,t}^{2} + O\big(h^{\frac{5}{2}}\big),\\[8pt]
F^{\prime\prime}(y_{s})\big(F(y_{s}), F(y_{s})\big) & = f_{1}^{\prime\prime}(y_{s})(f_{1}(y_{s}), f_{1}(y_{s}))\hspace{0.25mm}W_{s,t}^{3} + 2\hspace{0.25mm}f_{1}^{\prime\prime}(y_{s})(f_{0}(y_{s}), f_{1}(y_{s}))\hspace{0.25mm}hW_{s,t}^{2}\\[2pt]
&\hspace{5mm} + f_{0}^{\prime\prime}(y_{s})(f_{1}(y_{s}), f_{1}(y_{s}))\hspace{0.25mm}hW_{s,t}^{2} + O\big(h^{\frac{5}{2}}\big).
\end{align*}
Therefore, by summing the above formulae for $F$ (and its derivatives) we can derive an expansion of $Y_{1}^{\text{log}}$ in terms of $f_{0}, f_{1}$ and $\big(h, W_{s,t}, H_{s,t}\big)$ that has an $O\big(h^{\frac{5}{2}}\big)$ remainder.
By comparing this with the stochastic Taylor expansion (\ref{stochtaylorexp}), the result (\ref{logodestep}) follows.\medbreak\noindent
Arguing (\ref{parabolaodestep}) is fairly straightforward and does not require extensive computations.
Using the substitution $\wideparen{Y}_{u} = z_{\frac{1}{h}(u-s)}$ for $u\in[s,t]$, the ODE (\ref{paraode1}) can be rewritten as
\begin{align}\label{paraode2}
d\wideparen{Y}_{u} & = f_{0}\big(\hspace{0.5mm}\wideparen{Y}_{u}\big)\hspace{0.25mm}du + f_{1}\big(\hspace{0.5mm}\wideparen{Y}_{u}\big)\hspace{0.25mm}d\wideparen{W}_{u}\hspace{0.125mm},\\
\wideparen{Y}_{s} & = y_{s}\hspace{0.125mm},\nonumber
\end{align}
where $\wideparen{W}$ denotes the Brownian parabola defined by $(W_{s,t}, H_{s,t})$ on the interval $[s,t]$.\medbreak\noindent
By emulating the derivation of the Stratonovich-Taylor expansion (\ref{stochtaylorexp}), it is possible
to Taylor expand (\ref{paraode2}) in the same fashion. The only difference is that Stratonovich
integrals with respect to $W$ are replaced with Riemann-Stieltjes integrals against $\wideparen{W}$.\medbreak\noindent
In particular, by the change-of-variable formula for ODEs (exercise 3.17) given in \cite{Friz}, we see that the remainder term of such a Taylor expansion will have the below form:
\begin{align*}
\wideparen{R}\, = \sum_{\substack{i_{1},\hspace{0.25mm}\cdots,\hspace{0.25mm} i_{n}\in\{0,1\} \\ i_{1}\hspace{0.25mm}+\hspace{0.25mm}\cdots\hspace{0.25mm}+\hspace{0.25mm}i_{n}\hspace{0.25mm} =\hspace{0.35mm} 2n-4}}\int_{s\hspace{0.25mm} <\hspace{0.125mm} r_{1}\hspace{0.125mm} <\hspace{0.125mm}\cdots\hspace{0.125mm} <\hspace{0.125mm} r_{n} <\hspace{0.25mm} t}f_{i_{1},\hspace{0.25mm}\cdots,\hspace{0.25mm} i_{n}}\big(\hspace{0.5mm}\wideparen{Y}_{r_{1}}\big) - f_{i_{1},\hspace{0.25mm}\cdots,\hspace{0.25mm} i_{n}}\big(\hspace{0.5mm}\wideparen{Y}_{s}\hspace{0.25mm}\big)\,d\wideparen{W}_{r_{1}}^{i_{1}}\,\cdots\, d\wideparen{W}_{r_{n}}^{i_{n}}\hspace{0.25mm},
\end{align*}
where we have identified an additional ``\hspace{0.2mm}zero'' coordinate of $\wideparen{W}$ with time, $\wideparen{W}_{t}^{0} := t$, and for each index $(i_{1}, \cdots i_{n})$, the function $f_{i_{1},\hspace{0.25mm}\cdots,\hspace{0.25mm} i_{n}} : \mathbb{R}^{d}\rightarrow \mathbb{R}^{d}$ consists of finitely many compositions of $f_{0}, f_{1}$ along with their derivatives (and thus is Lipschitz continuous).\medbreak\noindent
Therefore each term in the expansion of (\ref{paraode2}) can be estimated in $L^{2}(\mathbb{P})$ by applying
the natural Brownian scaling to the corresponding iterated integral of $\wideparen{W}$ with time.
As before, the largest differences are the $O(h^{2})$ terms involving third order integrals.
Fortunately, iterated integrals of the Brownian parabola can be computed explicitly:
\begin{align*}
\int_{s}^{t}\int_{s}^{u}\int_{s}^{v}\circ\,dW_{r}\circ\, dW_{v}\, du
- \int_{s}^{t}\int_{s}^{u}\int_{s}^{v}\,d\wideparen{W}_{r}\, d\wideparen{W}_{v}\, du
& = L_{s,t} - \frac{3}{5}H_{s,t}^{2}\hspace{0.125mm}.
\end{align*}
The result (\ref{parabolaodestep}) is now a direct consequence of Theorem \ref{logsigrelation} along with the above.
\end{proof}
\medbreak\noindent
Theorem \ref{localerrors} shows that both methods give a one-step approximation error of $O(h^{2})$.
This means that the log-ODE and parabola-ODE methods are both locally high order;
however there is a significant difference in how these methods propagate local errors.
The reason is that the $O(h^{2})$ components of the log-ODE local errors give a martingale,
whilst the $O(h^{2})$ part for each parabola-ODE local error has non-zero expectation.
Thus the log-ODE method is globally high order whilst the parabola method is not.
However, since the parabola-ODE method is straightforward to implement and locally
high order, one could expect it to perform well compared to other low order methods.
In the numerical example, we shall see that the parabola method has the same order
of convergence as the piecewise linear approach but gives significantly smaller errors.
To conclude this section, we will present the orders of convergence for both methods.\medbreak
\begin{definition}[Strong convergence] A numerical solution $Y$ for (\ref{ogsde}) is said
to converge in a strong sense with order $\alpha$ if there exists a constant $C > 0$ such that
\begin{align*}
\left\|Y_{N} - y_{T}\right\|_{L^{2}\left(\mathbb{P}\right)} \leq Ch^{\alpha},
\end{align*}
for all sufficiently small step sizes $h = \frac{T}{N}$.
\end{definition}\medbreak
\begin{definition}[Weak convergence] A numerical solution $Y$ for (\ref{ogsde}) is said
to converge in a weak sense with order $\beta$ if for any polynomial $p$ there exists $C_{p} > 0$
such that
\begin{align*}
\big|\,\mathbb{E}\hspace{-0.25mm}\left[\,p\hspace{0.25mm}(\hspace{0.25mm}Y_{N})\right] - \mathbb{E}\hspace{-0.25mm}\left[\,p\hspace{0.25mm}(\hspace{0.25mm}y_{T})\right]\big|\leq C_{p}\,h^{\beta},
\end{align*}
for all sufficiently small step sizes $h = \frac{T}{N}$.
\end{definition}\medbreak
\begin{theorem}[Orders of convergence]
For a general SDE (\ref{ogsde}), the log-ODE method converges in a strong sense with order 1.5 and a weak sense with order 2.0.
The parabola-ODE method converges in both a strong and weak sense with order 1.0.
\end{theorem}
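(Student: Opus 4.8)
The plan is to derive the global rates from the one-step expansions of Theorem \ref{localerrors} by the standard passage from local to global error for one-step schemes (Milstein's fundamental convergence theorem, e.g. from \cite{KloePlat}), rather than reproving the propagation. For either method write two local exponents: $p_1$ for the systematic error $\|\mathbb{E}[Y_1 - y_t \mid \mathcal{F}_s]\|_{L^2} = O(h^{p_1})$ and $p_2$ for the mean-square error $\|Y_1 - y_t\|_{L^2} = O(h^{p_2})$. Since $f_0, f_1$ are smooth with bounded derivatives, a Gronwall/stability estimate lets the systematic errors accumulate linearly over the $N = T/h$ steps (contributing $O(h^{p_1 - 1})$) while the conditionally-centred errors accumulate like an $L^2$ martingale (contributing $O(h^{p_2 - \frac12})$). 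Thus the global strong order is $\min(p_1 - 1,\, p_2 - \tfrac12)$, and the global weak order is controlled by the systematic term alone, namely it equals $p_1 - 1$ when the local weak error is $O(h^{p_1})$.

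For the log-ODE method, (\ref{logodestep}) gives $Y_1^{\text{log}} - y_t = -[f_1,[f_1,f_0]](y_s)\,\Delta L + O(h^{5/2})$ with $\Delta L := L_{s,t} - \mathbb{E}[L_{s,t}\mid W_{s,t}, H_{s,t}]$. First I would note that $\Delta L$ is conditionally centred, so $\|\mathbb{E}[Y_1^{\text{log}} - y_t \mid \mathcal{F}_s]\|_{L^2} = O(h^{5/2})$, giving $p_1 = \tfrac52$, while the variance formula (\ref{firstcondvar}) gives $\|\Delta L\|_{L^2} = O(h^2)$, so $p_2 = 2$; then $\min(p_1 - 1, p_2 - \tfrac12) = \tfrac32$ yields strong order $\tfrac32$. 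The weak order $2$ is the delicate point: I would Taylor expand $g$ about $y_t$ and show the local weak error $\mathbb{E}[g(Y_1^{\text{log}}) - g(y_t)\mid\mathcal{F}_s]$ is $O(h^3)$. Because $\mathbb{E}[\Delta L \mid W_{s,t}, H_{s,t}] = 0$, every cross-moment of $\Delta L$ against a polynomial in $(W_{s,t}, H_{s,t})$ drops out; the only survivors are $\mathbb{E}[(\Delta L)^2(\cdots)] = O(h^4)$ and $\mathbb{E}[L_{s,t}\,\Delta L](\cdots) = \var(\Delta L)(\cdots) = O(h^4)$. The $O(h^{5/2})$ remainder consists of iterated integrals carrying an odd number of Brownian factors, whose conditional means vanish by the $W\mapsto -W$ symmetry, so its systematic contribution is pushed down to $O(h^3)$; hence $p_1$ is effectively $3$ in the weak sense and the weak order is $2$.

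For the parabola-ODE method, the leading local error in (\ref{parabolaodestep}) is $-[f_1,[f_1,f_0]](y_s)\,(L_{s,t} - \tfrac35 h H_{s,t}^2)$, and by (\ref{secondcondexp}) its conditional mean is the nonzero quantity $-[f_1,[f_1,f_0]](y_s)\,\tfrac{1}{30}h^2$. Hence here $p_1 = p_2 = 2$, so $\min(p_1 - 1, p_2 - \tfrac12) = 1$ gives strong order $1$. For the weak order I would give matching bounds: the systematic bias $\tfrac{1}{30}h^2$ per step accumulates to a global weak error of size $O(h)$, and since $[f_1,[f_1,f_0]]$ does not generically average to zero along the flow this $O(h)$ term does not cancel, so the weak order is exactly $1$.

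The hard part will be the weak-order-$2$ claim for the log-ODE method: proving the $O(h^3)$ local weak error rests on the two structural facts above — the conditional centring and $O(h^4)$ variance of $\Delta L$ supplied by Theorem \ref{secondcondthm}, together with the odd-parity (mean-zero) structure of the fifth-order remainder. Once these are in place, the global statements follow from the standard stability estimate. The remaining work — verifying the stated moment and variance orders and invoking Gronwall with the bounded vector fields — is routine but must be carried out to make the local-to-global passage rigorous.
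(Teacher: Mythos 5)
Your proposal is correct and follows essentially the same route as the paper, which likewise derives the global rates from the one-step expansions of Theorem \ref{localerrors} by invoking the standard local-to-global convergence theorems of Kloeden--Platen (Theorems 11.5.1 and 14.5.2 in \cite{KloePlat}). In fact you supply more detail than the paper does --- the explicit local exponents $p_1, p_2$, the conditional centring and $O(h^{4})$ conditional variance of $L_{s,t}-\mathbb{E}[L_{s,t}\mid W_{s,t},H_{s,t}]$ from Theorem \ref{secondcondthm}, and the non-vanishing $\frac{1}{30}h^{2}$ bias of the parabola method --- all of which the paper leaves implicit in its citations.
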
\medbreak
\begin{proof}
Note that Theorem \ref{localerrors} establishes the Taylor expansions of both methods.
The strong convergence can then be shown as in the proof of Theorem 11.5.1 in \cite{KloePlat}.
Moreover, the proof of Theorem 11.5.1 also provides the orders of strong convergence.
Similarly weak convergence follows directly from the Taylor expansions (\ref{logodestep}) \& (\ref{parabolaodestep}),
and the rate of convergence can be shown as in the proof of Theorem 14.5.2 in \cite{KloePlat}.
\end{proof}

\section{A numerical example}
We shall demonstrate the ideas presented so far
using various discretizations of Inhomogeneous Geometric Brownian Motion (IGBM)
\begin{align}\label{IGBM}
dy_{t} = a(b-y_{t})\,dt+\sigma\hspace{0.125mm} y_{t}\,dW_{t}\hspace{0.125mm},
\end{align}
where $a \geq 0$ and $b\in \mathbb{R}$ are the mean reversion parameters and $\sigma \geq 0$ is the volatility.
As the vector fields are smooth, the SDE (\ref{IGBM}) can be expressed in Stratonovich form:
\begin{align}\label{stratIGBM}
dy_{t} = \tilde{a}(\tilde{b} - y_{t})\,dt + \sigma\hspace{0.125mm} y_{t}\circ dW_{t}\hspace{0.125mm},
\end{align}
where $\tilde{a} := a + \frac{1}{2}\sigma^{2}$ and $\tilde{b} := \frac{2ab}{2a+\sigma^{2}}$ denote the ``adjusted'' mean reversion parameters.\medbreak\noindent
IGBM is an example of a one-factor short rate model and has seen recent attention in
the mathematical finance literature as an alternative to popular models \cite{IGBMapproximation, IGBMapplication}.
IGBM is also one of the simplest SDEs that has no known method of exact simulation.
We will investigate the strong and weak convergence rates of the following methods:\medbreak
\begin{enumerate}
\item \textbf{Log-ODE method} (see definition \ref{logodedef})\smallskip\\
Since the vector fields of (\ref{stratIGBM}) give constant Lie brackets, this method becomes
\begin{align*}
Y_{k+1}^{\text{log}} & := Y_{k}^{\text{log}}e^{-\tilde{a}h + \sigma W_{t_{k},t_{k+1}}}\\
&\hspace{4mm} +  abh\left(1 - \sigma H_{t_{k},t_{k+1}}\hspace{-0.5mm}
+ \sigma^{2}\left(\frac{3}{5}H_{t_{k},t_{k+1}}^{2}\hspace{-0.5mm} + \frac{1}{30}h\right)\right)
\frac{e^{-\tilde{a}h + \sigma W_{t_{k},t_{k+1}}}  - 1}{-\tilde{a}h + \sigma W_{t_{k},t_{k+1}}}\,,\nonumber\\
Y_{0}^{\text{log}} & := y_{0}\hspace{0.125mm}.\nonumber
\end{align*}
\item \textbf{Parabola-ODE method} (see definition \ref{parabolaodedef})\smallskip\\
As the SDE (\ref{stratIGBM}) is quite analytically tractable, this method is expressible as
\begin{align*}
Y_{k+1}^{\text{para}} & := e^{-\tilde{a}h + \sigma W_{t_{k},t_{k+1}}}\left(Y_{k}^{\text{para}} 
+ ab\int_{t_{k}}^{t_{k+1}}e^{\tilde{a}\left(s-t_{k}\right) - \sigma \wideparen{W}_{t_{k}, s}}\,ds\right),\\
Y_{0}^{\text{para}} & := y_{0}\hspace{0.125mm}.\nonumber
\end{align*}
The integral above will be computed by $3$-point Gauss-Legendre quadrature.\medskip
\item \textbf{Piecewise linear method} (see \cite{WongZakai} for definition and proof of convergence)\smallskip\\
Just as above, this method can be simplified to give a straightforward formula.
\begin{align*}
Y_{k+1}^{\text{lin}} & := Y_{k}^{\text{lin}}e^{-\tilde{a}h + \sigma W_{t_{k},t_{k+1}}}
+  abh\,\frac{e^{-\tilde{a}h + \sigma W_{t_{k},t_{k+1}}}  - 1}{-\tilde{a}h + \sigma W_{t_{k},t_{k+1}}}\,,\\
Y_{0}^{\text{lin}} & := y_{0}\hspace{0.125mm}.\nonumber
\end{align*}
\item \textbf{Milstein method} (see section 6 of \cite{Higman} and section 10.3 of \cite{KloePlat} for overviews)\smallskip\\
For this method, we shall take the positive part to guarantee non-negativity.
\begin{align*}
Y_{k+1}^{\text{mil}} & := \Big(Y_{k}^{\text{mil}} + \tilde{a}(\tilde{b} - Y_{k}^{\text{mil}})h
+ \sigma \,Y_{k}^{\text{mil}}\,W_{t_{k}, t_{k+1}}\hspace{-0.5mm}
+ \frac{1}{2}\sigma^{2}\,Y_{k}^{\text{mil}}\,W_{t_{k}, t_{k+1}}^{2}\Big)^{+},\\
Y_{0}^{\text{mil}} & := y_{0}\hspace{0.125mm}.\nonumber
\end{align*}
\item \textbf{Euler-Maruyama method} (see sections 4, 5 of \cite{Higman} and section 10.2 of \cite{KloePlat})\smallskip\\
Just as above, we take the positive part of each step to ensure non-negativity.
\begin{align*}
Y_{k+1}^{\text{eul}} & := \Big(Y_{k}^{\text{eul}} + a(b - Y_{k}^{\text{eul}})h
+ \sigma \,Y_{k}^{\text{eul}}\,W_{t_{k}, t_{k+1}}\Big)^{+},\\
Y_{0}^{\text{eul}} & := y_{0}\hspace{0.125mm}.\nonumber
\end{align*}
\end{enumerate}
\medbreak\noindent
Note that the explicit formula for the log-ODE method comes from the Lie brackets:
\begin{align*}
[f_{1}, f_{0}](y) & = f_{0}^{\prime}(y)f_{1}(y) - f_{1}^{\prime}(y)f_{0}(y)\\[3pt]
& = -\tilde{a}\sigma y - \tilde{a}\sigma(\tilde{b} - y)\\[3pt]
& = -ab\sigma\hspace{0.25mm},\\[6pt]
[f_{1}, [f_{1}, f_{0}]](y) & = f_{0}^{\prime}(y)f_{1}^{\prime}(y)f_{1}(y) - 2f_{1}^{\prime}(y)f_{0}^{\prime}(y)f_{1}(y) + f_{1}^{\prime}(y)f_{1}^{\prime}(y)f_{0}(y)\\[2pt]
&\hspace{8mm} + f_{0}^{\prime\prime}(y)(f_{1}(y), f_{1}(y)) - 2\hspace{0.25mm}f_{1}^{\prime\prime}(y)(f_{0}(y), f_{1}(y)) + f_{1}^{\prime\prime}(y)(f_{1}(y), f_{0}(y))\\[3pt]
& = -\tilde{a}\sigma^{2}y + 2\hspace{0.25mm}\tilde{a}\sigma^{2}y + \tilde{a}\sigma^{2}(\tilde{b} - y)\\[3pt]
& = ab\sigma^{2}\hspace{0.25mm},
\end{align*}
and the formula for the parabola-ODE method was derived using a change of variable.\medbreak\noindent
The Euler-Maruyama and Milstein methods are included in the numerical experiment 
as benchmarks to test how the proposed methods compare to well-known methods.
As before, we will be discretizing the SDE over a uniform partition with mesh size $h$.\vspace{-3.5mm}
\begin{figure}[h]\label{IGBMsamplepaths}
\centering
\includegraphics[width=0.95\textwidth]{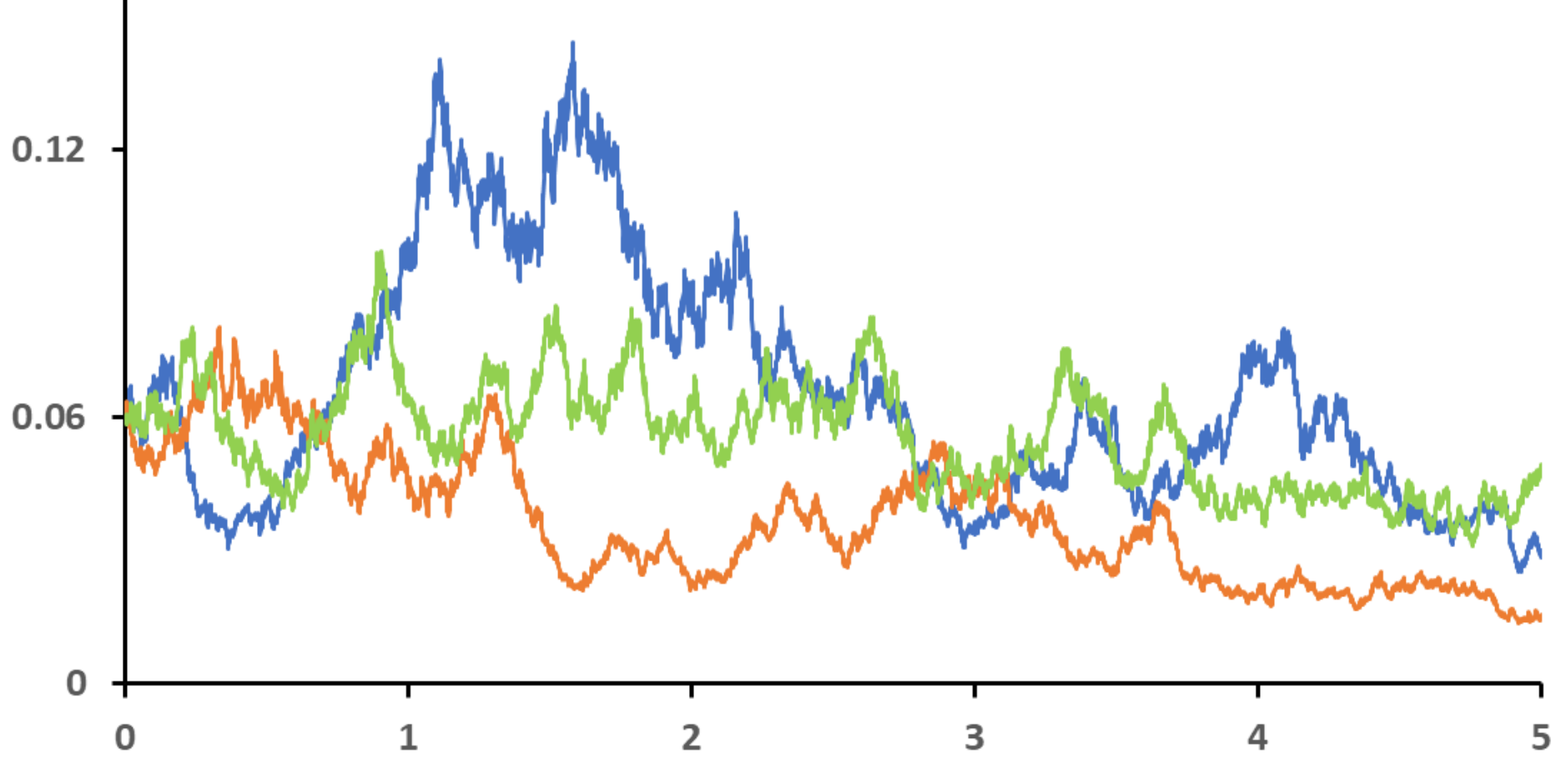}\vspace{-2.5mm}
\caption{Log-ODE sample paths of IGBM where $a=0.1$, $b=0.04$,  $\sigma = 0.6$ and $y_{0} = 0.06$.\newline\hspace*{5mm} The above sample paths experience larger fluctuations the further away from zero they are.}
\end{figure}\medbreak\vspace{-2.5mm}\noindent
Below is the definition of the error estimators used to analyse the numerical methods.\medbreak
\begin{definition}[Strong and weak error estimators]\label{errorestimators} For each $N\geq 1$, let $Y_{N}$
denote a numerical solution of (\ref{IGBM}) computed at time $T$ using a fixed step size $h = \frac{T}{N}$.
We can define the following estimators for quantifying strong and weak convergence: 
\begin{align}
S_{N} & := \sqrt{\hspace{0.25mm}\mathbb{E}\hspace{-0.25mm}\left[\left(Y_{N} - Y_{T}^{fine}\right)^{2}\hspace{0.5mm}\right]}\hspace{0.25mm},\label{strongestimator}\\
E_{N} & := \left|\,\mathbb{E}\big[\big(Y_{N} - b\hspace{0.25mm}\big)^{+}\hspace{0.25mm}\big] - \mathbb{E}\big[\big(Y_{T}^{fine} - b\hspace{0.25mm}\big)^{+}\hspace{0.25mm}\big]\right|,\label{weakestimator}
\end{align}
where the above expectations are approximated by standard Monte-Carlo simulation
and $Y_{T}^{fine}$ is the numerical solution of (\ref{IGBM}) obtained at time $T$ using the log-ODE method
with a ``\hspace{0.5mm}fine'' step size of $\min\left(\frac{h}{10}, \frac{T}{1000}\right)$. The fine step size is chosen so that the
$L^{2}(\mathbb{P})$ error between $Y_{T}^{fine}$ and the true solution $y$ is negligible compared to $S_{N}$.
Note that $Y_{N}$ and $Y_{T}^{fine}$ are both computed with respect to the same Brownian paths.
\end{definition}\medbreak\noindent
In this numerical example, we shall use the same parameter values as in \cite{IGBMapproximation}, namely
$a = 0.1$, $b=0.04$, $\sigma = 0.6$ and $y_{0} = 0.06$. We will also fix the time horizon at $T = 5$.\medbreak\noindent
We will now present our results for the numerical experiment that is described above.
(Code for this example can be found at \href{https://github.com/james-m-foster/igbm-simulation}{github.com/james-m-foster/igbm-simulation})
\begin{figure}[h]
\centering
\hspace*{-1mm}\includegraphics[width=1.025\textwidth]{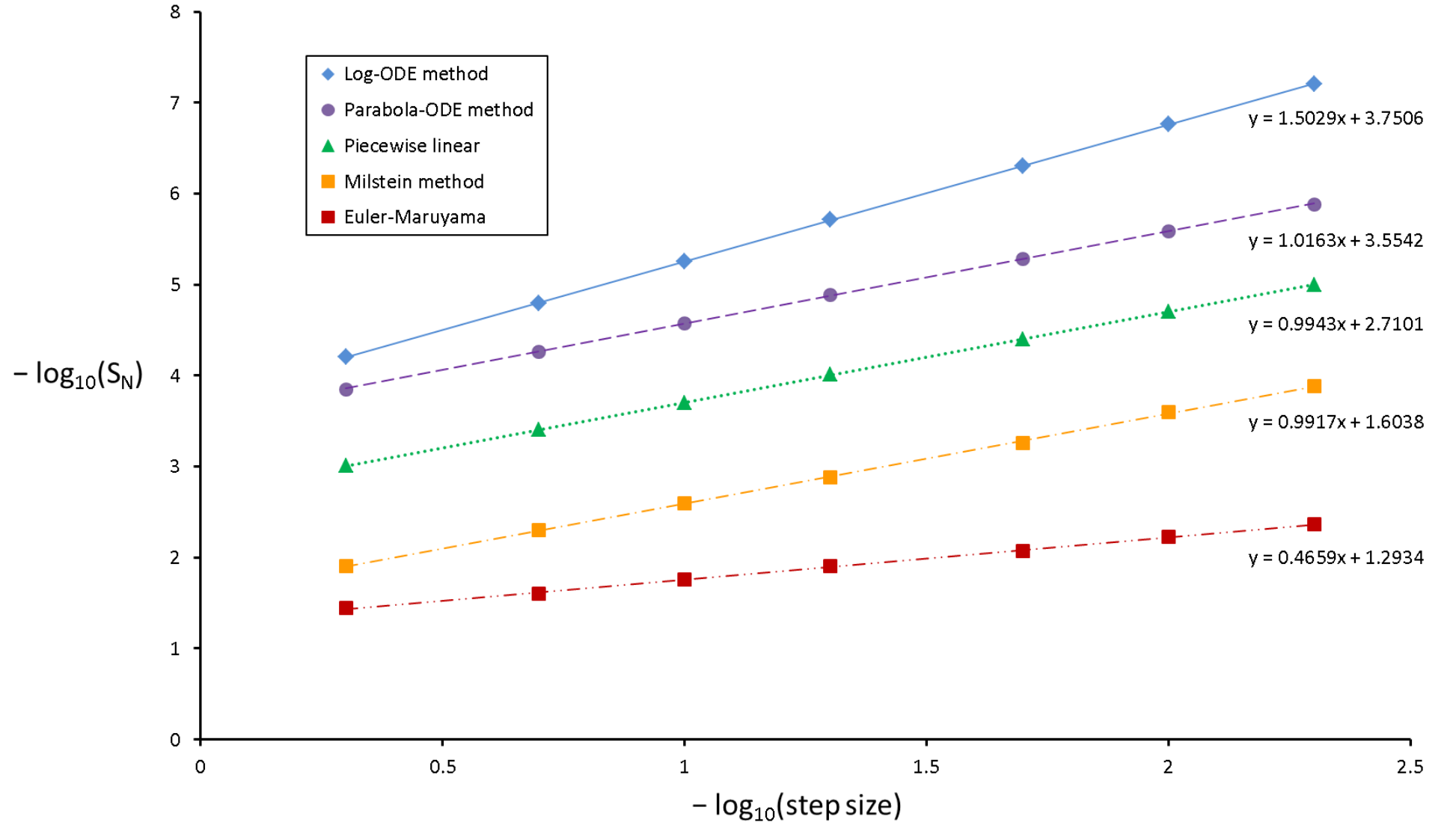}
\caption{$S_{N}$ computed with 100,000 sample paths as a function of step size $h = \frac{T}{N}$.}\label{strongconvergediagram}
\end{figure}\smallbreak\noindent
From the above graph we see that the log-ODE method is by far the most accurate.
This is epitomized by the fact that the numerical error produced by $100$ steps of the
log-ODE method is comparable to the error of the parabola method with $1000$ steps.
In addition, whilst there are three methods that share the same order of convergence
it is evident there are magnitudes of difference between their respective accuracies. 
For example, the parabola method is seven times more accurate than piecewise linear.
As one might expect, the Euler-Maruyama and Milstein schemes both perform poorly.\medbreak\noindent
Nevertheless, in order to truly measure the performance of these numerical methods,
we should consider the computational costs required for achieving a specified accuracy.\vspace{-1mm}
\begin{table}[tbhp]
\caption{\hspace{7mm}Estimated simulation times for computing 100,000 sample paths that achieve a\newline
\centerline{given accuracy using a single-threaded C++ program on a desktop computer.}}
\begin{center}
\begin{tabular}{cccccc}
  \cline{2-6}
  \multicolumn{1}{c}{} & \multicolumn{1}{|c|}{} & \multicolumn{1}{c|}{} & \multicolumn{1}{c|}{} & \multicolumn{1}{c|}{} & \multicolumn{1}{c|}{}  \\ [-1em]
  \multicolumn{1}{c}{} & \multicolumn{1}{|c|}{Log-ODE} & \multicolumn{1}{c|}{Parabola} &
  \multicolumn{1}{c|}{\hspace{0.5mm}Linear\hspace*{0.5mm}} & \multicolumn{1}{c|}{Milstein} & \multicolumn{1}{c|}{Euler} \\ \hline
  \multicolumn{1}{|c}{} & \multicolumn{1}{|c|}{} & \multicolumn{1}{c|}{} & \multicolumn{1}{c|}{}  & 
  \multicolumn{1}{c|}{} & \multicolumn{1}{c|}{} \\ [-1em]
  \multicolumn{1}{|c}{Estimated time to achieve} & \multicolumn{1}{|c|}{0.179}  &  
  \multicolumn{1}{c|}{0.405} & \multicolumn{1}{c|}{1.47} & \multicolumn{1}{c|}{15.4} & \multicolumn{1}{c|}{0.437} \\ [0.1em]
  \multicolumn{1}{|c}{an accuracy of $S_{N} = 10^{-4}$} & \multicolumn{1}{|c|}{(s)} & \multicolumn{1}{c|}{(s)} & 
  \multicolumn{1}{c|}{(s)}  & \multicolumn{1}{c|}{(s)} &   \multicolumn{1}{c|}{(days)} \\
  \multicolumn{1}{|c}{} & \multicolumn{1}{|c|}{}  &  
  \multicolumn{1}{c|}{} & \multicolumn{1}{c|}{} & \multicolumn{1}{c|}{} & \multicolumn{1}{c|}{} \\ [-1.1em] \hline
   \multicolumn{1}{|c}{} & \multicolumn{1}{|c|}{} & \multicolumn{1}{c|}{} & \multicolumn{1}{c|}{}  & 
  \multicolumn{1}{c|}{} & \multicolumn{1}{c|}{} \\ [-1em]
  \multicolumn{1}{|c}{Estimated time to achieve} & \multicolumn{1}{|c|}{0.827}  &  
  \multicolumn{1}{c|}{3.90} & \multicolumn{1}{c|}{14.9} & \multicolumn{1}{c|}{157} & \multicolumn{1}{c|}{61.2} \\ [0.1em]
  \multicolumn{1}{|c}{an accuracy of $S_{N} = 10^{-5}$} & \multicolumn{1}{|c|}{(s)} & \multicolumn{1}{c|}{(s)} & 
  \multicolumn{1}{c|}{(s)}  & \multicolumn{1}{c|}{(s)} &   \multicolumn{1}{c|}{(days)}\\ [0.1em] \hline
\end{tabular}
\end{center}\medbreak\medbreak
The above times are estimated by the graph shown in Fig \ref{strongconvergediagram} and the following table:
\end{table}
\begin{table}[tbhp]
\caption{\hspace{8mm}Simulation times for computing 100,000 sample paths with $100$ steps per path\newline
\centerline{using a single-threaded C++ program on a desktop computer.}}\label{simtimes}
\begin{center}
\begin{tabular}{cccccc}
  \cline{2-6}
  \multicolumn{1}{c}{} & \multicolumn{1}{|c|}{} & \multicolumn{1}{c|}{} & \multicolumn{1}{c|}{} & \multicolumn{1}{c|}{} & \multicolumn{1}{c|}{} \\ [-1em]
  \multicolumn{1}{c}{} & \multicolumn{1}{|c|}{Log-ODE} & \multicolumn{1}{c|}{Parabola} &
  \multicolumn{1}{c|}{\hspace{0.5mm}Linear\hspace*{0.5mm}} &  \multicolumn{1}{c|}{Milstein} & \multicolumn{1}{c|}{\hspace{0.5mm}Euler\hspace*{0.5mm}} \\ \hline
  \multicolumn{1}{|c}{} & \multicolumn{1}{|c|}{} & \multicolumn{1}{c|}{}  & 
  \multicolumn{1}{c|}{} &  \multicolumn{1}{c|}{} & \multicolumn{1}{c|}{} \\ [-1em]
  \multicolumn{1}{|c}{Computation time (s) } & \multicolumn{1}{|c|}{2.44}  &  
  \multicolumn{1}{c|}{2.95} & \multicolumn{1}{c|}{1.48} & 
  \multicolumn{1}{c|}{1.18} & \multicolumn{1}{c|}{1.17} \\ [-1.1em]
  \multicolumn{1}{|c}{} & \multicolumn{1}{|c|}{} & \multicolumn{1}{c|}{} & 
  \multicolumn{1}{c|}{} & \multicolumn{1}{c|}{}  & \multicolumn{1}{c|}{} \\  \hline
\end{tabular}
\end{center}\medbreak\medbreak
Finally, we will investigate the rates of weak convergence for these numerical methods.
\end{table}
\begin{figure}[h]\label{weakconvergediagram}
\centering
\hspace*{-0.5mm}\includegraphics[width=1\textwidth]{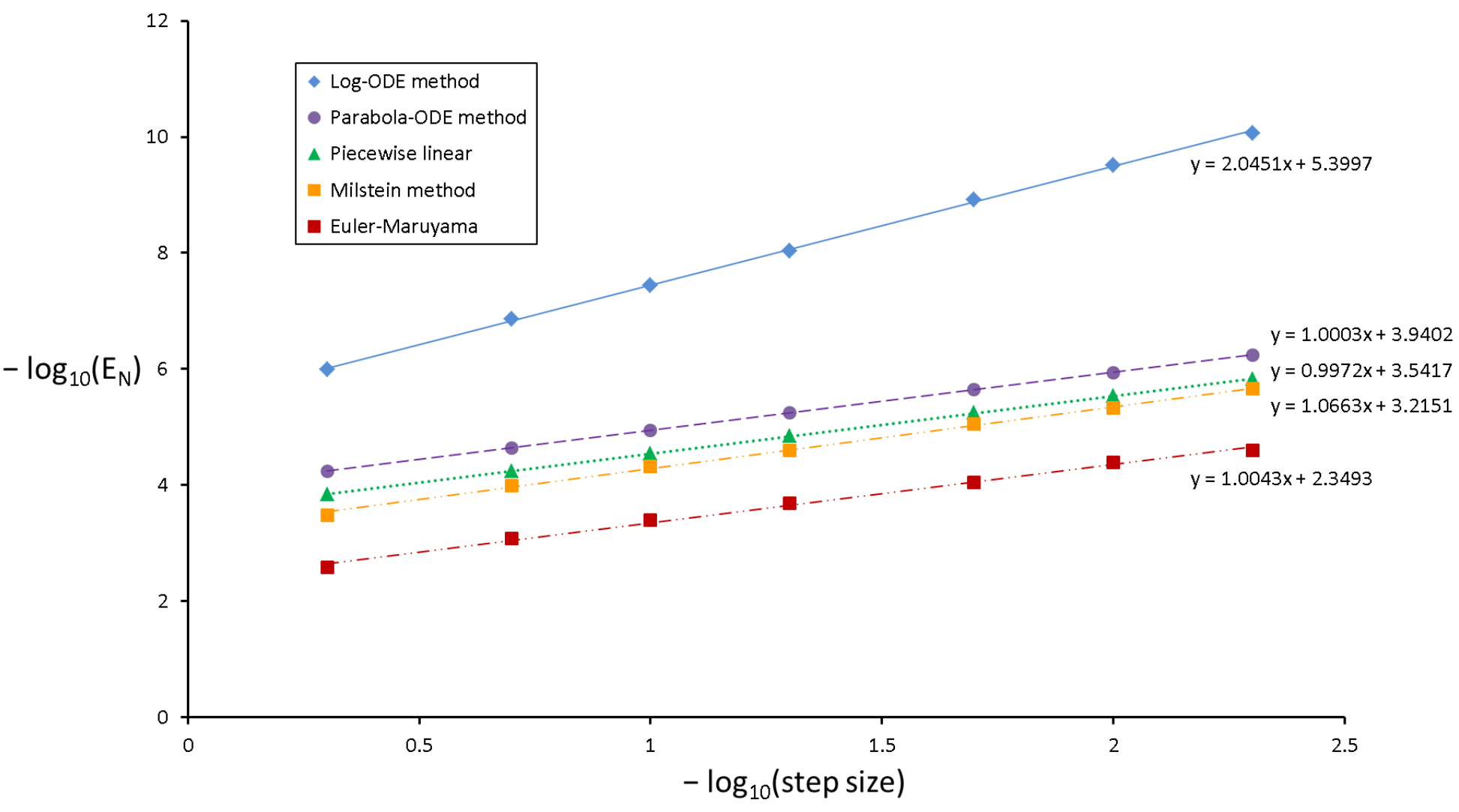}\vspace{-1.5mm}
\caption{$E_{N}$ computed with 500,000 sample paths as a function of step size $h = \frac{T}{N}$.}
\end{figure}\newpage\noindent
The above graph demonstrates that the log-ODE method is especially well-suited for 
weak approximation as it achieves a second order convergence rate in this example.
Surprisingly, the middle three methods exhibit almost identical convergence profiles.
As before, we can estimate the computational time needed to achieve given accuracies.\vspace{-2mm}
\begin{table}[tbhp]
\caption{\hspace{7mm}Estimated simulation times for computing 100,000 sample paths that achieve a\newline
\centerline{given accuracy using a single-threaded C++ program on a desktop computer.}}
\begin{center}
\begin{tabular}{cccccc}
  \cline{2-6}
  \multicolumn{1}{c}{} & \multicolumn{1}{|c|}{} & \multicolumn{1}{c|}{} & \multicolumn{1}{c|}{} & \multicolumn{1}{c|}{} & \multicolumn{1}{c|}{}  \\ [-1em]
  \multicolumn{1}{c}{} & \multicolumn{1}{|c|}{Log-ODE} & \multicolumn{1}{c|}{Parabola} &
  \multicolumn{1}{c|}{\hspace{0.5mm}Linear\hspace*{0.5mm}} & \multicolumn{1}{c|}{Milstein} & \multicolumn{1}{c|}{Euler} \\ \hline
  \multicolumn{1}{|c}{} & \multicolumn{1}{|c|}{} & \multicolumn{1}{c|}{} & \multicolumn{1}{c|}{}  & 
  \multicolumn{1}{c|}{} & \multicolumn{1}{c|}{} \\ [-1em]
  \multicolumn{1}{|c}{Estimated time to achieve} & \multicolumn{1}{|c|}{$<$ 0.240}  &  
  \multicolumn{1}{c|}{1.69} & \multicolumn{1}{c|}{2.15} & \multicolumn{1}{c|}{2.78} & \multicolumn{1}{c|}{25.5} \\ [0.1em]
  \multicolumn{1}{|c}{an accuracy of $E_{N} = 10^{-5}$} & \multicolumn{1}{|c|}{(s)} & \multicolumn{1}{c|}{(s)} & 
  \multicolumn{1}{c|}{(s)}  & \multicolumn{1}{c|}{(s)} &   \multicolumn{1}{c|}{(s)} \\
  \multicolumn{1}{|c}{} & \multicolumn{1}{|c|}{}  &  
  \multicolumn{1}{c|}{} & \multicolumn{1}{c|}{} & \multicolumn{1}{c|}{} & \multicolumn{1}{c|}{} \\ [-1.1em] \hline
   \multicolumn{1}{|c}{} & \multicolumn{1}{|c|}{} & \multicolumn{1}{c|}{} & \multicolumn{1}{c|}{}  & 
  \multicolumn{1}{c|}{} & \multicolumn{1}{c|}{} \\ [-1em]
  \multicolumn{1}{|c}{Estimated time to achieve} & \multicolumn{1}{|c|}{0.240}  &  
  \multicolumn{1}{c|}{16.9} & \multicolumn{1}{c|}{21.6} & \multicolumn{1}{c|}{24.1} & \multicolumn{1}{c|}{252} \\ [0.1em]
  \multicolumn{1}{|c}{an accuracy of $E_{N} = 10^{-6}$} & \multicolumn{1}{|c|}{(s)} & \multicolumn{1}{c|}{(s)} & 
  \multicolumn{1}{c|}{(s)}  & \multicolumn{1}{c|}{(s)} &   \multicolumn{1}{c|}{(s)}\\ [0.1em] \hline
\end{tabular}
\end{center}
\end{table}\smallbreak\vspace{-2.5mm}\noindent
We expect the log-ODE and parabola methods to have about twice the computational
cost as the other methods because each step requires generating two random variables.
Table \ref{simtimes} confirms this and thus sampling may be a bottleneck for these methods.
So overall, the numerical evidence supports our claim that the high order log-ODE
method is currently a state-of-the-art method for the pathwise discretization of IGBM.

\section{Conclusion} There are primarily three new results established in this paper:\medbreak
\begin{itemize}
\item \textbf{An efficient strong polynomial approximation of Brownian motion}\smallbreak
The main result allows one to construct a ``smoother'' Brownian motion as a
finite sum of $(\text{-}1,\text{-}1)$-Jacobi polynomials with independent Gaussian weights.
Moreover, it was shown that the approximation is optimal in a weighted $L^{2}(\mathbb{P})$
sense and the surrounding noise is an independent centered Gaussian process.\medbreak
\item \textbf{Unbiased approximation of third order Brownian iterated integrals}\smallbreak
Iterated integrals of Brownian motion and time are important objects in the
study of SDEs as they appear naturally within stochastic Taylor expansions.
We have derived the $L^{2}(\mathbb{P})$-optimal estimator for a class of such integrals that is
measurable with respect to the path's increment and space-time L\'{e}vy area.\medbreak
\item \textbf{Simulation of Inhomogeneous Geometric Brownian Motion (IGBM)}\smallbreak
IGBM is a mean-reverting short rate model used in mathematical finance and
also one of the simplest SDEs that has no known method of exact simulation.\\
By incorporating the new iterated integral estimator into the log-ODE method
we have developed a high order state-of-the-art numerical method for IGBM.\bigbreak
\end{itemize}
Furthermore, the results of this paper naturally lead to the following open questions:\medbreak
\begin{itemize}
\item Which weight functions give ``\hspace{0.125mm}explicit eigenfunctions'' for Brownian motion?
      (For example, we could try $w(x) = x$ or $w(x) = \frac{1}{x}$ with $K_{W}(s,t) = \min(s,t)$)\medbreak
\item Is it possible to generalize the main theorem to fractional Brownian motion?\medbreak
\item What are the most efficient Runge-Kutta methods for general one-dimensional
      SDEs that correctly use the  new estimator for third order iterated integrals?\medbreak
\item Is this polynomial expansion optimal for approximating L\'{e}vy area? (see \cite{Dickinson})\medbreak
\item Which conditional moments can be computed for a given stochastic integral?\medbreak
\item How might we construct a piecewise linear path $\gamma$ with the below properties?
\begin{align*}
&1.\,\,\,\, \gamma_{s} = W_{s}\hspace{0.25mm},\hspace{1.75mm} \gamma_{t} = W_{t}\hspace{0.25mm}.\\[5pt]
&2.\,\,\, \int_{s}^{t}\gamma_{s,u}\,du = \int_{s}^{t}W_{s,u}\,du\hspace{0.25mm}.\\[2pt]
&3.\,\,\, \int_{s}^{t}\gamma_{s,u}^{2}\,du = \mathbb{E}\hspace{-0.25mm}\left[\hspace{0.25mm}\int_{s}^{t}W_{s,u}^{2}\,du\,\Big|\,\cdots\right]\hspace{-0.5mm}.
\end{align*}
\item Would this method of construction lead to effective cubature paths? (see \cite{Cubature})\medbreak
Given such a path, we can approximate (\ref{ogsde}) with a ``\hspace{0.125mm}piecewise linear'' ODE.
\begin{align}\label{piecelinode}
\frac{dY}{du} = f_{0}(\hspace{0.125mm}Y) + f_{1}(\hspace{0.125mm}Y)\hspace{0.5mm}\frac{d\gamma}{du}\hspace{0.5mm}.
\end{align}
(Along each piece of $\gamma$, we would discretize (\ref{piecelinode}) using an appropriate solver)\medbreak
\item How effective is the above piecewise linear ODE method for simulating SDEs?\medbreak
\item Can we extend the approximations given in this paper to the SPDE setting?
\end{itemize}

\newpage\bigbreak

\bibliographystyle{amsplain}

\end{document}